\newtheorem{theorem}{Theorem}[section]
\newtheorem{definition}{Definition}[section]
\newtheorem{proposition}{Proposition}[section]
\newtheorem{lemma}{Lemma}[section]
\newtheorem{corollary}{Corollary}[section]
\numberwithin{equation}{section}
\newcommand{\be}{\mathbf{e}}
\newcommand{\diag}{\operatorname{diag}}
\newcommand{\GL}{\operatorname{GL}}
\newcommand{\supp}{\operatorname{supp}}
\newcommand{\cov}{\operatorname{cov}}
\newcommand{\bq}{\mathbf{q}}
\newcommand{\bx}{\mathbf{x}}
\newcommand{\cL}{\mathcal{L}}
\newcommand{\bw}{\mathbf{w}}
\newcommand{\f}{\mathbf{f}}
\newcommand{\rk}{\operatorname{rank}}
\newcommand{\ba}{\mathbf{a}}
\newcommand{\Id}{\operatorname{Id}}
\newcommand{\bc}{\mathbf{c}}
\newcommand{\by}{\mathbf{y}}
\newcommand{\al}{\alpha}
\newcommand{\bg}{\mathbf{g}}
\newcommand{\Q}{\mathbb{Q}}
\newcommand{\Z}{\mathbb{Z}}
\newcommand{\R}{\mathbb{R}}
\newcommand{\N}{\mathbb{N}}
\newcommand{\inv}{^{\text{-}1}}
\begin{document}

\title[$p$-adic Diophantine approximation]{Diophantine Inheritance for $p$-adic measures}

\author{Shreyasi Datta}
\address{\textbf{Shreyasi Datta} \\
School of Mathematics,
Tata Institute of Fundamental Research, Mumbai, India 400005}
\email{shreya@math.tifr.res.in}

\author{Anish Ghosh}
\address{\textbf{Anish Ghosh} \\
School of Mathematics,
Tata Institute of Fundamental Research, Mumbai, India 400005}
\email{ghosh@math.tifr.res.in}
\date{}

\thanks{A.\ G.\ gratefully acknowledges support from a grant from the Indo-French Centre for the Promotion of Advanced Research; a Department of Science and Technology, Government of India Swarnajayanti fellowship and a MATRICS grant from the Science and Engineering Research Board.\\ 
Mathematics Subject Classification (2010). Primary 11J83; Secondary 11J54, 11J61, 37A45.}

\date{}

% Abstract
\begin{abstract}
In this paper we prove complete $p$-adic analogues of Kleinbock's theorems \cite{Kleinbock-extremal, Kleinbock-exponent} on inheritance of Diophantine exponents for affine subspaces. In particular, we answer in the affirmative (and in a stronger form), a conjecture of Kleinbock and Tomanov \cite{KT}, as well as a question of Kleinbock \cite{Kleinbock-exponent}. Our main innovation is the introduction of a new $p$-adic Diophantine exponent which is better suited to homogeneous dynamics, and which we show to be closely related to the exponent considered by Kleinbock and Tomanov. 
\end{abstract}

\maketitle

\section{Introduction}\label{sec:intro}
This paper is concerned with the study of $p$-adic Diophantine approximation on manifolds. We briefly recall the setting and basic results from the paper \cite{KT} of Kleinbock and Tomanov.\\

For $\bq = (q_1, \dots, q_n) \in \Z^n$ and $q_0 \in \Z$, set  $\tilde{\bq} := (q_0, q_1, \dots, q_n)$. Define the Diophantine exponent $w(\by)$ of $\by \in \Q_p^{n}$ to be the supremum of $v > 0$ such that there are infinitely many $\tilde{\bq} \in \Z^{n+1}$ such that
\begin{equation}\label{def:exp}
|q_0 + \bq\cdot \by|_p  \leq \|\tilde{\bq}\|_\infty^{-v}.
\end{equation}
In view of Dirichlet's theorem (\cite{KT} \S 11.2), $w(\by) \geq n+1$ for every $\by \in \Q_p^{n},$ with equality for Haar almost every $\by$ by the Borel-Cantelli lemma. 
%we briefly recall the setting from the paper \cite{KT} of Kleinbock and Tomanov. Given a finite set of primes $S$ of cardinality $l$,  set $\Q_S := \prod_{\nu \in S}\Q_\nu$. Given $\by \in \Q_S$ and $\bq = (q_1, \dots, q_n) \in \Z^n$ we set  $\tilde{q} = (q_0, q_1, \dots, q_n)$. By Dirichlet's approximation theorem (see \cite{KT}), for every $\by \in \Q^{n}_S$ there are infinitely many solutions $\tilde{q} = (q_0, q_1, \dots, q_n) \in \Z^{n+1}$ to
%\begin{equation} 
%|q_0 + \bq\cdot \by|^{l}  \leq \left\{ 
%\begin{array}{rl}
%\|\bq\|^{-(n)} & \text{if } \infty \in S\\ 
%\\
%\|\tilde{\bq}\|^{-(n+1)} & \text{ if } \infty \notin S.
%\end{array} \right.
%\end{equation}

A vector $\by \in \Q^{n}_p$ is called very well approximable (VWA) if $w(\by) > n + 1$. The set of very well approximable vectors has zero Haar measure. Diophantine approximation on manifolds or ``Diophantine approximation with dependent quantities" is concerned with the question of \emph{inheritance} of Diophantine properties which are generic with respect to Lebesgue measure in $\R^n$ or $\Q_{p}^{n}$ with respect to Lebesgue measure, by appropriate proper subsets. The theory began with a conjecture of Mahler, that almost every point on the Veronese curve 
$$ (x, x^2, \dots, x^n) \subset \R^n $$
is not very well approximable. Mahler made his conjecture in the context of a classification of numbers in terms of their approximation properties. We refer the reader to Bugeaud's book \cite{Bugeaud-book} for details about developments in this area. Mahler's conjecture was settled by Sprind\v{z}huk, in the real, $p$-adic and positive characteristic (i.e. function fields in one variable over a finite field) contexts. It was conjectured by Sprind\v{z}uk in 1980 \cite{Sp3, Sp2} and proved by Kleinbock and Margulis \cite{KM} that real analytic manifolds not contained in any proper affine subspace of $\R^n$ are extremal. They introduced methods from the ergodic theory of group actions into the subject and \cite{KM} has become a cornerstone of the subject. A further breakthrough was achieved by Kleinbock, Lindenstrauss and Weiss in \cite{KLW}, where a wide class of measures, including fractal measures as well as pushforwards of Lebesgue measure by nondegenerate maps which were previously introduced by Kleinbock and Margulis (the definition follows in the next paragraph) were studied in a unified manner. Subsequently, Kleinbock and Tomanov \cite{KT} proved an $S$-adic version of Sprind\v{z}huk's conjectures and in \cite{Gpos}, the second named author proved the positive characteristic version of Sprind\v{z}huk's conjecture. In two important papers, \cite{Kleinbock-extremal, Kleinbock-exponent}, D. Kleinbock systematically explored Diophantine approximation on affine subspaces and their nondegenerate submanifolds and the second named author proved Khintchine type theorems both convergence and divergence \cite{G1, G-thesis, G-div, G-mult, G-monat}. Inhomogeneous and quantitative versions of Khintchine's theorem for affine subspaces have been proved in \cite{BGGV, GG}. We refer the reader to the survey \cite{G-handbook} of the second named author for more details on the problem of Diophantine approximation on affine subspaces.\\

The subject of $p$-adic Diophantine approximation started with the work of E. Lutz \cite{Lutz} and has seen numerous advances in different contexts over the years including early work of Mahler \cite{Mah2, Mah}. We refer the reader to \cite{Bugeaud-book} for a comprehensive reference, to \cite{KT} and the references therein for work relating to $p$-adic metric Diophantine approximation on manifolds, and to \cite{BK, BBK, BBDO, DG} for recent results.\\

In this paper, we provide a complete $p$-adic analogue of the results of D. Kleinbock  \cite{Kleinbock-exponent} on Diophantine exponents of affine subspaces in two types of approximations, one was considered by Kleinbock and Tomanov in \cite{KT} and the other one is a new type of approximation we introduced here. This answers, in a stronger form, Conjecture IS of Kleinbock and Tomanov \cite{KT} as well as a question of D. Kleinbock \cite{Kleinbock-exponent}.\\

While the methods of the present paper are heavily influenced by the work of D. Kleinbock, providing a complete $p$-adic analogue of his results poses substantial new difficulties. To circumvent these difficulties, one of the key innovations of the present paper is a new $p$-adic Diophantine exponent which measures approximation with respect to $\mathbb{Z}[1/p]$-points.

We now introduce some notation and state the main results in the present paper. 
For a Borel measure $\mu$ on $\Q_p^{n}$, we follow \cite{Kleinbock-exponent} in defining the Diophantine exponent $\omega(\mu)$ of $\mu$ to be 
\begin{equation}
w(\mu) = \sup\{ v~:~ \mu(\{y~|~w(y) > v\})>0\}.
\end{equation}

The exponent only depends on the measure class of $\mu$. Let $\lambda$ denote Haar measure on $\Q^{n}_{p}$ normalized so that $\Z^{n}_p$ has volume $1$, the dimension being clear from the context. If $\mathcal{M} \subset \Q_p^{n}$ is a $d$ dimensional analytic manifold, and $\mu$ is the pushforward of Haar measure on $\Q_p^{d}$ by a map parametrising $\mathcal{M}$, then $\omega(\mathcal{M})$ is defined to be $\omega(\mu)$. Following Kleinbock \cite{Kleinbock-exponent}, we say that a differentiable map $f : U \to \Q^{n}_{p}$, where $U$ is an open subset of $\Q^{d}_{p}$, is nondegenerate in an affine subspace $\mathcal{L}$ of $\Q^{n}_p$ at $x \in U$ if $f(U) \subset \mathcal{L}$ and the span of all the partial derivatives of $f$ at $x$ up to some order coincides with the linear part of $\mathcal{L}$. If $\mathcal{M}$ is a $d$-dimensional submanifold of $\mathcal{L}$, we will say that $\mathcal{M}$ is nondegenerate in $\mathcal{L}$ at $y \in \mathcal{M}$ if there exists a diffeomorphism $f$ between an open subset $U$ of $\Q^{d}_{p}$ and a neighbourhood of $y$ in $\mathcal{M}$ is nondegenerate in $\mathcal{L}$ at $f^{-1}(y)$. Finally, we will say that $f:U\to \mathcal{L}$ (resp., $\mathcal{M} \subset \mathcal{L}$) is nondegenerate in $\mathcal{L}$ if it is nondegenerate in $\mathcal{L}$ at $\lambda$-a.e. point of $U$ (resp., of $\mathcal{M}$, in the sense of the smooth measure class on $\mathcal{M}$). Here is a special case of one of our main results.

\begin{theorem}\label{main:sp}
Let $\mathcal{L}$ be an affine subspace of $\Q_p^n$, and let $\mathcal{M}$ be a submanifold of $\mathcal{L}$ which is nondegenerate in $\mathcal{L}$. Then 
\begin{equation}
w(\mathcal{M})=w(\mathcal{L})= \inf\{ w(\by)~|~\by\in\mathcal{L}\}= \inf\{w(\by) ~|~ \by \in \mathcal{M}\}.
\end{equation}
\end{theorem}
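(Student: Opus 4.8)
The plan is to establish a chain of inequalities between the four quantities, most of which are either trivial or reduce to known statements. Since $\mathcal{M} \subset \mathcal{L}$, every $\by \in \mathcal{M}$ lies in $\mathcal{L}$, so $\inf\{w(\by) \mid \by \in \mathcal{L}\} \leq \inf\{w(\by) \mid \by \in \mathcal{M}\}$. Also, by definition of the Diophantine exponent of a measure, $w(\mathcal{M}) \geq \inf\{w(\by) \mid \by \in \mathcal{M}\}$ trivially (the exponent of a point is the exponent of the atomic measure at that point, and any measure supported on $\mathcal{M}$ sees points of exponent at least the infimum). Similarly one wants $w(\mathcal{L}) = \inf\{w(\by) \mid \by \in \mathcal{L}\}$, which should already be available from the affine-subspace theory: one shows that for an affine subspace, the generic exponent on $\mathcal{L}$ (with respect to Lebesgue measure on $\mathcal{L}$) equals the minimal exponent attained on $\mathcal{L}$. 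So the crux is the single nontrivial inequality $w(\mathcal{M}) \leq w(\mathcal{L})$, i.e. that passing to a nondegenerate submanifold does not increase the Diophantine exponent beyond that of the ambient affine subspace.

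To prove $w(\mathcal{M}) \leq w(\mathcal{L})$, I would reduce to a local statement: by countable additivity of measures and a covering argument, it suffices to work in a coordinate chart, i.e. to bound $w(\mu)$ where $\mu = f_* \lambda$ for a map $f : U \to \mathcal{L}$, $U \subset \Q_p^d$ open, that is nondegenerate in $\mathcal{L}$. The strategy here is the dynamical one pioneered by Kleinbock–Margulis and adapted to affine subspaces by Kleinbock: translate the Diophantine condition $|q_0 + \bq \cdot f(x)|_p \leq \|\tilde{\bq}\|^{-v}$ into a statement about a one-parameter (here, a $\Z[1/p]$-indexed) diagonal flow on the space of lattices in $\Q_p^{n+1}$ (or the relevant $S$-arithmetic space) pushing the orbit of the "unipotent-like" family $u_{f(x)}$ into shrinking neighborhoods of the cusp, and then to invoke a quantitative nondivergence estimate ("$(C,\alpha)$-good" functions / friendliness of the measure $f_*\lambda$) together with the structure of $\mathcal{L}$. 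The key point is that nondegeneracy of $f$ in $\mathcal{L}$ means the coordinate functions of $f$, together with $1$, span the same space of functions (modulo affine functions in the ambient coordinates) as do the affine functions defining $\mathcal{L}$, so the relevant sublevel sets behave, up to the $(C,\alpha)$-good machinery, exactly as they would for Lebesgue measure on $\mathcal{L}$ itself; hence the exponent cannot exceed $w(\mathcal{L})$.

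The main obstacle — and, I expect, the reason the paper introduces a new $p$-adic exponent measuring approximation by $\Z[1/p]$-points — is that in the $p$-adic/$S$-arithmetic setting the natural diagonal flow is not a genuine $\R$-flow, and the integer points $\tilde{\bq} \in \Z^{n+1}$ do not interact with the $p$-adic places the way real approximation does; to run the nondivergence argument cleanly one wants to replace $\Z$-points by $\Z[1/p]$-points, which requires showing that the resulting $\Z[1/p]$-exponent is comparable to $w$ (this is presumably the content of one of the earlier results, relating the new exponent to the Kleinbock–Tomanov exponent). Granting that comparison, the remaining work is: (i) verifying that $f_*\lambda$ is friendly / absolutely friendly, or at least that the coordinate functions are $(C,\alpha)$-good, which for analytic nondegenerate maps over $\Q_p$ is standard; (ii) the linear-algebraic bookkeeping identifying which faces of the flag of subspaces of $\Q_p^{n+1}$ the affine structure of $\mathcal{L}$ forces the orbit to approach, exactly as in Kleinbock's real argument \cite{Kleinbock-exponent}; and (iii) assembling the local bounds into the global equality. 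None of (i)–(iii) is conceptually new once the right $p$-adic exponent is in place; the innovation is precisely in choosing that exponent so that the real proof transfers.
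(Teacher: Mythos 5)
Your overall route---translate the approximation condition into dynamics on the space of $\Z[1/p]$-lattices, use $(C,\alpha)$-goodness and quantitative nondivergence, and exploit that nondegeneracy in $\mathcal{L}$ makes the relevant functions span the same space as the affine functions parametrizing $\mathcal{L}$---is the same as the paper's, and your identification of the role of the $\Z[1/p]$-exponent is accurate (the comparison you need is Proposition \ref{same}, $w_p(\by)=w(\by)+1$). But two essential ingredients are missing, and they are precisely where the work lies. First, you dispose of the middle equality $w(\mathcal{L})=\inf\{w(\by)\,:\,\by\in\mathcal{L}\}$ by saying it ``should already be available from the affine-subspace theory.'' It is not: in the $p$-adic setting this equality is one of the main results being proved here (the paper explicitly flags it as the non-obvious part), and Kleinbock's real-variable theorem cannot simply be quoted for $\Q_p^n$. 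Treating it as known makes your chain of inequalities circular.

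Second, your sketch contains only the ``sufficiency'' half of the correspondence. Quantitative nondivergence (Proposition \ref{prop1}) yields $w_p(\f_*\mu)\le v$ only under the hypothesis that $\sup_{x\in B\cap\supp\mu}\cov\big(g_tu_{\f(x)}\Delta\big)\ge p^{-(\rk\Delta)dt}$ for all primitive $\Z[1/p]$-submodules $\Delta$, a condition which, after the reduction $\tilde{\f}=\tilde{\bg}R$, depends only on the parametrizing matrix $R$ of $\mathcal{L}$. To conclude either $w(\mathcal{M})\le w(\mathcal{L})$ or $w(\mathcal{L})\le\inf\{w(\by)\}$ you must know that this condition holds whenever $v$ exceeds $w(\mathcal{L})$ (respectively, exceeds the exponent of a single point of $\mathcal{L}$), and that requires the converse implication: if the covolume condition fails, then \emph{every} point of $\f(B\cap\supp\mu)$ lies in $\mathcal{W}^{p}_u$ for some $u>v$. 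This is Lemma \ref{necessary}, proved via the $S$-arithmetic Minkowski lemma, and it is what upgrades the Dani-type correspondence to the ``if and only if'' of Theorem \ref{main theorem}, letting one and the same matrix condition control $\mathcal{L}$, $\mathcal{M}$, and individual points simultaneously. Without it your argument delivers only an upper bound conditional on a hypothesis you have no means to verify, so neither your ``crux'' inequality nor the infimum equalities actually follow from what you have written.
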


\noindent In particular, this implies that if $\cL$ is an extremal subspace of $\Q_p^n$, and $\mathcal{M}$ is nondegenerate in $\cL$, then $\mathcal{M}$ is extremal. As noted by Kleinbock in the context of real Diophantine approximation, the middle equality is non-obvious and non-trivial. The difficulty of proving it persists in the $p$-adic context and is in fact compounded by the difficulty, in the $p$-adic setting, of translating the Diophantine problem to dynamics and back. To overcome this challenge, we introduce the new idea of $\Z[1/p]$-exponents. We define these exponents $w_p(\by)$ and $w_{p}(A)$ in the next sections; the point is that these exponents seem to be better suited to techniques from homogeneous dynamics and in particular allow us to prove an ``if and only if" Dani-type correspondence in the $p$-adic setting, which does not seem to be readily achieved whilst dealing with approximation of $p$-adic vectors by rational numbers. We suspect these exponents will find further use. We then explain how the $\Z[1/p]$-exponent is related to the usual exponent. We  also define $p$-adic exponents $\omega_p(\mu)$ of a borel measure $\mu$ on $\Q_p^n$ to be 
\begin{equation}
w_p(\mu) := \sup\{ v~:~ \mu(\{y~|~w_p(y) > v\})>0\}
\end{equation}\\ and prove the following theorem.
\begin{theorem}\label{main:sp2}
	Let $\mathcal{L}$ be an affine subspace of $\Q_p^n$, and let $\mathcal{M}$ be a submanifold of $\mathcal{L}$ which is nondegenerate in $\mathcal{L}$. Then 
	\begin{equation}
	w_p(\mathcal{M})=w_p(\mathcal{L})= \inf\{ w_p(\by)~|~\by\in\mathcal{L}\}= \inf\{w_p(\by) ~|~ \by \in \mathcal{M}\}.
	\end{equation}
\end{theorem}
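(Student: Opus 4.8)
The plan is to collapse the fourfold equality onto a single arithmetically defined quantity $\nu=\nu(\mathcal{L})$ attached to the affine subspace, by splitting the work into a universal lower bound on $\mathcal{L}$ and a measure-theoretic upper bound on $\mathcal{M}$, and then to dispatch the remaining inequalities formally. For the soft part: since $\mathcal{M}\subseteq\mathcal{L}$ we have $\inf\{w_p(\by):\by\in\mathcal{L}\}\le\inf\{w_p(\by):\by\in\mathcal{M}\}$ for free, and for each of the pushforward measures $\mu$ in play (Haar on $\mathcal{L}$, and the parametrising pushforward on $\mathcal{M}$) one has $w_p(\mu)\ge\inf\{w_p(\by):\by\in\supp\mu\}$, because if every point of $\supp\mu$ has exponent $\ge c$ then for every $v<c$ the set $\{w_p>v\}$ has full $\mu$-measure. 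Consequently it suffices to exhibit a number $\nu$ such that \textbf{(L)} $w_p(\by)\ge\nu$ for \emph{every} $\by\in\mathcal{L}$, and \textbf{(U)} $w_p(\by)\le\nu$ for $\mu$-almost every $\by\in\mathcal{M}$ (where $\mu$ is the defining measure on $\mathcal{M}$). Indeed (L) gives $\inf\{w_p(\by):\by\in\mathcal{L}\}\ge\nu$ and $\inf\{w_p(\by):\by\in\mathcal{M}\}\ge\nu$; (L) together with (U) pins the essential supremum of $w_p$ over $\mathcal{M}$ to $\nu$ exactly, i.e. $w_p(\mathcal{M})=\nu$; applying (U) to an open piece of $\mathcal{L}$ — which is nondegenerate in $\mathcal{L}$ — gives $w_p(\mathcal{L})=\nu$ in the same way; and feeding these back into the soft inequalities forces all four quantities to equal $\nu$. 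Note that the ``middle equality'' highlighted in the introduction is precisely $w_p(\mathcal{L})=\nu=\inf\{w_p(\by):\by\in\mathcal{L}\}$, and it is (U) that carries its weight.

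The lower bound (L) is elementary but not purely formal. After putting $\mathcal{L}$ in a normalised form, the coordinates of any $\by\in\mathcal{L}$ satisfy a fixed system of affine relations whose coefficients record the ``height'' of $\mathcal{L}$; substituting these relations into Dirichlet's theorem applied to the free coordinates produces, uniformly for every $\by\in\mathcal{L}$, infinitely many $\Z[1/p]$-tuples $\tilde{\bq}$ realising a fixed exponent. This fixed exponent is $\nu(\mathcal{L})$, an explicit function of the linear part of $\mathcal{L}$ and of a chosen point of $\mathcal{L}$, read off from this computation; this is the $p$-adic, $\Z[1/p]$-flavoured analogue of the corresponding step in \cite{Kleinbock-exponent}.

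The upper bound (U) is the heart of the matter, and it is here that the $\Z[1/p]$-exponent pays off. First one sets up a Dani-type correspondence: $w_p(\by)>v$ holds if and only if the orbit $g_t\,u_{\by}\,\Z[1/p]^{n+1}$ in the $S$-arithmetic homogeneous space $X=\SL_{n+1}(\R\times\Q_p)/\SL_{n+1}(\Z[1/p])$ — with $u_{\by}$ the unipotent matrix built from $\by$ and $(g_t)$ the relevant diagonal subsemigroup — escapes the $\varepsilon$-thick part of $X$ along a sequence of times at a rate governed by $v$. Because approximation is by $\Z[1/p]$-points and size is measured through the archimedean place, this is a genuine two-way equivalence at the level of lattices, rather than the one-sided statement one gets when approximating $\Q_p$-vectors by rationals; this is exactly the technical gain that motivated the new exponent. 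One then pushes Haar measure on a neighbourhood in $\mathcal{M}$ forward by $\by\mapsto u_{\by}$ and applies the $S$-arithmetic quantitative nondivergence estimate of Kleinbock and Tomanov: analyticity supplies the required $(C,\alpha)$-good property of the coordinate functions, and nondegeneracy of $\mathcal{M}$ in $\mathcal{L}$ — which, after the normalisation used for (L), becomes a nonplanarity condition relative to the coordinate flag determined by $\mathcal{L}$ — ensures that the only obstruction to nondivergence is the sub-flag coming from $\mathcal{L}$ itself. The surviving escape rate is exactly $\nu(\mathcal{L})$, which yields $w_p(\by)\le\nu(\mathcal{L})$ for $\mu$-almost every $\by$.

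The main obstacle is twofold. Dynamically, one must arrange the quantitative nondivergence input so that the escape rate which survives is matched to the arithmetic quantity $\nu(\mathcal{L})$ and to nothing coarser; this requires the careful relative (to the flag of $\mathcal{L}$) version of the nonplanarity hypothesis, as in \cite{Kleinbock-exponent}, now complicated by having to track valuations at $\infty$ and at $p$ simultaneously in the $S$-arithmetic picture. Diophantinely, one needs the correspondence to be sharp in \emph{both} directions, and securing this sharpness in the $p$-adic setting is exactly why the $\Z[1/p]$-exponent is introduced; the passage back from $w_p$ to the classical exponent $w$ is a separate matter, handled elsewhere and not needed here.
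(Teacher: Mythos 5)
Your global strategy---soft inequalities plus a universal lower bound (L) valid at every point of $\mathcal{L}$ and an almost-everywhere upper bound (U) on $\mathcal{M}$, both at a common value $\nu(\mathcal{L})$---is sound in outline, and your route to (U) (the $\Z[1/p]$ Dani-type correspondence followed by $S$-arithmetic quantitative nondivergence for good, nonplanar data) is essentially the paper's. The genuine gap is in (L). You propose to obtain, for \emph{every} $\by\in\mathcal{L}$, the exponent $\nu(\mathcal{L})$ by substituting the affine relations defining $\mathcal{L}$ into Dirichlet's theorem in the free coordinates. Any such argument only sees rank-one approximation of the parametrizing matrix $A$: it yields $w_p(\by)\geq\max(n,w_p(A))$ for all $\by\in\mathcal{L}$ (this is precisely the content of the lemma following Theorem \ref{hyperplane}), and $\max(n,w_p(A))$ is in general \emph{strictly smaller} than the almost-everywhere value produced by nondivergence. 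Indeed $w_p(\mathcal{L})=\max(n,w^p_1(A),\dots,w^p_{n-s}(A))$ by Corollary \ref{max w_j}, where the higher exponents $w^p_j(A)$, $j\geq 2$, of Definition \ref{def:higher} are governed by rank-$j$ submodules (exterior powers) and can exceed $w^p_1(A)=w_p(A)$; the identity $w_p(\mathcal{L})=\max(n,w_p(A))$ holds only in special cases such as hyperplanes or matrices whose rows or columns are rationally proportional (Theorems \ref{hyperplane} and \ref{columns}). So your (L) and (U) need not meet at a common $\nu$, and the fourfold equality does not follow. A posteriori (L) does hold with $\nu=w_p(\mathcal{L})$, but that is a consequence of the theorem, not something the Dirichlet-plus-substitution computation can deliver.

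What closes the gap in the paper is that the universal lower bound is proved dynamically, by geometry of numbers at \emph{all} ranks rather than by Dirichlet at rank one: if the covolume condition (\ref{A2}) fails at level $v$ for some rank $j$, the $S$-arithmetic Minkowski theorem produces, at every point $x\in B\cap\supp\mu$ simultaneously, $\Z[1/p]$-solutions witnessing $\f(x)\in\mathcal{W}^{p}_u$ for some $u>v$ (Lemma \ref{necessary}). Combined with the sufficiency direction (Proposition \ref{prop1}, via Theorem \ref{QND2} and Borel--Cantelli), this gives the equivalence of Theorem \ref{main theorem}, whose conditions (\ref{covolume1}) and (\ref{covolume3}) depend only on $\mathcal{L}$ and not on $(\f,\mu,X)$; specializing to an affine parametrization of $\mathcal{L}$ with Haar measure and taking $v=\inf\{w_p(\by)\mid\by\in\mathcal{L}\}$, so that condition (\ref{c1}) holds automatically, yields the middle equality, and the remaining equalities follow from the soft inequalities exactly as you indicate. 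If you wish to keep your (L)/(U) format, you must define $\nu(\mathcal{L})$ through the full family of conditions (\ref{covolume3}) over all $1\leq j\leq n-s$ and prove (L) by the Minkowski-type argument at each rank; the elementary Dirichlet computation cannot be the source of the matching lower bound.
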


We will also provide a condition for $p$-adic extremality in terms of Diophantine properties of the parametrising matrix of an affine subspace. 

\begin{theorem}
	Let $\mathcal{L}$ be an affine subspace, parametrized by a matrix $R_A$ as in (\ref{A}). If all the rows (resp. columns) are rational multiples of one row (resp. column) then one has $$
w_{p}(\mathcal{L})= \max(n, w_{p}(A)) \text{ and } w(\mathcal{L})=\max(n+1, w(A)).
$$
\end{theorem}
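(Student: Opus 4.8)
The plan is to prove the statement for the $\Z[1/p]$-exponent first, in the form
$$
w_{p}(\mathcal{L})=\max\bigl(n,\,w_{p}(A)\bigr),
$$
and then to deduce the corresponding identity for the classical exponent $w$ from the comparison between $w_{p}$ and $w$ established earlier: that comparison carries the generic value $n$ to the generic value $n+1$ and carries $w_{p}(A)$ to $w(A)$, so it preserves the maximum and gives $w(\mathcal{L})=\max(n+1,w(A))$ at once. For each identity we handle the inequality ``$\geq$'' for an arbitrary affine subspace and the inequality ``$\leq$'' using the hypothesis on $R_{A}$.

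The lower bound combines two facts valid for \emph{every} affine subspace. First, $w_{p}(\mathcal{L})\geq n$ and $w(\mathcal{L})\geq n+1$: by Dirichlet's theorem every $\by\in\Q_{p}^{n}$ satisfies $w_{p}(\by)\geq n$, respectively $w(\by)\geq n+1$, so these inequalities hold on a set of full measure and pass to $\mathcal{L}$ by definition of the exponent of a measure. Second, $w_{p}(\mathcal{L})\geq w_{p}(A)$ and $w(\mathcal{L})\geq w(A)$: a good simultaneous $\Z[1/p]$-approximation (resp.\ $\Z$-approximation) to the matrix $R_{A}$ produces a single relation $\tilde{\bq}$ whose associated affine form is small \emph{uniformly} on every bounded piece of $\mathcal{L}$, with $\|\tilde{\bq}\|$ comparable to the height of the approximation; hence almost every $\by\in\mathcal{L}$ inherits the exponent of $A$. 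Together these give $w_{p}(\mathcal{L})\geq\max(n,w_{p}(A))$ and $w(\mathcal{L})\geq\max(n+1,w(A))$.

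The content of the theorem is the reverse inequality, and this is where the structural hypothesis on $R_{A}$ is used. Suppose first that all the rows of $A$ are rational multiples of a single row $\bv$. Clearing denominators and making rational changes of the parametrizing variables and of the ambient coordinates --- none of which alters $w_{p}(\mathcal{L})$, $w_{p}(A)$, $w(\mathcal{L})$ or $w(A)$ --- we may assume $\mathcal{L}$ is parametrized by $\bx\mapsto(\bx,\psi(\bx)\bv)$ for a single affine functional $\psi$, so that, up to such changes, the data of $A$ reduces to that of the vector $\bv$ together with the translation term. In the lattice $g_{\bt}\,u_{R_{A}}\,\Ga$ attached to $\mathcal{L}$ by the Dani-type correspondence for $w_{p}$, the coordinates split into a ``free'' block carrying the $d$ parametrizing variables and a one-dimensional ``$\bv$-block''; a short vector in this lattice must acquire its length either from the free block --- where, for Lebesgue-a.e.\ choice of the free variables, the excursion into the cusp is controlled by the Dirichlet exponent, hence by $n$ --- or from the $\bv$-block, where it is controlled precisely by $w_{p}(A)$. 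A case analysis over which coordinates of a near-minimal vector are active, combined with the quantitative nondivergence estimates developed earlier, yields $w_{p}(\mathcal{L})\leq\max(n,w_{p}(A))$. If instead all the \emph{columns} of $A$ are rational multiples of one column, we argue dually: the $n-d$ affine functions cutting out $\mathcal{L}$ are then, up to rational scaling, a single function, so $\mathcal{L}$ lies in a rational translate of a configuration on which the approximation problem reduces to a \emph{single} linear form --- the one attached to the vector obtained from $A$ --- contributing $w_{p}(A)$, while the complementary directions are Lebesgue-generic and contribute at most $n$. In either case, passing through the $w_{p}$--$w$ comparison completes the proof.

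The principal obstacle is this upper bound, and within it the step of making the splitting of the lattice dynamics rigorous in the $p$-adic setting: lacking an Archimedean place, one cannot invoke the real-variable geometry-of-numbers arguments used by Kleinbock, and one is forced to work with $\Z[1/p]$-lattices whose $p$-adic component genuinely interacts with the rest. It is exactly here that the $\Z[1/p]$-exponent is indispensable, since only for it do we have the ``if and only if'' Dani correspondence needed to convert the depth of cusp excursions into sharp Diophantine bounds. A secondary, purely bookkeeping difficulty is to verify that the rational reductions used to put $A$ into normal form --- which change $R_{A}$ by a rational matrix --- leave all four quantities $w_{p}(\mathcal{L})$, $w(\mathcal{L})$, $w_{p}(A)$ and $w(A)$ invariant.
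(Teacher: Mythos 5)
Your lower bounds and the transfer between $w_{p}$ and $w$ do match the paper (the unnumbered lemma giving $w_{p}(A)\leq w_{p}(\mathcal{L})$, Dirichlet, and Proposition \ref{same} with its matrix analogue), but the decisive step, the upper bound $w_{p}(\mathcal{L})\leq\max\bigl(n,w_{p}(A)\bigr)$ in the row case, is not actually proved. The Dani-type correspondence and quantitative nondivergence do not reduce matters to deciding ``which block a short \emph{vector} lives in'': the criterion the paper extracts, conditions (\ref{covolume1}) and (\ref{covolume3}), demands lower bounds on $\cov\bigl(g_tu_{\f(x)}\Delta\bigr)$ for primitive submodules $\Delta$ of \emph{every} rank $j=1,\dots,n-s$, equivalently on $\Vert R_A\bc(\bw)\Vert_p\Vert\bw\Vert_\infty$ for multivectors $\bw\in\mathcal{S}_{n+1,j}$. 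Your rank-one case analysis only controls the quantity that the paper identifies with $w^{p}_1(A)=w_{p}(A)$ (Lemma \ref{w=w_1}); it says nothing about the higher exponents $w^{p}_j(A)$, $j\geq 2$, of Definition \ref{def:higher}, which a priori could push $w_{p}(\mathcal{L})$ above $\max(n,w_{p}(A))$. Handling them is precisely the content of Section \ref{higher}: Lemma \ref{pidot} and Corollary \ref{max w_j} give $w_{p}(\mathcal{L})=\max\bigl(n,\,w^{p}_j(A),\ j=1,\dots,n-s\bigr)$, and the hypothesis that all rows are rational multiples of one row is then exploited through the invariance of $w^{p}_j$ under rational row operations and the row-removal lemma, reducing to a one-row matrix. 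Your sketch (``a case analysis over which coordinates of a near-minimal vector are active'') skips exactly this, so the upper bound is a genuine gap, not a bookkeeping issue.

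The column case is also mishandled, although the correct argument is much simpler than what you propose. If $A$ has at least two columns and all of them are rational multiples of one column, then there is $\bq\in\Z^{n-s}\setminus\{0\}$ with $A\bq=0$, whence $w_{p}(A)=\infty$; since $w_{p}(\mathcal{L})\geq w_{p}(A)$, both sides of the claimed identity are infinite and there is nothing to prove. If $A$ has a single column, $\mathcal{L}$ is a hyperplane and only $j=1$ occurs, which is Theorem \ref{hyperplane}. Your ``dual'' reduction to a single linear form with ``Lebesgue-generic complementary directions'' is therefore unnecessary, and as sketched it is not a proof. The transfer to the classical exponent at the end is fine in substance: the relation between the two exponents shifts $n$ to $n+1$ and $w_{p}(A)$ to $w(A)$, so $w(\mathcal{L})=\max(n+1,w(A))$ follows once the $w_{p}$ identity is established.
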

 
Diophantine approximation on affine subspaces plays in role in KAM theory, see \cite{Sevryuk, SuRafa} and we hope that the present paper might also find applications. 
The setup of Diophantine approximation by $\mathbb{Z}[1/p]$-points has already been considered in considerable detail in the context of \emph{intrinsic Diophantine approximation} on varieties by Ghosh, Gorodnik and Nevo \cite{GGN1, GGN2}.   However, as we have mentioned, as far as we are aware the present paper is the first one to consider it in the context of (extrinsic) Diophantine approximation on manifolds.  Moreover, we are not aware of any such connection between exponents of $\Z$-approximation and $\Z[\frac{1}{p}]$-approximation in case of reals. 
\subsection*{Structure of the paper}
The main results of the paper are proved in the more general setting of Federer measures and nonplanar maps. In the next section we present these and more definitions leading up to the statement of Theorem \ref{main:fed} from which Theorem \ref{main:sp} follows. Section \ref{sec:expo} introduces a $p$-adic exponent $w_{p}$ and discusses the relationship between $w_{p}$ and $w$. In \S \ref{dani} we prove an important dictionary in $p$-adic dynamics: namely the explicit connection between $p$-adic exponents and homogeneous dynamics. In \S \ref{margulis} we extend the quantitative nondivergence theorem proved by Kleinbock to the $p$-adic setting. The subsequent section \ref{kleinbock} deals with applications on nondivergence to $p$-adic exponents, in particular proving Theorem \ref{main:fed}. The final section \ref{higher} deals with higher Diophantine exponents.

\subsection*{Acknowledgements} Part of this work was done when both authors were visiting the Weizmann institute. We thank Uri Bader and the Weizmann institute for the hospitality. A. G. gratefully acknowledges the support of the Benoziyo Endowment Fund for the Advancement of Science at the
Weizmann Institute. Another part of this work was done when the authors were visiting the ICTS, Bengaluru. We thank the institute for the pleasant working conditions.

\section{Preliminaries}
\subsection*{Measures and spaces}
A metric space $X$ is called \emph{Besicovitch} \cite{KT} if there exists a constant $N_X$ such that the following holds: for any bounded subset $A$ of $X$ and for any family $\mathcal{B}$ of nonempty open balls in $X$ such that
$$ \forall x \in A \text{ is a center of some ball of } B,$$
 there is a finite or countable subfamily $\{B_i\}$ of $B$ with
 $$ 1_A \leq \sum_{i}1_{B_i} \leq N_X. $$

As remarked in \cite{KT}, any separable ultrametric space $X$ is Besicovitch with $N_X = 1$. We now define $D$-Federer measures following \cite{KLW}. Let $\mu$ be a Radon measure on $X$, and $U$ an open subset of $X$ with $\mu(U) > 0$. We  say that $\mu$ is $D$-Federer on $U$ if
$$ \sup_{\substack{x \in \supp \mu, r > 0\\ B(x, 3r) \subset U}} \frac{\mu(B(x, 3r))}{\mu(x,r)} < D.$$
Finally, we say that $\mu$ as above is Federer if for $\mu$-a.e. $x \in X$ there exists a neighbourhood $U$ of $x$ and $D > 0$ such that $\mu$ is $D$-Federer on $U$. We refer the reader to \cite{KLW, KT} for examples of Federer measures.\\

Following, \cite{Kleinbock-exponent}, for a subset $M$ of $\Q_{p}^n$, define its affine span $\langle M\rangle_a$ to be the intersection of all affine subspaces of $\Q_{p}^n$ containing $M$. Let $X$ be a metric space, $\mu$ a measure on $X$, $\cL$ an affine subspace of  $\Q_{p}^n$ and $f$ a map from $X$ into $\cL$. Say that $(f, \mu)$ is nonplanar in $\cL$ if
$$\cL = \langle f (B \cap \supp \mu)\rangle_a \forall \text{ nonempty open } B \text{ with } \mu(B) > 0.$$

\subsection*{$(C, \alpha)$-good functions}\label{sec:good}
In this section, we recall the notion of $(C, \alpha)$-good functions on ultrametric spaces. We follow the treatment of Kleinbock and Tomanov \cite{KT}. Let $X$ be a metric space, $\mu$ a Borel measure on $X$ and let $(F, |\cdot|)$ be a valued field. For a subset $U$ of $X$ and $C, \alpha > 0$, say that a Borel measurable function $f : U \to F$ is $(C, \alpha)$-good on $U$ with respect to $\mu$ if for any open ball $B \subset U$ centered in $\supp \mu$ and $\varepsilon > 0$ one has
\begin{equation}\label{gooddef}
\mu \left(\{ x \in B \big| |f(x)| < \varepsilon \} \right) \leq
C\left(\displaystyle \frac{\varepsilon}{\sup_{x \in
B}|f(x)|}\right)^{\al}|B|.
\end{equation}

Where $\|f\|_{\mu, B} = \sup \{c : \mu(\{x \in B : |f(x)| > c\}) > 0\}$.

\begin{theorem}\label{good}[Theorem $4.3$ \cite{KT}]
 Let $F$ be either $\R$ or an ultrametric valued field, and let $f$ be a $C^l$ map from an open subset $U$ of $F^d$ to $F^n$. Then $f$ is nonplanar and good at every point of $U$ where it is nondegenerate.        
\end{theorem}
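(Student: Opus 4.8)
Although Theorem \ref{good} is quoted from \cite{KT}, here is the plan of proof, which follows the Kleinbock--Margulis template. We may assume $\langle \f(U)\rangle_a = F^n$ (the general case, where $\f(U)$ lies in a proper affine subspace $\cL$, is handled identically with $F^n$ replaced by the linear part of $\cL$); then nondegeneracy of $\f$ at $x_0$ means precisely that the partial derivatives $\partial^{\mathbf{j}}\f(x_0)$ with $1\le|\mathbf{j}|\le l$ span $F^n$, and the two conclusions become: (i) $\langle\f(B)\rangle_a=F^n$ for every small enough ball $B\ni x_0$; and (ii) every function $x\mapsto c_0+\bc\cdot\f(x)$, $c_0\in F$, $\bc\in F^n$, is $(C,\al)$-good on some fixed ball $B_0\ni x_0$ with $C,\al$ independent of $(c_0,\bc)$.

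Conclusion (i), nonplanarity, is the soft half. Nondegeneracy at $x_0$ says that no nonzero $\bc$ kills all the $\partial^{\mathbf{j}}\f(x_0)$ with $1\le|\mathbf{j}|\le l$; this is an open condition on the base point because those derivatives depend continuously on it, so $\f$ is nondegenerate at every point of a neighborhood $V$ of $x_0$. If $\f(B)$ lay in a proper affine subspace $\bv_0+W$ for some ball $B\subseteq V$, then for any $x\in B$ and any linear functional $\ell$ vanishing on $W$ we would get $\ell(\partial^{\mathbf{j}}\f(x))=\partial^{\mathbf{j}}(\ell\circ\f)(x)=0$ for $1\le|\mathbf{j}|\le l$, forcing all these derivatives into $W\subsetneq F^n$ and contradicting nondegeneracy at $x$. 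Hence $\langle\f(B)\rangle_a=F^n$ for all balls $B\subseteq V$.

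For conclusion (ii) I would combine two ingredients. The first: polynomials are good — there are $C_0,\al_0>0$ depending only on $d$ and $l$ such that every polynomial $F^d\to F$ of degree $\le l$ is $(C_0,\al_0)$-good on every ball with respect to Haar measure; over $\R$ this is the Kleinbock--Margulis lemma (one variable via zero-counting, then Fubini), and over an ultrametric field it is carried out in \cite{KT} \S 4, with the non-Archimedean analogue of ``a nonzero degree-$l$ polynomial is small only on a small set'' as the one-variable input. The second: a perturbation lemma — a function $g$ on a ball $B$ which, along a suitable recursive subdivision of $B$, is on each piece $B'$ within a small fixed fraction of $\sup_{B'}|P_{B'}|$ of a polynomial $P_{B'}$ of degree $\le l$, is itself $(C_1,\al_0)$-good on $B$, with $C_1$ controlled by $C_0$ and the Besicovitch constant. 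Nondegeneracy supplies the hypothesis of this lemma near $x_0$. By continuity and compactness of the sphere $\{\|\bc\|=1\}$ in $F^n$ there are $\delta>0$ and a neighborhood $V_1\ni x_0$ with $\max_{1\le|\mathbf{j}|\le l}|\bc\cdot\partial^{\mathbf{j}}\f(x)|\ge\delta$ for all $x\in V_1$ and $\|\bc\|=1$. Taylor expansion of $g=c_0+\bc\cdot\f$ at such an $x$ gives $g=P+R$ on $B(x,r)$, where $P$ is the degree-$\le l$ Taylor polynomial and $\sup_{B(x,r)}|R|\le\psi(r)r^l$ for a modulus of continuity $\psi$ of the order-$l$ derivatives of $\f$, uniform over $x\in V_1$ and unit $\bc$. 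Since one coefficient of the rescaled polynomial $\mathbf{u}\mapsto P(x+r\mathbf{u})$ has absolute value $\gtrsim\delta r^l$, norm-equivalence on the finite-dimensional space of degree-$\le l$ polynomials yields $\sup_{B(x,r)}|P|\gtrsim\delta r^l$, so $\sup_{B(x,r)}|R|$ drops below any prescribed fraction of $\sup_{B(x,r)}|P|$ once $r$ is small, uniformly in $x$ and $\bc$. Applying the perturbation lemma on a sufficiently small $B_0\subseteq V_1$ then gives (ii).

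The main obstacle is the perturbation lemma. Mere $C^0$-closeness of $g$ to a good polynomial $P$ does not transfer the estimate \eqref{gooddef}: although $\sup_B|R|$ is small compared with $\sup_B|P|$, it is typically far bigger than the threshold $\varepsilon$, so $\{x\in B:|g(x)|<\varepsilon\}$ need not lie inside $\{x\in B:|P(x)|<O(\varepsilon)\}$. The remedy is multiscale — recursively pass to sub-balls on which $R$ becomes genuinely negligible relative to the current $\varepsilon$ and the current sup of $|P|$, apply the polynomial estimate there, and sum using the Besicovitch property — while keeping every constant uniform over the $(n+1)$-parameter family $(c_0,\bc)$ and over the base point. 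In the ultrametric case there is the further task, handled in \cite{KT} \S 4, of proving the one-variable anti-concentration bound for polynomials, together with the norm-equivalence and sup-norm comparisons above, directly over a complete ultrametric valued field rather than importing them from real analysis.
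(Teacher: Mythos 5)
The paper does not actually prove this statement: Theorem \ref{good} is imported verbatim from \cite{KT} (Theorem 4.3), so there is no in-paper argument to compare with, and the only fair benchmark is the proof in \cite{KT} (which in turn follows \cite{KM}). Your soft half is fine: nondegeneracy is an open condition on the base point, and the annihilator argument (a functional vanishing on the putative proper affine span kills all partials up to order $l$) is exactly the standard way to get nonplanarity. Your setup for the goodness half --- compactness over unit coefficient vectors $\bc$, uniform lower bound $\max_{1\le|\mathbf{j}|\le l}|\bc\cdot\partial^{\mathbf{j}}f(x)|\ge\delta$, Taylor expansion, the bound $\sup_{B(x,r)}|P|\gg\delta r^{l}$ by norm equivalence on polynomials of bounded degree, and the input that polynomials are $(C,\alpha)$-good over $\R$ and over ultrametric fields --- is also the standard Kleinbock--Margulis frame.

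The genuine gap is the ``multiscale perturbation lemma'', which you yourself flag as the main obstacle but never formulate precisely, let alone prove. As you correctly observe, single-scale $C^{0}$-closeness of $g=c_0+\bc\cdot f$ to a good polynomial does not transfer the estimate \eqref{gooddef}, and it is exactly here that the actual proofs take a different route: \cite{KM} (Proposition 3.4) and its ultrametric adaptation in \cite{KT} deduce the $d$-variable statement by induction on the dimension from a one-variable lemma --- a $C^{k}$ function whose $k$-th derivative (difference-quotient derivative in the non-Archimedean calculus of \cite{KT}) is bounded away from zero is $(C,1/k)$-good, proved by comparison with interpolation polynomials --- combined with a Fubini/Besicovitch covering argument to pass from lines to balls; no $d$-dimensional recursive-subdivision lemma of the kind you postulate is invoked, and making your version precise (what must hold on each piece, how the pieces are generated, how the covering multiplicity enters, why the exponent $\alpha$ survives the recursion, and uniformity in $(c_0,\bc)$ and in the base point) is precisely the nontrivial analytic content left open. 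So the proposal is a reasonable high-level plan whose key step is missing; to complete it you should either prove your subdivision lemma in full or simply follow the dimension-induction argument of \cite{KM}, \cite{KT}, which is what the cited source does.
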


As a corollary, we have 
\begin{corollary}\label{affgood}
Let $\mathcal{L}$ be an affine subspace of $\Q_{p}^n$ and let $f = (f_1,\dots, f_n)$ be a smooth map from an open subset $U$ of $\Q_{p}^d$ to $\mathcal{L}$ which is nondegenerate in $\mathcal{L}$ at $x_0 \in U$. Then $f$ is good at $x_0$.
\end{corollary}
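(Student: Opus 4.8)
The plan is to reduce the corollary to Theorem~\ref{good} by an affine change of coordinates on the target $\Q_p^n$ that straightens $\mathcal{L}$ to a coordinate subspace. Here one reads ``$f=(f_1,\dots,f_n)$ is good at $x_0$'' as: there are a neighbourhood $U'$ of $x_0$ and constants $C,\alpha>0$ such that every function in $\Span_{\Q_p}(1,f_1,\dots,f_n)$ is $(C,\alpha)$-good on $U'$ with respect to $\mu$. So phrased, goodness at $x_0$ is manifestly unchanged when $f$ is replaced by $\Phi\circ f$ for an affine automorphism $\Phi$ of $\Q_p^n$, since such a $\Phi$ merely reparametrises the affine hull $\Span_{\Q_p}(1,f_1,\dots,f_n)$. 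Securing this invariance cleanly is the one bookkeeping point that needs care, and it is the reason for framing goodness in terms of the whole affine hull rather than the individual coordinates.

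Concretely I would write $\mathcal{L}=y_0+V$ with $V$ the linear part of $\mathcal{L}$ and $m=\dim V$, choose $g\in\GL_n(\Q_p)$ with $g(V)=\Q_p^m\times\{0\}^{n-m}$, and put $\Phi(x):=g(x-y_0)$. Then $h:=\Phi\circ f$ takes values in $\Q_p^m\times\{0\}^{n-m}$, hence has the form $h=(\tilde h,0,\dots,0)$ for a smooth map $\tilde h\colon U\to\Q_p^m$.

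Next I would verify that $\tilde h$ is nondegenerate at $x_0$ in the sense of Theorem~\ref{good}, i.e. as a map into $\Q_p^m$. This is the $\GL_n(\Q_p)$-equivariance of the nondegeneracy condition: since $g$ is a fixed invertible linear map it commutes with every partial differentiation operator, so for each order $k$ the span of the partial derivatives of $h$ at $x_0$ of order $\le k$ is the $g$-image of the corresponding span for $f$; as the latter equals $V$ for a suitable $k$ by hypothesis, the former equals $g(V)=\Q_p^m\times\{0\}^{n-m}$, which is the full target of $\tilde h$.

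Finally, applying Theorem~\ref{good} to $\tilde h$ produces a neighbourhood $U'$ of $x_0$ and $C,\alpha>0$ for which $\tilde h$ is good at $x_0$. Since $\Span_{\Q_p}(1,h_1,\dots,h_n)=\Span_{\Q_p}(1,\tilde h_1,\dots,\tilde h_m)$, the map $h$ is good at $x_0$, and therefore so is $f=\Phi^{-1}\circ h$ by the invariance recorded in the first paragraph. The main --- essentially the only --- obstacle is this initial bookkeeping; once goodness is set up to be transparently affine-invariant on the target, no estimate beyond Theorem~\ref{good} is required.
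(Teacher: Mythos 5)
Your argument is correct and is essentially the paper's own proof: the paper simply cites Kleinbock's Corollary~3.2 in \cite{Kleinbock-extremal}, whose argument is exactly this straightening of $\mathcal{L}$ by an affine automorphism of the target, the observation that $\Span_{\Q_p}(1,f_1,\dots,f_n)$ (hence goodness) is unchanged, and the equivariance of nondegeneracy, followed by an application of Theorem~\ref{good}. You have merely written out the details that the paper delegates to the citation.
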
            
               
\begin{proof}
This follows from Theorem \ref{good}, see Corollary $3.2$ in \cite{Kleinbock-extremal}.
\end{proof}

\subsection*{Main Theorem}

Our main result is a complete $p$-adic  analogue of Theorem $0.3$ of \cite{Kleinbock-exponent}.

\begin{theorem}\label{main:fed}
	Let $\mu$ be a Federer measure on a Besicovitch metric space $X, \mathcal{L}$ an affine subspace of $\Q_p^n$, and let $\f : X\to \mathcal{L}$ be a continuous map such that $(\f, \mu)$ is good and nonplanar in $\mathcal{L}$. Then 
	\begin{equation}
	w(\f_*\mu)=w(\mathcal{L})= \inf\{ w(\by)~|~\by\in\mathcal{L}\}= \inf\{w(\f(x)) ~|~ x\in\supp\mu\}.
	\end{equation}
\end{theorem}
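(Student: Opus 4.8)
The plan is to trap the four quantities between two soft inequalities and two substantive ones. Write $\lambda_{\mathcal{L}}$ for Haar measure on $\mathcal{L}$, so that by convention $w(\mathcal{L})=w(\lambda_{\mathcal{L}})$, and recall that $w(\by)\ge n+1$ for every $\by$ (by Dirichlet's theorem). \emph{Step 1: soft bounds.} Since $\f(\supp\mu)\subseteq\mathcal{L}$ we get $\inf_{\by\in\mathcal{L}}w(\by)\le\inf_{x\in\supp\mu}w(\f(x))$ at once. Next, if $v<\inf_{x\in\supp\mu}w(\f(x))$ then $\{x:w(\f(x))>v\}$ contains $\supp\mu$ and so has positive $\mu$-mass (nonplanarity makes $\mu$ nontrivial), whence $\f_*\mu(\{y:w(y)>v\})>0$ and therefore $w(\f_*\mu)\ge\inf_{x\in\supp\mu}w(\f(x))$. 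Running the same argument for the pair $(\mathrm{id}_{\mathcal{L}},\lambda_{\mathcal{L}})$ gives $\inf_{\by\in\mathcal{L}}w(\by)\le w(\mathcal{L})$. It therefore suffices to prove (I) $w(\f_*\mu)\le w(\mathcal{L})$ and (II) $w(\by)\ge w(\mathcal{L})$ for every $\by\in\mathcal{L}$: granting these, $w(\mathcal{L})\le\inf_{\by\in\mathcal{L}}w(\by)\le\inf_{x\in\supp\mu}w(\f(x))\le w(\f_*\mu)\le w(\mathcal{L})$, and all four quantities collapse to a single value.

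\emph{Step 2: the transference bound (II).} After a permutation of coordinates, realise $\mathcal{L}$ as the image of an affine map $\bx\mapsto\bx A+\ba$, that is, parametrise it by the matrix $R_A$ of~(\ref{A}). Given $\by\in\mathcal{L}$, I would take integer vectors $\tilde{\bq}$ almost realising the matrix exponent that governs $w(\mathcal{L})$, and from the rows of $A$ together with the coordinates of $\by$ build integer vectors $\tilde{\bq}'$ for which $|q_0'+\bq'\cdot\by|_p$ is correspondingly small relative to $\|\tilde{\bq}'\|_\infty$; this yields $w(\by)\ge w(\mathcal{L})$, the unconditional bound $w(\by)\ge n+1$ covering the remaining regime. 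This step is elementary---the content is bookkeeping between $w(A)$, the exponent of $\mathcal{L}$, the inhomogeneous translation term, and the normalisation of heights---and it mirrors Kleinbock's computation of $\omega(\mathcal{L})$ in \cite{Kleinbock-exponent}.

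\emph{Step 3: the main inequality (I).} The natural route is the Dani correspondence together with quantitative nondivergence; but in the $p$-adic world, approximation of $p$-adic vectors by honest rationals produces the dynamical dictionary in only one direction, which forces the passage to the $\Z[1/p]$-exponent $w_p$. So I would proceed thus. By the comparison between $w_p$ and $w$ in \S\ref{sec:expo} it is enough to bound $w_p(\f_*\mu)$. The $p$-adic Dani correspondence of \S\ref{dani} converts ``$w_p(\by)>v$'' into the \emph{equivalent} statement that the trajectory of $u_{\by}$ under the diagonal flow $g_t$ tuned to $v$ is divergent in the relevant homogeneous space of lattices; hence $w_p(\f_*\mu)>v$ precisely when $\mu\big(\{x:\text{the trajectory of }u_{\f(x)}\text{ is divergent}\}\big)>0$. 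Now $\f$ and $R_A$ assemble into a matrix-valued map whose good and nonplanar properties in the ambient space are exactly what ``$(\f,\mu)$ good and nonplanar \emph{in} $\mathcal{L}$'' supplies---via Corollary~\ref{affgood}/Theorem~\ref{good} and the invariance of these properties under composition with a fixed linear map---so feeding it into the $p$-adic quantitative nondivergence estimate of \S\ref{margulis} shows that divergence on a positive-measure set of $x$ cannot occur while $v$ stays below the critical exponent attached to $\mathcal{L}$, which (exactly as in the real case, carried out in \S\ref{kleinbock}) equals $w_p(\mathcal{L})$. Translating back through \S\ref{sec:expo} yields $w(\f_*\mu)\le w(\mathcal{L})$.

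I expect the whole weight of the argument to sit in Step 3, on two fronts. First, making the Dani correspondence a genuine ``if and only if'' in the $p$-adic setting: this is precisely what the $\Z[1/p]$-exponent is engineered to deliver, and it is the paper's central innovation. Second, one must \emph{factor out} the rational linear relations that $\mathcal{L}$ carries intrinsically---these already force certain sublattices to be short---so that the nondivergence theorem, calibrated for genuinely full-dimensional nondegenerate data, can be applied to the $\mathcal{L}$-restricted data without the built-in degeneracies of $\mathcal{L}$ swamping the estimate; keeping ``good and nonplanar \emph{in} $\mathcal{L}$'' intact through the parametrisation $R_A$ is the technical heart of the matter. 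Everything else is comparatively soft.
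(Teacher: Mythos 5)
There is a genuine gap, and it sits exactly where the paper warns the difficulty lies: your Step 2. You propose to prove the middle inequality, $w(\by)\ge w(\mathcal{L})$ for \emph{every} $\by\in\mathcal{L}$, by an ``elementary bookkeeping'' transference from the parametrising matrix $R_A$ of (\ref{A}), i.e.\ from the matrix exponent $w(A)$, claiming this mirrors Kleinbock's computation of $\omega(\mathcal{L})$. That computation does not exist in the form you need. In general $w_p(\mathcal{L})$ is \emph{not} $\max(n, w_p(A))$: by Corollary \ref{w(L)} and Corollary \ref{max w_j} it equals $\max\big(n, w^{p}_j(A),\ j=1,\dots,n-s\big)$, and the higher exponents $w^{p}_j(A)$ for $j\ge 2$ (Definition \ref{def:higher}), coming from rank-$j$ submodules, can dominate; the collapse to the $j=1$ exponent is only established for hyperplanes (Theorem \ref{hyperplane}) or matrices whose rows/columns are rational multiples of a single one (Theorem \ref{columns}). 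The transference you sketch --- building from good approximations to $A$ good approximations to each $\by\in\mathcal{L}$ --- is precisely the content of the lemma in Section \ref{higher}, and it only yields $w_p(A)\le \inf\{w_p(\by):\by\in\mathcal{L}\}$, which is insufficient when $w_p(\mathcal{L})>\max(n,w_p(A))$. There is no known direct Diophantine transference from the rank-$j$ conditions to individual points, which is why the ``middle equality'' is singled out as non-obvious both here and in \cite{Kleinbock-exponent}.

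The paper closes this gap dynamically, not arithmetically: in Theorem \ref{main theorem} the single-point condition (\ref{c1}) (for each $u>v$ there is \emph{one} point of $\f(\supp\mu)$ outside $\mathcal{W}^p_u$) is shown to imply the measure bound (\ref{c2}), via the contrapositive of Lemma \ref{necessary} (an $S$-arithmetic Minkowski argument showing that failure of the covolume condition (\ref{A2}) forces \emph{all} points of $\f(B\cap\supp\mu)$ into some $\mathcal{W}^p_u$) followed by Proposition \ref{prop1} (quantitative nondivergence plus Borel--Cantelli). Applying this equivalence to the parametrisation $\mathbf{h}$ with Haar measure and $v=\inf\{w_p(\by):\by\in\mathcal{L}\}$ gives $w_p(\mathcal{L})\le v$, which is the inequality your Step 2 was meant to supply. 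Your Step 3 is essentially the paper's route to $w(\f_*\mu)\le w(\mathcal{L})$ (comparison of $w$ and $w_p$, the if-and-only-if correspondence of \S\ref{dani}, nondivergence of \S\ref{margulis}, and the key fact that conditions (\ref{covolume1})/(\ref{covolume3}) depend only on $R$, hence only on $\mathcal{L}$), so the repair is available to you at no extra cost: run the equivalence of Theorem \ref{main theorem} once more for $(\mathbf{h},\lambda)$ with $v$ equal to the infimum, instead of attempting a transference through $w(A)$. As written, however, the proposal's argument for the middle equality would fail for subspaces of codimension at least two whose higher exponents exceed $\max(n,w_p(A))$.
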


\section{$p$-adic Diophantine exponents}\label{sec:expo}
We begin with some motivation for $p$-adic Diophantine approximation and the definition of $v$-approximable numbers in $\Q_p^{n},$ both using $\Z$ and $\Z[1/p]$ approximations. A natural starting point should be an analogue of Dirichlet's theorem in this set up. In \cite{KT} a $p$-adic Dirichlet's theorem using $\Z$ approximations has been discussed in detail. Here we observe that Dirichlet's theorem using $\Z$ approximations does indeed give a Dirichlet theorem in case of $\Z[1/p]$ approximations. \\

 Here and below, we adopt the notation $\tilde{\bq} :=(q_0,\bq)$ for $q_0, \bq$ in $\Z$ as well as $\Z[1/p]$. Lets recall the Dirichlet's theorem in \cite{KT}, for $\by\in\Q_p^n$ and for every $Q>0$ there exists an integer solution for the system 
\begin{align}
\vert q_0+\bq.\by\vert_p & \leq \frac{const(\by)}{Q},\\ 
\Vert \tilde \bq\Vert^{n+1}_{\infty} & \leq Q.
\end{align}
Now observe that this implies that for every $Q>0$ there exists an integer solution to the system 
\begin{align}
\Vert\tilde{\bq}\Vert_\infty.\vert q_0+\bq.\by\vert_p &\leq \frac{const(\by)}{Q^n},\\
\Vert\tilde{\bq}\Vert_{\infty} &\leq Q.
\end{align}

\noindent Note that $\Vert{\bq}\Vert_p\leq 1$ for integer vectors, so we have that for any $\by\in\Q_p^n$ and every $Q > 0$ there exists a $\Z[1/p]^{n+1} $ solution to the system 
\begin{align}
\Vert\tilde{\bq}\Vert_\infty.\vert q_0+\bq.\by\vert_p &\leq \frac{const(\by)}{Q^n},\\
\Vert\bq\Vert_p \Vert\tilde{\bq}\Vert_\infty
&\leq Q.
\end{align}
This motivates the follwoing definition.
\begin{definition}{\textbf{v-$\Z[1/p]$ approximable vectors:}} $\by\in\Q_p^n$ is $v-\Z[1/p]$-approximable if there exist $\tilde{\bq}=(q_0,\bq)\in\Z[1/p]^{n+1}$ with unbounded $||\bq||_p||\tilde{\bq}||_\infty$ such that 
		\begin{equation}\label{Z_S}
		|\bq.\by+q_0|_p<\frac{1}{(||\bq||_p||\tilde{\bq}||_\infty)^v||\tilde{\bq}||_\infty}.
		\end{equation}
	\end{definition}
\noindent
We will denote $v-\Z[1/p]$-approximable points by $\mathcal{W}_v^{p}$ and also define 
\begin{equation}\label{def:papp}
w_{p}(\by):=\sup\{v \text{ appearing in } (\ref{Z_S}) \}.
\end{equation}

Similarly, we will denote $v$-approximable points by $\mathcal{W}_v$ and recall that we have defined $$w(\by):=\sup\{v \text{ appearing in } (\ref{def:exp}) \}. $$

We therefore have two Diophantine exponents:
\begin{enumerate}
\item $\omega$, defined in (\ref{def:exp}) involving  $\Z$-Diophantine approximation of $p$-adic vectors. \\
\item $\omega_p$, defined in (\ref{def:papp}) involving $\Z[1/p]$-Diophantine approximation of $p$-adic vectors.
\end{enumerate}
Later on, we will need higher Diophantine exponents of both kinds. Although the two types of approximations are a priori different, we will shortly prove that the Diophantine exponents are very closely related.
The following lemma will come handy to compare these Diophantine exponents and further to relate Diophantine approximation to dynamics.
\begin{lemma}\label{setproperty}
Consider the set \begin{equation}\label{theset}
E= \left\{ |q_0+\bq.\by|_p\Vert\tilde{\bq}\Vert_\infty,\Vert\bq\Vert_p\Vert\tilde{\bq}\Vert_\infty ~\big | ~ \tilde{\bq}=(q_0,\bq)\in\Z[1/p]^{n+1}\right\}.
	\end{equation} If $(x_k,z_k)\in E$ such that $z_k$ is bounded and $x_k\to 0$ then $x_k=0$ for all but finitely many $k$.
\end{lemma}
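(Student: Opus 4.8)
The plan is to exploit the invariance of both quantities defining $E$ under rescaling $\tilde{\bq}\mapsto p^{m}\tilde{\bq}$ by powers of $p$. Indeed, the $p$-adic absolute value scales by $p^{-m}$ while the archimedean sup-norm on $\Z[1/p]^{n+1}\subset\Q^{n+1}$ scales by $p^{m}$, so both $|q_0+\bq\cdot\by|_p\Vert\tilde{\bq}\Vert_\infty$ and $\Vert\bq\Vert_p\Vert\tilde{\bq}\Vert_\infty$ are unchanged; hence every element of $E$ is represented by a \emph{$p$-primitive} vector, i.e.\ one with $\tilde{\bq}\in\Z^{n+1}$ and $p\nmid\gcd(q_0,\dots,q_n)$. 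Note that such a $\tilde{\bq}$ is nonzero, so $\Vert\tilde{\bq}\Vert_\infty\geq 1$.

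Next I would fix a constant $c=c(\by)\geq 0$ with $v_p(\bq\cdot\by)\geq\big(\min_{1\leq i\leq n}v_p(q_i)\big)-c$ for every $\bq\in\Z^n$, which exists since $\by$ is fixed (e.g.\ $c=\max\big(0,-\min_i v_p(y_i)\big)$, omitting zero coordinates). Let $M$ bound the second coordinates $z_k$. For a $p$-primitive $\tilde{\bq}$ put $v:=\min_{1\leq i\leq n}v_p(q_i)\geq 0$, so $\Vert\bq\Vert_p=p^{-v}$. If $v>c$ (this includes $\bq=\mathbf{0}$, interpreting $v=+\infty$), then every $q_i$ with $i\geq 1$ is divisible by $p$, so $p$-primitivity forces $v_p(q_0)=0$; since $v_p(\bq\cdot\by)\geq v-c>0$, the ultrametric inequality gives $v_p(q_0+\bq\cdot\by)=0$, hence $|q_0+\bq\cdot\by|_p=1$ and the first coordinate of the corresponding element of $E$ is $\Vert\tilde{\bq}\Vert_\infty\geq 1$. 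If instead $v\leq c$, then $\Vert\bq\Vert_p=p^{-v}\geq p^{-c}$, so $\Vert\bq\Vert_p\Vert\tilde{\bq}\Vert_\infty\leq M$ forces $\Vert\tilde{\bq}\Vert_\infty\leq Mp^{c}$; there are only finitely many integer vectors $\tilde{\bq}$ with this bound, so the quantity $|q_0+\bq\cdot\by|_p\Vert\tilde{\bq}\Vert_\infty$ takes only finitely many values over them, and its positive values are bounded below by some $\delta>0$.

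Combining the two cases, every $(x,z)\in E$ with $z\leq M$ satisfies either $x=0$ or $x\geq\eta:=\min(1,\delta)>0$, a bound depending only on $M$ and $\by$. Since the given sequence has $x_k\to 0$, we get $x_k<\eta$ for all large $k$, which forces $x_k=0$, as desired. The only delicate point is the bookkeeping of the first paragraph: checking that $E$ is genuinely invariant under the $p$-scaling, that a $p$-primitive representative always exists, and correctly pinning down $c$ when $\by$ has coordinates of negative $p$-valuation; everything else is elementary.
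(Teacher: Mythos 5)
Your proof is correct, and the engine is the one the paper uses: both coordinates of $E$ are invariant under $\tilde{\bq}\mapsto p^{m}\tilde{\bq}$, and once this scaling is factored out, the bound on the second coordinate leaves only finitely many genuinely distinct possibilities, so the first coordinate cannot be small and nonzero. The execution, however, differs in two useful ways. The paper argues along the given sequence: it first bounds $|q_{0k}|_p\Vert\tilde{\bq}_k\Vert_\infty$ by combining the bound on $z_k$ with the boundedness of $x_k$ (available because $x_k\to 0$) and $\Vert\by\Vert_p$, then observes that the $p$-free parts of all entries are bounded and that the exponent discrepancies are bounded, so $x_k$ ranges over a finite set of values. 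You instead normalize each element of $E$ to a $p$-primitive integer representative and prove the stronger, sequence-free dichotomy: every $(x,z)\in E$ with $z\le M$ has $x=0$ or $x\ge \eta(M,\by)>0$. Your Case 1 (when $\Vert\bq\Vert_p\le p^{-c-1}$, the ultrametric inequality forces $|q_0+\bq\cdot\by|_p=1$, hence $x\ge 1$) replaces the paper's use of the hypothesis $x_k\to 0$ to control $q_0$, and your Case 2 reduces to the finitely many integer vectors with $\Vert\tilde{\bq}\Vert_\infty\le Mp^{c}$. The payoff is a uniform gap statement depending only on $M$ and $\by$, which is cleaner and slightly stronger than what the paper extracts; the cost is exactly the valuation bookkeeping you flag, all of which checks out. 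One pedantic remark: the zero vector $\tilde{\bq}=0$ admits no $p$-primitive representative, but it contributes only the pair $(0,0)$, so your dichotomy, and hence the conclusion, is unaffected.
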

\begin{proof}
	Suppose $\Vert \bq_k\Vert_p\Vert\tilde{\bq}_{k}\Vert_\infty \leq M$ for some $M>0$ and 
	$$\vert q_{0k}+\bq_k.\by\vert_p\Vert \tilde{\bq}_k\Vert_\infty\rightarrow 0, \text{ as }k\to\infty,$$ 
	where $\tilde{\bq}_k=(q_{0k},\bq_k)=(q_{ik})_{i=0}^{n}\in\Z[1/p]^{n+1}$. Since $$
	\vert q_{0k}\vert_p\Vert\tilde\bq_k\Vert_\infty\leq \vert q_{0k}+\bq_k.\by\vert_p\Vert \tilde{\bq}_k\Vert_\infty+ \Vert \bq_k\Vert_p\Vert\tilde{\bq}_{k}\Vert_\infty\Vert\by\Vert_{p},$$ we can choose $M$ such that $\vert q_{0k}\vert_p\Vert\tilde\bq_k\Vert_\infty\leq M$ and $\Vert \bq_k\Vert_p\Vert\tilde{\bq}_{k}\Vert_\infty \leq M$ and therefore $\Vert \tilde\bq_k\Vert_p\Vert\tilde{\bq}_{k}\Vert_\infty \leq M$.\\ 
	Note that there are only finitely many $p$-free integers in $q_{ik},$ i.e. in $\tilde \bq_k=(q_{ik})_{i=0}^{n}=(p^{m_{ik}}z_{ik})_{i=0}^n$ where $p\nmid z_{ik},$  $\vert z_{ik}\vert_\infty$ is bounded. This follows from the fact that $$
	\vert q_{ik}\vert_p\vert q_{ik}\vert_\infty=p^{-m_{ik}}.p^{m_{ik}}\vert z_{ik}\vert_\infty\leq\Vert \tilde\bq_k\Vert_p\Vert\tilde{\bq}_{k}\Vert_\infty \leq M.$$ 
	So there are finitely many $z_{ik}$.\\
	We denote $\Vert \tilde \bq_k\Vert_\infty =p^{m_k}z_k$ where $p\nmid z_k, m_k\in\Z$. If $z_k=0$ then $\tilde \bq_k=0$. Otherwise 
	$$ p^{-m_{ik}}.p^{m_k}|z_k|_\infty \leq \Vert \tilde\bq_k\Vert_p\Vert\tilde{\bq}_{k}\Vert_\infty \leq M,$$ and similarly, 
	$$ p^{-m_{k}}.p^{m_{ik}}|z_{ik}|_\infty \leq \Vert \tilde\bq_k\Vert_p\Vert\tilde{\bq}_{k}\Vert_\infty \leq M.$$ 
	So for nonzero elements $q_{ik}=p^{m_{ik}}z_{ik}$ we have $\vert m_k-m_{ik}\vert$ bounded since we have already noted that there are finitely many $z_{ik}$. Therefore $\vert q_{0k}+\bq_k.\by\vert_p\Vert \tilde{\bq}_k\Vert_\infty$ has only finitely many options. So it can only go to $0$ if the terms are identically $0$ for all but finitely many possibilities.
\end{proof}
 Now we can conclude the following relations between exponents.
 \begin{proposition}\label{same}
 	For any $\by\in\Q_p^n$ we have $$w_{p}(\by)=w(\by)+1.$$
 		\end{proposition}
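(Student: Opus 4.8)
The plan is to prove the two inequalities $w_p(\by)\le w(\by)+1$ and $w_p(\by)\ge w(\by)+1$ separately, the second being the easy direction that essentially repeats the motivational computation preceding the definition.

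First I would do the direction $w_p(\by)\ge w(\by)+1$. Given $v < w(\by)$, there are infinitely many $\tilde{\bq}\in\Z^{n+1}$ with $|q_0+\bq\cdot\by|_p\le\|\tilde{\bq}\|_\infty^{-v}$. For integer vectors $\|\bq\|_p\le 1$, so $\|\bq\|_p\|\tilde{\bq}\|_\infty\le\|\tilde{\bq}\|_\infty$, and hence
\[
|q_0+\bq\cdot\by|_p\le\|\tilde{\bq}\|_\infty^{-v}=\frac{1}{\|\tilde{\bq}\|_\infty^{v+1}\cdot\|\tilde{\bq}\|_\infty^{-1}}\le\frac{\|\bq\|_p^{v+1}}{(\|\bq\|_p\|\tilde{\bq}\|_\infty)^{v+1}}\cdot\frac{\|\tilde{\bq}\|_\infty^{v}}{\|\tilde{\bq}\|_\infty^{v}},
\]
which after simplification (taking care that one only needs $|q_0+\bq\cdot\by|_p < (\|\bq\|_p\|\tilde{\bq}\|_\infty)^{-(v+1)}\|\tilde{\bq}\|_\infty^{-1}$ up to a harmless constant and a slight decrease of $v$) shows these same $\tilde{\bq}$, viewed in $\Z[1/p]^{n+1}$, witness that $\by$ is $(v+1-\epsilon)$-$\Z[1/p]$-approximable. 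One must also check the "unbounded $\|\bq\|_p\|\tilde{\bq}\|_\infty$" requirement: for integer $\bq$ this is $\|\bq\|_p\|\tilde{\bq}\|_\infty$, and if this stayed bounded along our infinite sequence while $\|\tilde{\bq}\|_\infty\to\infty$ we'd need $\|\bq\|_p\to 0$; but then $\bq\equiv 0$ would force $q_0$ alone to be large with $|q_0|_p$ small, i.e. $q_0$ divisible by a large power of $p$, and one checks the resulting inequality cannot be satisfied infinitely often — alternatively one invokes Lemma \ref{setproperty} directly. This gives $w_p(\by)\ge w(\by)+1$.

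For the harder direction $w_p(\by)\le w(\by)+1$, suppose $\by$ is $v$-$\Z[1/p]$-approximable, so there are $\tilde{\bq}_k=(q_{0k},\bq_k)\in\Z[1/p]^{n+1}$ with $\|\bq_k\|_p\|\tilde{\bq}_k\|_\infty\to\infty$ (it is unbounded, and I would pass to a subsequence along which it tends to $\infty$) and
\[
|q_{0k}+\bq_k\cdot\by|_p<\frac{1}{(\|\bq_k\|_p\|\tilde{\bq}_k\|_\infty)^v\|\tilde{\bq}_k\|_\infty}.
\]
The idea is to clear denominators: write $\tilde{\bq}_k=p^{-j_k}\tilde{\ba}_k$ with $\tilde{\ba}_k\in\Z^{n+1}$ and $j_k\ge 0$ chosen minimal, so that $p\nmid\tilde{\ba}_k$ (not all entries divisible by $p$). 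Multiplying the inequality through by $p^{-j_k}$ gives $|a_{0k}+\ba_k\cdot\by|_p = p^{-j_k}|q_{0k}+\bq_k\cdot\by|_p$, and the key point is to relate $\|\bq_k\|_p\|\tilde{\bq}_k\|_\infty$ to $\|\tilde{\ba}_k\|_\infty$. Now $\|\tilde{\ba}_k\|_\infty = p^{j_k}\|\tilde{\bq}_k\|_\infty$ and $\|\bq_k\|_p \le p^{j_k}$ (since $p^{j_k}\bq_k = \ba_k\in\Z^n$ implies $\|\bq_k\|_p\le p^{j_k}$... wait, more carefully $\|\ba_k\|_p\le 1$ so $\|\bq_k\|_p = \|p^{-j_k}\ba_k\|_p = p^{j_k}\|\ba_k\|_p \le p^{j_k}$), and also $\|\bq_k\|_p \ge 1$ is false in general — rather $\|\bq_k\|_p\|\tilde{\bq}_k\|_\infty \le p^{j_k}\cdot p^{-j_k}\|\tilde{\ba}_k\|_\infty = \|\tilde{\ba}_k\|_\infty$. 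Then
\[
|a_{0k}+\ba_k\cdot\by|_p = p^{-j_k}|q_{0k}+\bq_k\cdot\by|_p < \frac{p^{-j_k}}{(\|\bq_k\|_p\|\tilde{\bq}_k\|_\infty)^v\|\tilde{\bq}_k\|_\infty} = \frac{1}{(\|\bq_k\|_p\|\tilde{\bq}_k\|_\infty)^v\|\tilde{\ba}_k\|_\infty}\le\frac{1}{\|\tilde{\ba}_k\|_\infty^{v+1}},
\]
where the last step uses $\|\bq_k\|_p\|\tilde{\bq}_k\|_\infty\le\|\tilde{\ba}_k\|_\infty$. Finally I would check that $\|\tilde{\ba}_k\|_\infty\to\infty$ (so we genuinely get infinitely many distinct integer solutions): since $\|\tilde{\ba}_k\|_\infty\ge\|\bq_k\|_p\|\tilde{\bq}_k\|_\infty\to\infty$, this holds. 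Hence $w(\by)\ge v$ for every $v<w_p(\by)$, giving $w(\by)\ge w_p(\by)$, i.e. $w_p(\by)\le w(\by)+1$ follows once combined with the bound $w(\by)\ge\ldots$ — actually this argument as written gives $w(\by) \ge w_p(\by)$, which is weaker than wanted; so I would need to be more careful and exploit that $\|\bq_k\|_p\|\tilde{\bq}_k\|_\infty$ is the quantity going to infinity rather than $\|\tilde{\ba}_k\|_\infty$. The refinement: writing $N_k := \|\tilde{\ba}_k\|_\infty$ and $R_k := \|\bq_k\|_p\|\tilde{\bq}_k\|_\infty$, we have $R_k\le N_k$ but we want to replace the bound by $N_k^{-(v+1)}$ only when $R_k$ is comparable to $N_k$, and otherwise extract an even better exponent; one shows $R_k\ge N_k / p^{j_k}\cdot\|\bq_k\|_p$ and tracks that $|a_{0k}+\ba_k\cdot\by|_p < R_k^{-v} N_k^{-1}$, and since $R_k\to\infty$ and $R_k\le N_k$, for any $v'<v$ we get $R_k^{-v}N_k^{-1}\le N_k^{-(v'+1)}$ for all large $k$ once we know $R_k\ge N_k^{v'/v}$... this last inequality is exactly the subtle point. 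So the main obstacle is bookkeeping the relationship between the "weighted height" $\|\bq_k\|_p\|\tilde{\bq}_k\|_\infty$ and the integer height $\|\tilde{\ba}_k\|_\infty$ after clearing $p$-denominators, and showing no loss in the exponent occurs; Lemma \ref{setproperty} is the tool that rules out the degenerate cases (e.g. $q_{0k}$ carrying all the $p$-divisibility while $|q_{0k}+\bq_k\cdot\by|_p$ is tiny), which is precisely what guarantees the clearing-of-denominators does not inflate the height beyond $R_k$ in a way that kills the exponent. I expect the proof to conclude by noting that the passage $\Z^{n+1}\hookrightarrow\Z[1/p]^{n+1}$ and the reduction back are exponent-preserving up to the single additive shift by $1$ coming from the extra factor $\|\tilde{\bq}\|_\infty$ (resp. $\|\tilde{\ba}\|_\infty$) in the denominator of \eqref{Z_S} versus \eqref{def:exp}.
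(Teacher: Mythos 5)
Your first direction, $w_p(\by)\ge w(\by)+1$, cannot be repaired: the displayed chain is invalid, and the loss is not a ``harmless constant''. From $|q_0+\bq\cdot\by|_p\le\Vert\tilde\bq\Vert_\infty^{-v}$ with $\bq\in\Z^n$ you need, to witness \eqref{Z_S} with exponent $v+1$, the bound $|q_0+\bq\cdot\by|_p<(\Vert\bq\Vert_p\Vert\tilde\bq\Vert_\infty)^{-(v+1)}\Vert\tilde\bq\Vert_\infty^{-1}$, which in the typical case $\Vert\bq\Vert_p=1$ reads $|q_0+\bq\cdot\by|_p<\Vert\tilde\bq\Vert_\infty^{-(v+2)}$: you are short by a factor $\Vert\tilde\bq\Vert_\infty^{2}$, which no constant or slight decrease of $v$ absorbs. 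Since $\Vert\bq\Vert_p\Vert\tilde\bq\Vert_\infty\le\Vert\tilde\bq\Vert_\infty$, integer witnesses for \eqref{def:exp} with exponent $v$ only yield $\Z[1/p]$ witnesses with exponent $v-1$ (this, together with Lemma \ref{setproperty} for the unboundedness, is exactly the paper's first inclusion $\mathcal{W}_v\subset\mathcal{W}_{v-1}^{p}$). Indeed the inequality you are after is false under the paper's definitions: for a.e.\ $\by$ one has $w(\by)=n+1$ while Dirichlet only gives $w_p(\by)\ge n$, and what the argument actually establishes (and what is consistent with the remark $w_p(\by)\ge n$ after Proposition \ref{same} and with Theorem \ref{columns}) is the relation with the roles of the two exponents exchanged, $w(\by)=w_p(\by)+1$; the displayed identity in the statement should be read that way.

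Your second direction stalls exactly where you flag it: $R_k^{-v}N_k^{-1}\le N_k^{-(v+1)}$ would need $R_k\ge N_k$, the reverse of what you proved, and no lower bound of the form $R_k\ge N_k^{v'/v}$ is available. The failure is genuine when all entries of $\bq_k$ are divisible by a high power of $p$: then the denominator $p^{j_k}$ comes only from $q_{0k}$, the ratio $N_k/R_k\approx\Vert\bq_k\Vert_p^{-1}$ is unbounded, and the cleared integer vector $\tilde\ba_k$ is far too long for the exponent to survive. The paper's remedy is to rescale by exactly $\Vert\bq_k\Vert_p$ rather than by $p^{j_k}$: both sides of \eqref{Z_S} are invariant under multiplying $\tilde\bq_k$ by a power of $p$, and $\tilde\bq_k'=\Vert\bq_k\Vert_p\,\tilde\bq_k$ has $\bq_k'\in\Z^n$, satisfies $|q_{0k}'+\bq_k'\cdot\by|_p<\Vert\tilde\bq_k'\Vert_\infty^{-(v+1)}$ with $\Vert\tilde\bq_k'\Vert_\infty=R_k\to\infty$, and its only defect is that $q_{0k}'$ may lie in $\Z[1/p]\setminus\Z$; but the ultrametric inequality gives $|q_{0k}'|_p\le\max(1,\Vert\by\Vert_p)$, so clearing this bounded denominator costs only an $\varepsilon$ in the exponent. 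This produces $w(\by)\ge w_p(\by)+1$, which combined with the corrected first direction gives $w(\by)=w_p(\by)+1$. So the overall architecture (two inclusions plus Lemma \ref{setproperty}) matches the paper, but one direction of your proof attacks a false inequality and the other has a wrongly oriented comparison of heights that the $\Vert\bq_k\Vert_p$-rescaling is needed to fix.
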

\begin{proof}
	Suppose $\by\in \mathcal{W}_v,$ then there exists infinitely many integer vectors $\tilde \bq=(q_0,\bq)\in \Z^{n+1}$ such that $$ 	|\bq.\by+q_0|_p<\frac{1}{(\Vert\tilde{\bq}\Vert_\infty)^v}.$$ Since $\Vert\bq\Vert_p\leq 1$ for  ${\bq}\in\Z^{n+1}$, the above inequality is the same as 
	$$ 	|\bq.\by+q_0|_p<\frac{1}{(\Vert\bq\Vert_p\Vert\tilde{\bq}\Vert_\infty)^{v-1}\Vert\tilde{\bq}\Vert_\infty}.$$ 
	Lemma \ref{setproperty} assures us that the $\Vert\bq\Vert_p\Vert\tilde{\bq}\Vert_\infty$ appearing here are unbounded when $v>1$. Hence, we have that $\mathcal{W}_v\subset \mathcal{W}_{v-1}^{p}$ when $v>1$. On the other hand, if $\by\in \mathcal{W}_v^{p}$ then there are unbounded many $\Vert\bq\Vert_p\Vert\tilde{\bq}\Vert_\infty$ such that
	\begin{equation}
	|\bq.\by+q_0|_p<\frac{1}{(||\bq||_p||\tilde{\bq}||_\infty)^v||\tilde{\bq}||_\infty},
	\end{equation}
	where $\tilde{\bq}=(q_0,\bq)\in\Z[1/p]^{n+1}$. This inequality can be rewritten as $$
		\vert\Vert\bq\Vert_p(\bq.\by+q_0)\vert_p<\frac{1}{(\Vert\bq\Vert_p\Vert\tilde{\bq}\Vert_\infty)^{v+1}}$$ for unbounded many $\Vert\bq\Vert_p\Vert\tilde{\bq}\Vert_\infty$ where $\tilde{\bq}=(q_0,\bq)\in\Z[1/p]^{n+1}$. But note that $\Vert\bq\Vert_p\bq\in \Z^{n+1}$. Hence the above criterion is the same as  $$
		\vert(\bq.\by+q_0)\vert_p<\frac{1}{(\Vert\tilde{\bq}\Vert_\infty)^{v+1}}$$ for unbounded many $\Vert\tilde{\bq}\Vert_\infty$ where $\tilde\bq=(q_0,\bq)\in\Z[1/p]\times\Z^n$. Now note that $\vert q_0\vert_p\leq\max(\Vert\by\Vert_p,1)$. So when $q_0\notin\Z$, we may multiply by $\vert q_0\vert_p>1 $ and since we have an upper bound for $\vert q_0\vert_p$ we get a $\Z$ approximation but for slightly smaller $v$. So we have for any $\varepsilon>0$
	$$
	\vert(\bq.\by+q_0)\vert_p<\frac{1}{(\Vert\tilde{\bq}\Vert_\infty)^{v-\varepsilon+1}}$$
	for unbounded many $\Vert\tilde{\bq}\Vert_\infty,$ and hence,  for infinitely many $\tilde{\bq}\in\Z^{n+1}$. Therefore $\mathcal{W}_v\subset \mathcal{W}_{v+1-\varepsilon}^{p}$. Hence the conclusion follows.
\end{proof}    
A quick observation which follows from Dirichlet's theorem and Proposition \ref{same} is that $w_{p}(\by)\geq n$ for all $\by\in\Q_p^n$.
We can define the Diophantine $\Z[1/p]$ exponent more generally for a matrix $A$ of order $m, n$ and we will need these notions later in the paper. Define 
\begin{equation}\label{def:matrixp}
w_{p}(A):=\sup\left\{v ~\biggl|\begin{aligned} &\text{ there exists unbounded  many} \Vert\bq\Vert_p\Vert\tilde{\bq}\Vert_\infty\\
& \text{ s.t. } \Vert A.\bq+\bq_0\Vert_p\leq \frac{1}{(\Vert\bq\Vert_p\Vert\tilde{\bq}\Vert_\infty)^v\Vert\tilde{\bq}\Vert_\infty}\\&
\text{ for some }\tilde{\bq}=(\bq_0,\bq)\in\Z[1/p]^m\times\Z[1/p]^n \end{aligned}\right\},
\end{equation}
and similarly the Diophantine $\Z$ exponent as 
\begin{equation}\label{def:matrixz}
w(A):=\sup\left\{v ~\biggl|\begin{aligned} &\text{ there exists  infinitely many } \tilde{\bq}=(\bq_0,\bq)\in\Z^{m+n}\\
& \text{ s.t. } \Vert A.\bq+\bq_0\Vert_p\leq \frac{1}{(\Vert\tilde{\bq}\Vert_\infty)^v}\\&
 \end{aligned}\right\}.
\end{equation}
A similar reasoning as before shows that $w_{p}(A)=w(A)+1$.

\section{Connecting Diophantine approximation and homogeneous dynamics}\label{dani}

	We will weaken the hypothesis of Lemma 2.1 of \cite{Kleinbock-exponent} with the same conclusion.
	\begin{lemma}\label{connection}
		Suppose we are given a set $E\subset \R^2$ such that 
		\begin{itemize}
			\item If the second coordinate of $E$ is bounded then the first coordinate cannot converge to $0$ unless it is $0$ ultimately, i.e. $(x_n,z_n)\in E$ such that $|z_n|$ is bounded and $x_n\to 0$ implies that $x_n=0 $ for all but finitely many $n$.
			\item $(0,z)\in E \implies (0,kz)\in E ~ ~ \text{ for infinitely many } k\in\N$. 
		\end{itemize}
		Take $a, b>0$ and $v>\frac{a}{b}$ and define 
		$$c:=\frac{bv-a}{v+1}\Leftrightarrow v=\frac{a+c}{b-c}.$$
		 As before, $p$ is a prime. Then the following are equivalent:
		\begin{enumerate}
			\item There exists $(x,z)\in E$ with arbitrarily large $|z|$ such that $$|x|\leq |z|^{-v}.$$
			\item There exists arbitrarily large $t > 0$ such that for some $(x,z)\in E\setminus\{0\} $ one has 
			$$
			\max(p^{at}|x|,p^{-bt}|z|)\leq p^{-ct}.
			$$
		\end{enumerate}
		
	\end{lemma}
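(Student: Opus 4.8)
The plan is to prove the two implications separately, using the parametrization $v = \tfrac{a+c}{b-c}$ throughout so that all algebraic manipulations become transparent. First I would record the elementary equivalence
$$|x| \leq |z|^{-v} \iff |x|\,|z|^{a/c}\cdot |z| \leq |z|^{\,a/c - v}\cdot|z|^{?},$$
but more usefully: since $v = \tfrac{a+c}{b-c}$, one checks directly that for $(x,z)$ with $z\neq 0$ the inequality $|x|\leq|z|^{-v}$ holds if and only if there is a real parameter $s$ with $p^s = |z|^{1/(b-c)}$ (so that $p^{-bs}|z| = p^{-cs}$ is forced by $|z| = p^{(b-c)s}$, giving $p^{-bs}|z| = p^{(-b + b - c)s} = p^{-cs}$) and simultaneously $p^{as}|x| \leq p^{-cs}$ (which rearranges to $|x| \leq p^{-(a+c)s} = |z|^{-(a+c)/(b-c)} = |z|^{-v}$). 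This is the content of Lemma 2.1 of \cite{Kleinbock-exponent}; the only thing to be careful about here is that $t$ is required to be positive, which forces $|z| > 1$, and that $t$ should range over a genuinely unbounded set.

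For the implication (1)$\Rightarrow$(2): given $(x,z)\in E$ with $|z|$ arbitrarily large and $|x|\leq|z|^{-v}$, I distinguish two cases. If $x\neq 0$, set $t := \tfrac{\log_p|z|}{b-c} > 0$ (positive since $|z|>1$, which we may assume as $|z|$ is unbounded); then $p^{-bt}|z| = p^{-ct}$ exactly, and $p^{at}|x| \leq p^{at}|z|^{-v} = p^{-ct}$ by the computation above, so $(x,z)$ witnesses (2) at scale $t$, and these $t$ are unbounded. If $x = 0$, then $(0,z)\in E$, and by the second hypothesis $(0,kz)\in E$ for infinitely many $k\in\N$; for each such $k$ choose $t$ with $p^{-bt}|kz| = p^{-ct}$, i.e. $t = \tfrac{\log_p|kz|}{b-c}$, and then $p^{at}\cdot 0 = 0 \leq p^{-ct}$ trivially, so again (2) holds at arbitrarily large $t$.

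For the converse (2)$\Rightarrow$(1): suppose there are arbitrarily large $t>0$ and $(x_t,z_t)\in E\setminus\{0\}$ with $\max(p^{at}|x_t|, p^{-bt}|z_t|)\leq p^{-ct}$. From $p^{-bt}|z_t|\leq p^{-ct}$ we get $|z_t|\leq p^{(b-c)t}$, and from $p^{at}|x_t|\leq p^{-ct}$ we get $|x_t|\leq p^{-(a+c)t}$. If $x_t\neq 0$ then $|z_t| = p^{(b-c)t}\cdot\bigl(p^{-bt}|z_t|/p^{-ct}\bigr)$... more directly: if $x_t\neq 0$, I would like to conclude $|x_t|\leq|z_t|^{-v}$, but this needs a \emph{lower} bound $|z_t| \geq p^{(b-c)t}$ which we do not have a priori. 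This is where the first hypothesis on $E$ enters, and it is the main obstacle: I argue that along the unbounded sequence of $t$'s, $|z_t|$ must itself be unbounded, for otherwise $|z_t|$ would be bounded while $p^{at}|x_t|\leq p^{-ct}$ forces $|x_t|\to 0$, hence by the first hypothesis $x_t = 0$ eventually, contradicting $(x_t,z_t)\in E\setminus\{0\}$ together with... (one must still rule out $z_t = 0$ with $x_t=0$, but $(x_t,z_t)\neq 0$). So $|z_t|$ is unbounded; passing to a subsequence where $|z_t|\to\infty$ and (if needed) where $x_t$ is either always zero or never zero, I handle the $x_t = 0$ branch by the second hypothesis as before (it directly gives points of the required form with unbounded second coordinate), and in the $x_t\neq 0$ branch I combine $|x_t|\leq p^{-(a+c)t}$ with $|z_t|\leq p^{(b-c)t}$, rewriting $p^{-(a+c)t} = (p^{(b-c)t})^{-v} \leq |z_t|^{-v}$ — wait, this inequality goes the wrong way since $|z_t|\leq p^{(b-c)t}$ gives $|z_t|^{-v}\geq p^{-(b-c)tv}$. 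The correct route: I should instead \emph{choose} the scale optimally. Given $x_t\neq 0$, replace $t$ by $t' := \tfrac{\log_p|z_t|}{b-c} \leq t$; then $p^{-bt'}|z_t| = p^{-ct'}$ and, since $t'\leq t$ and $at' \geq$ hmm — this monotonicity also needs care. The clean fix, which I expect to be the actual argument: the condition in (2) at scale $t$ for a \emph{fixed} nonzero $(x,z)$ is an inequality in $t$ that holds on an interval (or is monotone for large $t$), and one shows the set of valid $t$ is unbounded for $(x,z)$ iff $|x|\leq|z|^{-v}$ and $|z|>1$; I would make this precise by solving $p^{at}|x| = p^{-ct}$ and $p^{-bt}|z| = p^{-ct}$ for $t$ and comparing. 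Granting this per-point equivalence, (2) with unbounded $t$ plus the two hypotheses on $E$ (the first to push $|z|$ to infinity along the sequence, the second to absorb the degenerate $x=0$ case) yields (1). The main obstacle, then, is precisely the bookkeeping in this last paragraph: ensuring that ``arbitrarily large $t$'' for possibly-varying $(x_t,z_t)$ can be converted into ``arbitrarily large $|z|$'' for points satisfying $|x|\leq|z|^{-v}$, and this is exactly what the two structural hypotheses on $E$ are designed to make possible.
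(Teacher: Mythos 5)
Your direction (1)$\Rightarrow$(2) is fine and coincides with the paper's argument (choose $t$ with $p^{-bt}|z|=p^{-ct}$). The genuine gap is in (2)$\Rightarrow$(1), and ironically you wrote the correct step and then discarded it. From $p^{at}|x_t|\le p^{-ct}$ and $p^{-bt}|z_t|\le p^{-ct}$ one gets $|x_t|\le p^{-(a+c)t}$ and $|z_t|\le p^{(b-c)t}$; since $-v<0$, raising the \emph{upper} bound $|z_t|\le p^{(b-c)t}$ to the power $-v$ reverses the inequality and gives $|z_t|^{-v}\ge p^{-(b-c)vt}=p^{-(a+c)t}\ge |x_t|$, because $(b-c)v=a+c$. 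So $|x_t|\le |z_t|^{-v}$ follows with no lower bound on $|z_t|$ whatsoever; your worry that ``this inequality goes the wrong way'' is mistaken, and this one line is exactly how the paper argues. Having abandoned it, your substitute (``choose the scale optimally'' / a ``per-point equivalence'') is never carried out, and in fact the per-point claim as you state it is false: for a fixed $(x,z)$ with $x\ne 0$ the set of admissible $t$ is always bounded (eventually $p^{(a+c)t}|x|>1$), so ``arbitrarily large $t$'' can never be certified point by point; it must be handled with varying points, which is precisely what your sketch leaves open.

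A second, smaller error is the claim that boundedness of $|z_t|$ leads to a contradiction. It does not: $x_t=0$ with $z_t\ne 0$ bounded is perfectly consistent with $(x_t,z_t)\in E\setminus\{0\}$, so you cannot conclude that $|z_t|$ is unbounded along the sequence. The correct case analysis (the paper's) is: with $|x_n|\le|z_n|^{-v}$ already in hand for every $n$, either $\{z_n\}$ is unbounded and (1) holds outright, or $\{z_n\}$ is bounded, in which case $|x_n|\le p^{-(a+c)t_n}\to 0$ and the first hypothesis forces $x_n=0$ for all but finitely many $n$; then $(0,z_m)\in E\setminus\{0\}$ for some $m$, and the second hypothesis supplies $(0,kz_m)\in E$ for infinitely many $k$, which satisfy $|x|=0\le|kz_m|^{-v}$ with $|kz_m|$ arbitrarily large. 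So the two hypotheses on $E$ are used only in the bounded branch, not to force unboundedness of the original $z_t$'s. With these two corrections your argument closes, but as written the (2)$\Rightarrow$(1) direction is not proved.
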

	\begin{proof}
		We first show that $(1)$ implies $(2)$. There exists $(x,z) \in E$ with arbitrary large $|z|$ such that  $|x|\leq |z|^{-v}$. Define $t>0$ such that $p^{-bt}|z|=p^{-ct}$; this is possible   since $b-c>0$.  Then $$
		p^{at}|x|\leq p^{at}|z|^{-v}=p^{at}p^{(b-c)t(-v)}=p^{at}.p^{-(a+c)t}=p^{-ct}$$
		We now show that $(2)$ implies $(1)$. Accordingly, we assume that there exists a sequence of positives $\{t_n\}\to\infty $ such that 
		$$p^{at_n}|x_n|\leq p^{-ct_n} \text{ and } p^{-bt_n}|z_n|\leq p^{-ct_n}.$$
		Therefore $$|x_n|\leq p^{-(a+c)t_n}=p^{-(b-c)vt_n}\leq |z_n|^{-v}.$$
		If $\{z_n\}$ is unbounded then $(1)$ is proved. Suppose then, that $\{z_n\}$ is bounded. Since $\{x_n\}\to 0$, by the hypothesis $x_n = 0$ for all but finitely many $n$. Therefore we have that $(0, z_m)\in E\setminus \{0\}$. By the hypothesis, $(0,kz_m)\in E\setminus\{0\}$ for inifinitely  many $k\in \N$, which will satisfy $(1)$.		 
	\end{proof}
\begin{proposition}\label{connection1}
	For $\by\in\Q_p^n$, the following are equivalent 
	\begin{enumerate}
		\item $\mathbf{y} \in \mathcal{W}^{p}_v$, where $v>{n}$,
		\item there exists arbitrarily large $t>0$ such that $$
		\max\{p^{\frac{nt}{n+1}}|q_0+\bq.\by|_p\Vert\tilde{\bq}\Vert_\infty, p^{-\frac{t}{n+1}}\Vert\bq\Vert_p\Vert\tilde{\bq}\Vert_\infty \}\leq p^{-ct}$$ where $a=\frac{n}{n+1}, b=\frac{1}{n+1}, c=\frac{v-n}{(n+1)(v+1)}\Leftrightarrow v=\frac{n(1+c)+c}{1-(n+1)c} \text{ and } \tilde\bq =(q_0,\bq)\in\Z[1/p]^{n+1}$.
		\end{enumerate}
	\begin{proof}
		We will apply Lemma \ref{connection} to the set 
		$$E= \left\{ |q_0+\bq.\by|_p\Vert\tilde{\bq}\Vert_\infty,\Vert\bq\Vert_p\Vert\tilde{\bq}\Vert_\infty ~\big | ~ \tilde{\bq}=(q_0,\bq)\in\Z[1/p]^{n+1}\right\}.$$ By Lemma \ref{setproperty}, this set satisfies the first hypothesis of Lemma \ref{connection}. Suppose $$(0,\Vert\bq\Vert_p\Vert\tilde{\bq}\Vert_\infty)\in E,$$ then $ |q_0+\bq.\by|_p\Vert\tilde{\bq}\Vert_\infty=0$. For any $u\in\N$ such that $p\nmid u$, 
$$|u.q_0+u\bq.\by|_p\Vert u\tilde{\bq}\Vert_\infty=0$$
and $$\Vert u\bq\Vert_p\Vert u\tilde{\bq}\Vert_\infty=u\Vert\bq\Vert_p\Vert\tilde{\bq}\Vert_\infty$$
implies that $(0,u\Vert\bq\Vert_p\Vert\tilde{\bq}\Vert_\infty)\in E$, giving the second hypothesis of Lemma \ref{connection}. Now this Proposition follows directly from Lemma \ref{connection}.
	\end{proof}
\end{proposition}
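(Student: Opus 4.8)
The plan is to apply Lemma \ref{connection} directly to the set
$$E= \left\{ \left(|q_0+\bq\cdot\by|_p\Vert\tilde{\bq}\Vert_\infty,\ \Vert\bq\Vert_p\Vert\tilde{\bq}\Vert_\infty\right) ~\big|~ \tilde{\bq}=(q_0,\bq)\in\Z[1/p]^{n+1}\right\},$$
so the bulk of the work is verifying that this $E$ satisfies the two hypotheses of Lemma \ref{connection} and then matching the numerology $(a,b,c,v)$. First I would invoke Lemma \ref{setproperty} verbatim: it says precisely that if the second coordinate is bounded and the first coordinate tends to $0$, then the first coordinate is eventually $0$, which is the first bullet of Lemma \ref{connection}. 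Second, for the scaling hypothesis, I would observe that if $(0, \Vert\bq\Vert_p\Vert\tilde{\bq}\Vert_\infty)\in E$ — i.e. $|q_0+\bq\cdot\by|_p\Vert\tilde{\bq}\Vert_\infty=0$ — then multiplying $\tilde{\bq}$ by any $u\in\N$ with $p\nmid u$ leaves the first coordinate $0$ (since $|u q_0 + u\bq\cdot\by|_p \Vert u\tilde{\bq}\Vert_\infty = |u|_p |q_0+\bq\cdot\by|_p \cdot u\Vert\tilde{\bq}\Vert_\infty$, and $|u|_p=1$), while it scales the second coordinate by exactly $u$ (since $\Vert u\bq\Vert_p = \Vert\bq\Vert_p$ and $\Vert u\tilde{\bq}\Vert_\infty = u\Vert\tilde{\bq}\Vert_\infty$). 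Hence $(0, u\Vert\bq\Vert_p\Vert\tilde{\bq}\Vert_\infty)\in E$ for infinitely many $u$, giving the second bullet.

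Once both hypotheses are in hand, I would plug into Lemma \ref{connection} with $a=\tfrac{n}{n+1}$ and $b=\tfrac{1}{n+1}$. The condition $v>a/b = n$ is exactly the standing hypothesis $v>n$, so the lemma applies, and with $c:=\tfrac{bv-a}{v+1}$ one computes $c = \tfrac{v/(n+1) - n/(n+1)}{v+1} = \tfrac{v-n}{(n+1)(v+1)}$, which is the value of $c$ stated in the proposition; the equivalent form $v = \tfrac{a+c}{b-c}$ rearranges to $v=\tfrac{n(1+c)+c}{1-(n+1)c}$ after clearing denominators. Statement $(1)$ of Lemma \ref{connection} — existence of $(x,z)\in E$ with arbitrarily large $|z|$ and $|x|\leq|z|^{-v}$ — is, unwinding the definition of $E$, precisely the statement that $\by\in\mathcal{W}^p_v$ (the inequality $|q_0+\bq\cdot\by|_p\Vert\tilde\bq\Vert_\infty \leq (\Vert\bq\Vert_p\Vert\tilde\bq\Vert_\infty)^{-v}$ is (\ref{Z_S}) after dividing by $\Vert\tilde\bq\Vert_\infty$, and "arbitrarily large $|z|$" matches "unbounded $\Vert\bq\Vert_p\Vert\tilde\bq\Vert_\infty$"). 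Statement $(2)$ of the lemma is verbatim statement $(2)$ of the proposition with $x = |q_0+\bq\cdot\by|_p\Vert\tilde\bq\Vert_\infty$ and $z = \Vert\bq\Vert_p\Vert\tilde\bq\Vert_\infty$.

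I do not expect any serious obstacle here: the proposition is essentially a bookkeeping translation, and all the substantive content has already been isolated in Lemma \ref{setproperty} (the hard ultrametric-combinatorial step about finitely many $p$-free parts) and Lemma \ref{connection} (the real-variable substitution $t\mapsto$ the value making $p^{-bt}|z|=p^{-ct}$). The one point that requires a little care is the interface between "$v$-$\Z[1/p]$-approximable" as defined via (\ref{Z_S}) — which has the extra factor $\Vert\tilde\bq\Vert_\infty$ on the right — and the clean form $|x|\leq|z|^{-v}$ needed for Lemma \ref{connection}; I would make explicit that dividing both sides of (\ref{Z_S}) by $\Vert\tilde\bq\Vert_\infty$ converts one into the other, with $x,z$ as above. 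With that identification the proposition follows directly from Lemma \ref{connection}.
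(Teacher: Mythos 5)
Your proposal is correct and follows essentially the same route as the paper: apply Lemma \ref{connection} to the set $E$, use Lemma \ref{setproperty} for the first hypothesis, and scale by $u\in\N$ with $p\nmid u$ for the second, then match $a=\tfrac{n}{n+1}$, $b=\tfrac{1}{n+1}$, $c=\tfrac{v-n}{(n+1)(v+1)}$. The only nitpick is cosmetic: passing from (\ref{Z_S}) to $|x|\le|z|^{-v}$ is done by \emph{multiplying} both sides by $\Vert\tilde{\bq}\Vert_\infty$, not dividing.
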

\section{Quantitative Nondivergence for flows on homogeneous spaces}\label{margulis}
We begin this section by stating Theorem $2.1$ of \cite{Kleinbock-exponent}. This theorem is an improvement of an original theorem of Kleinbock and Margulis (\cite{KM}). This improvement was the main tool in D. Kleinbock's approach to studying Diophantine exponents of subspaces and their nondegenerate submanifolds. We will use a $p$-adic version of the Theorem, which in turn constitutes an improvement of the nondivergence theorem in \cite{KT}. Nondivergence estimates for flows on homogeneous spaces have a rich history, we refer the reader to \cite{Kleinbock-exponent} and the references therein. 
		\begin{theorem}\label{QND-original} Let 
		${k},N\in
		\Z_+$ and
		$C,\alpha,D  > 0$.
		%, and let $b\ge 1$ be defined as in {\rm (2.7)}.
		Suppose that we are given  an %proper
		$N$-Besicovitch metric space $X$,
		% with $\supp\mu = X$,
		a weighted poset $(\mathcal{B}, \eta)$, a ball $B = B(x,r)$ in
		$X$,  a measure 
		$\mu$ which is $D$-Federer  on $\tilde B= B\big(x,3^mr\big)$, and    a mapping
		$\psi:\mathcal{B}\to C(\tilde B)$, $s\mapsto \psi_s$,   such that the
		following holds:
		
		\begin{enumerate}
			\item $ \ell(\mathcal{B}) \le {k}$;
			\item $\forall\,s\in \mathcal{B}\,,\quad \psi_s$ is $(C,\alpha)$\ on $\tilde B$ with
			respect to
			$\mu$;
			%\item $\forall\,s\in \frak P\,,\quad \psi_s$ has
			%$b$-bounded jumps on  $B$;
			\item $\forall\,s\in \mathcal{B}\,,\quad\|\psi_s\|_{\mu,B} \ge\eta(s)
			$;
			\item %$\forall\,\vre > 0\
			$\forall\,y\in \tilde
			B \,\cap\,\supp\mu,\quad\#\{s\in \mathcal{B}\bigm|
			|\psi_s(y)| < \eta(s)\} < \infty$.
		\end{enumerate}
		Then
		$
		\forall\,\varepsilon> 0$ one has
		$$
		\mu\big(B\smallsetminus \Phi(\varepsilon,
		{\mathcal{B}})\big) \le {k}C
		\big(ND^2\big)^{k}
		\varepsilon^\alpha
		\mu(B)\,.
		$$
	\end{theorem}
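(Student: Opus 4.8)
The statement is purely measure-theoretic and combinatorial: it involves only an $N$-Besicovitch metric space, a $D$-Federer measure, $(C,\alpha)$-good functions and a weighted poset, with no reference to the ground field. The plan is therefore to run the proof of \cite[Theorem 2.1]{Kleinbock-exponent} in this abstract framework verbatim; the $p$-adic content enters only later, when the theorem is applied to concrete maps and one invokes Corollary \ref{affgood} to verify the $(C,\alpha)$-good hypothesis. The proof is by induction on an integer $k\ge\ell(\mathcal{B})$. Recall that $y\notin\Phi(\varepsilon,\mathcal{B})$ precisely when the set $\{s\in\mathcal{B}:|\psi_s(y)|<\varepsilon\,\eta(s)\}$ of ``$\varepsilon$-dangerous'' elements at $y$ fails to be a chain in $\mathcal{B}$. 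When $\ell(\mathcal{B})=0$, i.e.\ $\mathcal{B}=\varnothing$, the set $B\setminus\Phi(\varepsilon,\mathcal{B})$ is empty and there is nothing to prove; discarding a $\mu$-null set, we may also assume throughout that every relevant point lies in $\supp\mu$.

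\textbf{Inductive step.} Assume the bound for all weighted posets of length $\le k-1$ and suppose $\ell(\mathcal{B})\le k$; fix $\varepsilon>0$. For each $x\in(B\setminus\Phi(\varepsilon,\mathcal{B}))\cap\supp\mu$, hypothesis (4) makes the dangerous set at $x$ finite, and it is not a chain; pick a minimal element $s_x$ of this set and then let $B_x$ be the \emph{largest} ball centred at $x$ for which $\|\psi_{s_x}\|_{\mu,B_x}\ge\eta(s_x)$ still holds. Hypothesis (3) ensures such a ball exists, and the role of the dilate $\tilde B=B(x,3^mr)$ in the statement is precisely to guarantee that $B_x$ can be taken inside $\tilde B$, where $\mu$ is $D$-Federer so that the measures of these dilated balls stay controlled. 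On $B_x$ two facts are used. First, since $\psi_{s_x}$ is $(C,\alpha)$-good and $\|\psi_{s_x}\|_{\mu,B_x}\ge\eta(s_x)$,
\[
\mu\big(\{y\in B_x:\ |\psi_{s_x}(y)|<\varepsilon\,\eta(s_x)\}\big)\le C\,\varepsilon^{\alpha}\,\mu(B_x).
\]
Second, the sub-poset $\mathcal{B}^{>s_x}:=\{t\in\mathcal{B}:t>s_x\}$ has length $\le k-1$ and, by the maximality in the choice of $(s_x,B_x)$, inherits hypotheses (1)--(4) on $B_x$ with the same constants, together with the containment $\Phi(\varepsilon,\mathcal{B})\cap B_x\supseteq\{|\psi_{s_x}|\ge\varepsilon\eta(s_x)\}\cap\Phi(\varepsilon,\mathcal{B}^{>s_x})\cap B_x$. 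Applying the inductive hypothesis to $\mathcal{B}^{>s_x}$ and adding the displayed estimate gives
\[
\mu\big(B_x\setminus\Phi(\varepsilon,\mathcal{B})\big)\le\big(1+(k-1)(ND^2)^{k-1}\big)C\,\varepsilon^{\alpha}\,\mu(B_x)\le k\,C\,(ND^2)^{k-1}\,\varepsilon^{\alpha}\,\mu(B_x).
\]

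\textbf{Assembling the estimate.} The balls $\{B_x\}$ cover $(B\setminus\Phi(\varepsilon,\mathcal{B}))\cap\supp\mu$ and are centred in $\supp\mu$, so the $N$-Besicovitch property extracts a countable subfamily $\{B_i\}$ with $\sum_i\mathbf{1}_{B_i}\le N$. Since each $B_i\subseteq\tilde B$, we get $\sum_i\mu(B_i)\le N\mu(\tilde B)$, and iterating the $D$-Federer inequality on $\tilde B$ bounds $\mu(\tilde B)\le D^2\mu(B)$. Hence
\[
\mu\big(B\setminus\Phi(\varepsilon,\mathcal{B})\big)\le\sum_i\mu\big(B_i\setminus\Phi(\varepsilon,\mathcal{B})\big)\le k\,C\,(ND^2)^{k-1}\,\varepsilon^{\alpha}\sum_i\mu(B_i)\le k\,C\,(ND^2)^{k}\,\varepsilon^{\alpha}\,\mu(B),
\]
as required.

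\textbf{Main difficulty.} The delicate part is entirely in the inductive step: choosing the pair $(s_x,B_x)$ so that the reduced poset $\mathcal{B}^{>s_x}$ genuinely inherits hypotheses (1)--(4) on $B_x$ with the \emph{unchanged} constants $C,\alpha,D$, and verifying the set-theoretic containment relating $\Phi(\varepsilon,\mathcal{B})$ to $\Phi(\varepsilon,\mathcal{B}^{>s_x})$ on $B_x$ --- this is where the chain structure defining $\Phi$ and the minimality of $s_x$ among dangerous elements are genuinely used, and where the precise dilation factor $3^m$ (chosen large enough that the Federer property remains applicable at every scale arising in the $k$-fold recursion) has to be pinned down. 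In the ultrametric situation of ultimate interest $X$ is in fact $1$-Besicovitch and balls are nested, which trivialises the covering step; but the argument above is written so as not to rely on this, matching the statement exactly as given.
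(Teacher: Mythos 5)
The paper itself does not prove this statement: it is quoted verbatim from Kleinbock \cite{Kleinbock-exponent} (Theorem 2.1 there), so there is no internal proof to compare against, and your proposal must stand on its own; it does not. The first problem is your unpacking of $\Phi(\varepsilon,\mathcal{B})$. In Kleinbock's formalism (and this is exactly how the present paper uses $\Phi$ in the proof of Theorem \ref{QND}), $\Phi(\varepsilon,\mathcal{B})$ is the set of $(\varepsilon,\mathcal{B})$-\emph{marked} points: those $y$ for which there exists a chain (flag) $\mathcal{F}\subset\mathcal{B}$ with $\varepsilon\,\eta(s)\le|\psi_s(y)|\le\eta(s)$ for all $s\in\mathcal{F}$ and $|\psi_t(y)|\ge\eta(t)$ for every $t$ comparable with all elements of $\mathcal{F}$. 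Your claim that $y\notin\Phi(\varepsilon,\mathcal{B})$ precisely when $\{s:|\psi_s(y)|<\varepsilon\eta(s)\}$ fails to be a chain is false in both directions: for a one-element poset, a point with $|\psi_s(y)|<\varepsilon\eta(s)$ is unmarked although its dangerous set is trivially a chain, while a marked point can have several mutually incomparable dangerous elements (each merely incomparable with some member of the flag). The only implication available, and the correct starting point of the induction, is that an unmarked point admits at least one dangerous element.

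The more serious gap is in the inductive step, which you yourself flag as ``the delicate part'' but do not carry out. The choice of $B_x$ as the \emph{largest} ball with $\|\psi_{s_x}\|_{\mu,B_x}\ge\eta(s_x)$ is incoherent: this norm is monotone under enlarging the ball, so once the inequality holds it holds for all larger balls, and in any case the argument needs the opposite control --- on $B_x$ one must also have $|\psi_{s_x}(y)|\le\eta(s_x)$, otherwise $s_x$ cannot be adjoined to a flag for $y$; this forces $B_x$ to be essentially the critical ball on which the norm first reaches $\eta(s_x)$. Similarly, taking $s_x$ minimal among elements dangerous \emph{at} $x$ gives no control of $|\psi_t(y)|$ for $t<s_x$ at other points $y\in B_x$, yet precisely such control is needed for the asserted containment $\{|\psi_{s_x}|\ge\varepsilon\eta(s_x)\}\cap\Phi(\varepsilon,\mathcal{B}^{>s_x})\cap B_x\subset\Phi(\varepsilon,\mathcal{B})$; in Kleinbock's argument this is obtained by selecting $s_x$ through maximality of critical radii, not minimality in the poset. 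Finally, hypothesis (3) is an assumption about the fixed ball $B$, so it is not automatically inherited by $\mathcal{B}^{>s_x}$ on the smaller ball $B_x$ ``with the same constants''; verifying (1)--(4) for the reduced data and fitting the $k$ nested dilations inside $B(x,3^{k}r)$ is the actual content of the proof, and it is missing. As written, the proposal is an outline of the known strategy in which every crucial verification is asserted rather than proved, and two of the explicit choices made would break the argument.
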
 
	In the following discussion we assume,
	\begin{itemize}
		\item $\mathcal D$ is an integral domain, that is, a  commutative
		ring with
		$1$ and without zero divisors;
		\item $K$ is the quotient field of $\mathcal D$;
		\item  ${\mathcal R}$ is a commutative ring containing
		${K}$ as a subring.
	\end{itemize}	
	The following Theorem is an improvement of Theorem 6.3 of \cite{KT} using the improved quantitative nondivergence i.e. Theorem \ref{QND-original} of D. Kleinbock. We refer the reader to loc.cit. for the definition of norm-like functions. 
	\begin{theorem}\label{QND}
		Let $X$ be a metric space,
		$\mu$ a  uniformly Federer measure  on $X$, and let ${\mathcal D}\subset {K}
		\subset {\mathcal R}$ be as above,
		${\mathcal R}$ being a topological ring. For  $m\in \N$, let  a
		ball $B = B(x_0,r_0)\subset X$ and a continuous map $h:\tilde B \to
		\GL(m,{\mathcal R})$
		be given, where $\tilde B$ stands for $B(x_0,3^mr_0)$. Also let $\nu$ be a
		norm-like function on $\mathcal M({\mathcal R},{\mathcal D},m)$.
		%, with the additional assumption
		%that
		%
		%\roster
		%\item"(N3)"  for every submodule $\Delta$ of ${\Cal D}^m$, the function
		%$ \GL(m,{\Cal R})\to \br_+$, $g\mapsto \nu(g\Delta)$, is
		%continuous.
		%\endroster
		%
		%Denote by $\frak P$ the set of all
		%nonzero primitive submodules  of
		%${\Cal D}^m$, and  f
		For any $\Delta\in \mathcal P({\mathcal D},m)$ denote by $\psi_\Delta$ the
		function
		$x\mapsto \nu\big(h(x)\Delta\big)$ on  $\tilde B$.
		Now suppose for some
		$C,\alpha > 0$  one has
		
		{\rm(i)} for every $\Delta\in \mathcal P({\mathcal D},m)$, the function $\psi_\Delta$
		is $(C,\alpha)$ on
		$\tilde B$  with respect to
		$\mu$,
		
		{\rm(ii)}  for every $\Delta\in \mathcal  P({\mathcal D},m)$, $\|\psi_\Delta\|_{\mu,B}
		\ge \rho^{\rk\Delta}$,
		
		{\rm(iii)}   $ \forall\,x\in \tilde
		B \,\cap\,\supp\mu,\quad\#\big\{\Delta\in\mathcal P({\mathcal D},m)\bigm|
		\psi_\Delta(x) <
		\rho \big\} < \infty. $
		
		\noindent Then %, with $b$ defined as in {\rm (2.7)},
		for any  positive $\varepsilon\le \rho$ one has
		$$
		\mu\left(\left\{x\in B \left| \nu \big(h(x)\gamma\big) <  \frac{\varepsilon}{C_{\nu}}
		\text{ for \ }\text{some }\gamma\in {\mathcal D}^m\setminus \{0\}
		\right. \right\}\right)\le mC
		\big(N_{X}D_{\mu}^2\big)^m
		\left(\frac\varepsilon\rho \right)^\alpha
		\mu(B).
		$$
		%where $\const(d,k) = k(3^dN_d)^k$.
	\end{theorem}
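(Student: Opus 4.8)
The plan is to derive Theorem~\ref{QND} from Theorem~\ref{QND-original} by setting up the correct weighted poset and checking that the three numbered hypotheses of the latter specialize to the three assumptions (i)--(iii) here. First I would take $\mathcal{B} = \mathcal{P}(\mathcal{D},m)$, the poset of all primitive submodules (ordered by inclusion), and define the weight function by $\eta(\Delta) = \rho^{\rk \Delta}$; since submodule chains in a rank-$m$ free module have length at most $m$, we get $\ell(\mathcal{B}) \le m$, giving hypothesis (1) with $k = m$. The map $\psi$ is the one defined in the statement, $s = \Delta \mapsto \psi_\Delta$, $\psi_\Delta(x) = \nu(h(x)\Delta)$, which lies in $C(\tilde B)$ by continuity of $h$ and of the norm-like function. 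Then hypothesis (2) is precisely assumption (i), hypothesis (3) is precisely assumption (ii) once we observe $\eta(\Delta) = \rho^{\rk \Delta}$, and hypothesis (4) is precisely assumption (iii). Applying Theorem~\ref{QND-original} with these choices (and $N = N_X$, $D = D_\mu$) yields, for every $\varepsilon > 0$,
$$
\mu\big(B \smallsetminus \Phi(\varepsilon, \mathcal{B})\big) \le mC (N_X D_\mu^2)^m \varepsilon^\alpha \mu(B),
$$
where $\Phi(\varepsilon,\mathcal{B})$ is the set of points $x \in B$ at which $\psi_\Delta(x) \ge \varepsilon \eta(\Delta)$ for all $\Delta$ with $\rk \Delta \le$ some threshold (the precise statement of $\Phi$ from \cite{Kleinbock-exponent}).

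Second I would translate the complement $B \smallsetminus \Phi(\varepsilon, \mathcal{B})$ into the ``$\nu(h(x)\gamma)$ small for some $\gamma \in \mathcal{D}^m \setminus \{0\}$'' event appearing in the conclusion. The point is the standard reduction, used already in \cite{KM} and \cite{KT}: a single primitive vector $\gamma \in \mathcal{D}^m \setminus \{0\}$ generates a rank-one primitive submodule $\Delta_\gamma$, and up to the multiplicative constant $C_\nu$ built into the norm-like function one has $\nu(h(x)\gamma) \asymp \nu(h(x)\Delta_\gamma)$; so if $\nu(h(x)\gamma) < \varepsilon/C_\nu$ then $x \notin \Phi(\varepsilon, \mathcal{B})$ because the rank-one constraint $\psi_{\Delta_\gamma}(x) \ge \varepsilon \rho$ fails (here we use $\varepsilon \le \rho$ so that $\varepsilon \rho \ge \varepsilon^{1}\cdot\varepsilon$ sits in the right range, and that a general $\gamma$ is a $\mathcal{D}$-multiple of a primitive one, which only helps). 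Hence
$$
\left\{x \in B : \nu(h(x)\gamma) < \tfrac{\varepsilon}{C_\nu} \text{ for some } \gamma \in \mathcal{D}^m \setminus \{0\}\right\} \subseteq B \smallsetminus \Phi(\varepsilon, \mathcal{B}),
$$
and combining with the measure bound above, replacing the bare $\varepsilon^\alpha$ by $(\varepsilon/\rho)^\alpha$ (legitimate since $\varepsilon \le \rho$, so $\varepsilon^\alpha \le (\varepsilon/\rho)^\alpha$), gives exactly the asserted inequality.

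I expect the main obstacle to be bookkeeping rather than conceptual: one must make sure the $p$-adic / $\mathcal{R}$-module formalism of norm-like functions $\nu$ on $\mathcal{M}(\mathcal{R},\mathcal{D},m)$ from \cite{KT} is compatible with the ``weighted poset'' formalism of \cite{Kleinbock-exponent} (which is phrased over $\mathbb{R}$), in particular that the constant $C_\nu$ and the comparison between $\nu$ evaluated on a primitive vector versus on the submodule it spans are exactly as in Theorem~6.3 of \cite{KT}, so that the substitution is verbatim. A secondary point is verifying the finiteness hypothesis (4)/(iii) really does transfer: it holds here by assumption, but one should note it is exactly the hypothesis that makes $\Phi(\varepsilon,\mathcal{B})$ well-defined in the non-discrete setting. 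Once these identifications are made, no further computation is needed — the theorem is a direct corollary, the only genuine improvement over Theorem~6.3 of \cite{KT} being the sharper combinatorial constant $mC(N_X D_\mu^2)^m$ inherited from Kleinbock's Theorem~\ref{QND-original} in place of the weaker constant in \cite{KT}.
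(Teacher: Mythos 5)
There is a genuine gap, and it sits exactly where the real content of this theorem lies. Your step 1 (take the weighted poset $\mathcal{B}=\mathcal P(\mathcal D,m)$ with $\eta(\Delta)=\rho^{\rk\Delta}$, check $\ell(\mathcal B)\le m$ and hypotheses (1)--(4), apply Theorem \ref{QND-original}) is the same setup as the paper. But your step 2 misuses the conclusion of Theorem \ref{QND-original}: the set $\Phi(\varepsilon,\mathcal B)$ is \emph{not} the set of $x$ with $\psi_\Delta(x)\ge\varepsilon\eta(\Delta)$ for all $\Delta$ (of any rank); membership in $\Phi$ only provides a \emph{marked flag} $\mathcal F_x$ with two-sided bounds $\tfrac{\varepsilon}{\rho}\eta(\Delta)\le\psi_\Delta(x)\le\eta(\Delta)$ along the flag and lower bounds for submodules comparable to it. A given nonzero $\gamma$ generates a rank-one primitive submodule that in general is \emph{not} comparable to $\mathcal F_x$, so you get no bound on $\psi_{\Delta_\gamma}(x)$ at all, and the ``standard reduction'' you defer to \cite{KM}, \cite{KT} is precisely the missing argument: one must locate $\gamma\in\Delta_i\setminus\Delta_{i-1}$ between consecutive flag elements, form $\Delta'=\mathcal D\Delta_{i-1}+\mathcal D\gamma$ and its primitive hull $\Delta=K\Delta'\cap\mathcal D^m$ (which \emph{is} comparable to the flag), and use the norm-like property (N2), $\nu(h(x)\Delta')\le C_\nu\,\nu(h(x)\Delta_{i-1})\,\nu(h(x)\gamma)$, together with the \emph{upper} bound $\nu(h(x)\Delta_{i-1})\le\rho^{\rk\Delta_{i-1}}$ and the \emph{lower} bound $\nu(h(x)\Delta)\ge\tfrac{\varepsilon}{\rho}\rho^{\rk\Delta_{i-1}+1}$ to conclude $\nu(h(x)\gamma)\ge\varepsilon/C_\nu$. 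This is the paper's whole proof; it is not bookkeeping.

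Your numerology is also off in a way that cannot be patched afterwards. You apply Theorem \ref{QND-original} at level $\varepsilon$ and then replace $\varepsilon^\alpha$ by $(\varepsilon/\rho)^\alpha$ at the end, but the inclusion you would need, $\{x: \nu(h(x)\gamma)<\varepsilon/C_\nu \text{ for some }\gamma\ne 0\}\subseteq B\smallsetminus\Phi(\varepsilon,\mathcal B)$, is false in general when $\rho<1$: running the flag argument at level $\varepsilon$ only yields $\nu(h(x)\gamma)\ge\varepsilon\rho/C_\nu$ on $\Phi(\varepsilon,\mathcal B)$, because the rank jump from $\Delta_{i-1}$ to $\Delta$ costs one factor of $\rho$ (indeed a point whose marked flag has a rank-one element with $\nu(h(x)\Delta_1)=\varepsilon\rho$ already violates your inclusion). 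The paper instead applies Theorem \ref{QND-original} at level $\varepsilon/\rho$, and the rank-dependent weights $\eta(\Delta)=\rho^{\rk\Delta}$ are exactly what makes the quotient $\nu(h(x)\Delta)/\nu(h(x)\Delta_{i-1})$ come out as $\varepsilon$ rather than $\varepsilon\rho$; this interplay between the $\varepsilon/\rho$ level and hypothesis (ii) is the improvement over Theorem 6.3 of \cite{KT}, and it is skipped in your proposal. (Your identification of hypothesis (4) with (iii) also silently uses $\rho\le 1$, which is harmless but should be said.)
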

	\begin{proof}
		Take $\eta(\Delta)=\rho^{rk(\Delta)}$. We want to show that 
		$$\Phi\left(\frac{\varepsilon}{\rho},
		{\mathcal{B}}\right) \subset \bigg\{\ x\in B\left| \nu(h(x)\gamma)\geq \frac{\varepsilon}{C_{\nu}}  ,~\forall~ \gamma\ \in\mathcal{D}^m\setminus\{0\}\right.\bigg\}.$$
		Take $x\in \Phi\big(\frac{\varepsilon}{\rho},
		{\mathcal{B}}\big) \cap B$, so there exists a flag $\mathcal F_{x}$.
		Let $$\{0\} =
		\Delta_0 \subsetneq \Delta_1 \subsetneq\dots \subsetneq\Delta_l =
		{\mathcal D}^m$$ be all the elements of $\mathcal F_x \cup
		\big\{\{0\},{\mathcal D}^m\big\}$ such that the following conditions are satisfied:
		\begin{enumerate}
			\item\label{M1} $\frac{\varepsilon}{\rho}\eta(\Delta)\leq |\psi_{\Delta}(x)|\leq \eta(\Delta) \ \ \forall \delta \in \mathcal{F}_x.$\\
			\item\label{M2} $|\psi_{\Delta}(x)|\geq \eta(\Delta)  \ \ \forall \Delta \in \mathcal B(\mathcal F_x).$
		\end{enumerate}

		Pick any $\gamma\in {\mathcal D}^m\setminus\{0\}$. Then there
		exists $i$, $1 \le i \le l$, such that $\gamma\in \Delta_i\setminus
		\Delta_{i-1}$. Now $\gamma\notin \Delta_{i-1}=\mathcal R\Delta_{i-1}\cap \mathcal D^m$ implies that $\gamma\notin \mathcal R\Delta_{i-1}$ since $\Delta_{i-1}$ is primitive, hence
		$g\gamma\notin g{\mathcal R}\Delta_{i-1} = {\mathcal R}g\Delta_{i-1}$ for any
		$g\in\GL(m,{\mathcal R})$.
		Therefore, if one defines
		$\Delta' := {\mathcal D}\Delta_{i-1} + {\mathcal D}\gamma$, in view of  (N2)
		% add definition of norm like function 
		one has
		$$\nu\big(h(x)\Delta'\big) \le C_\nu
		\nu\big(h(x)\Delta_{i-1}\big)\nu\big(h(x)\gamma\big).
		$$
		Further, let $\Delta:= {K}\Delta'\,\cap\, {\mathcal D}^m$. It is a primitive
		submodule containing $\Delta'$ and of rank equal to $\rk(\Delta')$,
		so, by (N1),
		$$\nu\big(h(x)\Delta\big) \le \nu\big(h(x)\Delta'\big).
		$$
		Moreover, it is also contained in $\Delta_i$ and contains $\Delta_{i-1}$, since
		$$
		\Delta_{i-1} = {K}\Delta_{i-1}\,\cap\, {\mathcal D}^m\subset \Delta = {K}\Delta'\,\cap\, {\mathcal D}^m =  {K}\Delta\,\cap\, {\mathcal D}^m \subset
		{K}\Delta_i\,\cap\, {\mathcal D}^m = \Delta_i\,.
		$$
		Therefore it is comparable to any element of
		$\mathcal F_x$, i.e.~belongs to $\mathcal F_x \cup \mathcal P(\mathcal F_x)$.
		Then one can use
		properties (\ref{M1}) and (\ref{M2}) above to deduce
		that
		\begin{align*}
		\nu(h(x)\Delta))=|\psi_{\Delta}(x)| = \nu\big(h(x)\Delta\big) &\ge \min\left(\frac{\varepsilon}{\rho}\rho^{\rk(\Delta)},
		\rho^{\rk(\Delta)}\right) \\ &=
		\frac{\varepsilon}{\rho}\rho^{\rk(\Delta)}=\frac{\varepsilon}{\rho}\rho^{\rk(\Delta_{i-1})+1} 
		=\varepsilon\rho^{\rk(\Delta_{i-1})}\,,
		\end{align*}
		and then, in view of (6.5) and (6.6), conclude that
		\begin{align*}
		\nu\big(h(x)\gamma\big) \ge
		{\nu\big(h(x)\Delta^\prime\big)}/{C_\nu \nu\big(h(x)\Delta_{i-1}\big)}&\ge{\nu\big(h(x)\Delta\big)}/{C_\nu \nu\big(h(x)\Delta_{i-1}\big)}  \\&\ge
		\varepsilon\rho^{\rk(\Delta_{i-1})}/C_\nu\rho^{\rk(\Delta_{i-1})}  \\&\ge
		\varepsilon/C_{\nu}.
		\end{align*}

	\end{proof}

	We recall Lemma 8.1 from \cite{KT}. It is proved for an arbitrary, finite set of places $S$ of $\Q$. We only need it for the case $S = \{\infty, p\}$ for a prime $p$. 
	
	\begin{lemma} The function  $\nu:\mathcal M(\Q_S,{\Z}_S,m)\to\R_+$
		given by $\,\nu(\Delta) = cov(\Delta)$, with $\,cov(\cdot)$ as
		defined earlier
		is norm-like, with $C_\nu = 1$.
	\end{lemma}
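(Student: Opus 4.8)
The plan is to prove the lemma about the norm-like function $\nu(\Delta) = \cov(\Delta)$ on $\mathcal{M}(\Q_S, \Z_S, m)$ for $S = \{\infty, p\}$. First I would recall the relevant definitions: for a discrete $\Z_S$-submodule $\Delta$ of $\Q_S^m$ of rank $j$, the quantity $\cov(\Delta)$ should be interpreted as the covolume of $\Delta$ inside the $\R$-span (resp.\ $\Q_p$-span) of its real and $p$-adic components, computed via the product of the covolumes at each place; equivalently, identifying $\Delta$ with a pure wedge $\be_{1} \wedge \cdots \wedge \be_{j}$ in $\bigwedge^{j} \Q_S^m$, we have $\cov(\Delta) = \|\be_{1} \wedge \cdots \wedge \be_{j}\|_S$ for an appropriate norm on the exterior power (a sup of the archimedean exterior norm and the $p$-adic exterior norm). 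The three axioms of a norm-like function that must be checked are (N1) monotonicity under primitive extensions: if $\Delta \subset \Delta'$ have the same rank and $\Delta$ is primitive, then $\nu(\Delta) \le \nu(\Delta')$; (N2) sub-multiplicativity on sums: $\nu(\Delta_1 + \Delta_2) \le C_\nu\, \nu(\Delta_1)\,\nu(\Delta_2)/\nu(\Delta_1 \cap \Delta_2)$ with an appropriate reading when $\Delta_1 \cap \Delta_2 = \{0\}$; and (N3) continuity/$\GL$-equivariance so that $x \mapsto \nu(h(x)\Delta)$ is continuous when $h$ is. The claim is that all three hold and in fact $C_\nu = 1$, i.e.\ axiom (N2) holds without any multiplicative constant.

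The key steps, in order, are as follows. \textbf{Step 1: Reduce to exterior algebra.} Identify each $\Delta \in \mathcal{P}(\Z_S, m)$ of rank $j$ with the decomposable $j$-vector obtained from a $\Z_S$-basis, which is well-defined up to a unit of $\Z_S = \Z[1/p]$, and note that $\cov(\Delta)$ only depends on $\Delta$, not on the choice of basis, because the norm on $\bigwedge^j \Q_S^m$ is the product over $v \in S$ of local norms and a change of basis multiplies the wedge by an element of $\Z_S^\times$, which has norm $1$ in the $S$-adic sense. \textbf{Step 2: Verify (N1).} If $\Delta \subsetneq \Delta'$ are both of rank $j$ with $\Delta$ primitive, pick a basis $\be_1, \ldots, \be_j$ of $\Delta$; then a basis of $\Delta'$ is of the form $\frac{1}{d}\be_1', \be_2, \ldots, \be_j$ (Smith normal form over $\Z_S$, which is a PID) with $\be_1' \in \Delta$ and $d \in \Z_S$ dividing the elementary divisor; primitivity of $\Delta$ forces $\|\be_1' \wedge \be_2 \wedge \cdots \|_S \ge \|\be_1 \wedge \cdots \|_S$ at each place, so $\cov(\Delta') = \|\frac{1}{d}\be_1' \wedge \cdots \|_S \ge \cov(\Delta)$. (Actually the cleanest route is: $\cov(\Delta') \cdot [\Delta' : \Delta] = \cov(\Delta)$ would be the equality if both lived in the same lattice, but since $\Delta$ primitive means $\Delta = \Q_S \Delta \cap \Z_S^m \supseteq$ no strictly larger rank-$j$ sublattice of a fixed ambient module — one argues $\cov$ can only increase.) \textbf{Step 3: Verify (N2) with $C_\nu = 1$.} This is the heart of the matter. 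One works place by place: over $\R$ (or $\Q_p$), for subspaces $V_1, V_2$ with covolumes of the corresponding lattices, one has the classical identity $\cov(\Delta_1 + \Delta_2)\cdot \cov(\Delta_1 \cap \Delta_2) \le \cov(\Delta_1)\cdot \cov(\Delta_2)$ coming from the orthogonal-projection / Hadamard-type inequality, and in the $p$-adic (non-archimedean) case this becomes an \emph{equality} $|{\be_{I} \wedge \be_{J}}|_p \cdot |\be_{I \cap J}|_p = |\be_I|_p \cdot |\be_J|_p$ (or at least $\le$) because ultrametric geometry behaves rigidly. Combining across $S = \{\infty, p\}$ and using that the $S$-norm is the product, one gets the submultiplicativity with constant $1$. \textbf{Step 4: Verify continuity.} Since $\cov(h(x)\Delta)$ is, up to the fixed choice of wedge representative, the $S$-norm of $\bigwedge^j h(x)$ applied to a fixed decomposable vector, and $x \mapsto h(x)$ is continuous into $\GL(m, \mathcal{R})$ with $\mathcal{R} = \Q_S$, the composition is continuous; this is where the "norm-like on $\mathcal{M}(\mathcal{R}, \mathcal{D}, m)$" framework of \cite{KT} is invoked verbatim.

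The main obstacle I expect is \textbf{Step 3}, establishing submultiplicativity of $\cov$ with the sharp constant $C_\nu = 1$ across both places simultaneously. At the archimedean place the inequality $\cov(\Delta_1 + \Delta_2)\cov(\Delta_1 \cap \Delta_2) \le \cov(\Delta_1)\cov(\Delta_2)$ is standard (it follows from the fact that the covolume of a lattice is at most the product of the norms of any generating set, applied after choosing adapted bases), but one must be careful that the inequality goes the right way and that the non-archimedean place does not spoil the constant — here the key point is the ultrametric inequality $\|x + y\| \le \max(\|x\|, \|y\|)$ and the fact that lengths of $\Q_p$-lattice wedges are governed by valuations of minors, for which the analogue of the Cauchy–Binet/Hadamard estimate holds with constant $1$. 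Since the relevant statement is literally Lemma 8.1 of \cite{KT} specialized to $S = \{\infty, p\}$, the honest proof is simply to cite that lemma; I would present the above as the verification that nothing in the specialization breaks, and note that the case $C_\nu = 1$ is exactly what was proved there for general finite $S$.

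\begin{proof}
This is Lemma $8.1$ of \cite{KT}, specialized to $S = \{\infty, p\}$; we briefly indicate the verification. For $\Delta \in \mathcal P(\Q_S, \Z_S, m)$ of rank $j$, choose a $\Z_S$-basis $\be_1, \dots, \be_j$ of $\Delta$ and set $w_\Delta = \be_1 \wedge \cdots \wedge \be_j \in \bigwedge^j \Q_S^m$; since $\Z_S^\times$ has $S$-norm $1$, the quantity $\cov(\Delta) = \|w_\Delta\|_S$ (the $S$-norm on the exterior power, namely $\max$ of the standard archimedean norm on $\bigwedge^j \R^m$ and the standard $p$-adic norm on $\bigwedge^j \Q_p^m$ evaluated on the respective components of $w_\Delta$) is well defined, proving first that $\cov$ is a genuine function on $\mathcal P(\Q_S, \Z_S, m)$. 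For $(N1)$, if $\Delta \subseteq \Delta'$ are both of rank $j$ and $\Delta$ is primitive, then by the theory of elementary divisors over the PID $\Z_S$ one may choose bases so that $w_{\Delta} = d\, w_{\Delta'}$ for some $d \in \Z_S$, and primitivity of $\Delta$ forces $|d|_v \ge 1$ for every $v \in S$, whence $\cov(\Delta) = \|d\, w_{\Delta'}\|_S \ge \|w_{\Delta'}\|_S = \cov(\Delta')$ at the archimedean place and $\cov(\Delta) = \cov(\Delta')$ at the $p$-adic place if $d$ is a $p$-adic unit; in all cases $\cov(\Delta') \le \cov(\Delta)$, which is $(N1)$. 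For $(N2)$, one argues separately at each place: at $\infty$ the Hadamard-type inequality gives $\cov_\infty(\Delta_1 + \Delta_2) \cov_\infty(\Delta_1 \cap \Delta_2) \le \cov_\infty(\Delta_1) \cov_\infty(\Delta_2)$, and at $p$ the ultrametric inequality together with the description of $p$-adic exterior norms via valuations of minors yields the same inequality (with constant $1$); multiplying the two and taking the $\max$-type $S$-norm yields $\cov(\Delta_1 + \Delta_2) \le \cov(\Delta_1)\cov(\Delta_2)/\cov(\Delta_1 \cap \Delta_2)$, i.e.\ $(N2)$ with $C_\nu = 1$ (and with the usual convention when $\Delta_1 \cap \Delta_2 = \{0\}$, where the right-hand side is $\cov(\Delta_1)\cov(\Delta_2)$). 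Finally, for a continuous $h : \tilde B \to \GL(m, \Q_S)$ the map $x \mapsto \cov(h(x)\Delta) = \|(\bigwedge^j h(x)) w_\Delta\|_S$ is continuous, since $\bigwedge^j h(x)$ depends continuously on $h(x)$ and the $S$-norm is continuous; this is the remaining requirement in the definition of a norm-like function. Hence $\nu = \cov$ is norm-like with $C_\nu = 1$.
\end{proof}
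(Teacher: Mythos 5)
Your bottom line, that the honest proof is to cite Lemma 8.1 of \cite{KT}, is exactly what the paper does: it states the lemma as a recollection from \cite{KT} (proved there for arbitrary finite $S$) and offers no further argument, so your approach coincides with the paper's.

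One caution about the verification you sketch, since it contains a genuine slip. For $S=\{\infty,p\}$ the quantity $\cov(\Delta)$ is the \emph{content} of a basis wedge, i.e.\ the product $\|w_\Delta\|_\infty\,\|w_\Delta\|_p$ of the two local norms (this is how it is used later in the paper, e.g.\ in the covolume computation in Section 6), not their maximum as you write. With the max, $\cov$ would not even be well defined: a change of $\Z_S$-basis multiplies $w_\Delta$ by a unit $\pm p^k$, whose local norms are $p^{k}$ and $p^{-k}$, so only the product is basis-independent --- your own ``units have $S$-norm $1$'' argument is precisely the product formula and fails for the max. The same slip infects your (N1) step: for $d\in\Z[1/p]\setminus\{0\}$ it is false that $|d|_v\ge 1$ at every place (take $d=p$), and primitivity of $\Delta$ together with $\Delta\subseteq\Delta'$ of equal rank in fact forces $\Delta=\Delta'$; the correct argument needs no primitivity and simply uses $|d|_\infty|d|_p\ge 1$, giving $\cov(\Delta)=|d|_\infty|d|_p\,\cov(\Delta')\ge\cov(\Delta')$. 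Finally, the (N2) required in \cite{KT} (and used in the nondivergence proof here) only concerns adjoining a single vector, $\nu(\Delta+\mathcal{D}\gamma)\le C_\nu\,\nu(\Delta)\,\nu(\mathcal{D}\gamma)$, which follows with $C_\nu=1$ directly from $\|w\wedge\gamma\|_v\le\|w\|_v\|\gamma\|_v$ at both places; the full submodular inequality with the intersection term that you invoke is stronger than needed and is only asserted, not proved, in your sketch. None of this affects the citation itself, which is the proof the paper intends.
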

	As a consequnece of Theorem \ref{QND} we get the following refined version of Theorem 8.3 of \cite{KT}
	\begin{theorem}\label{QND2}
		Let $X$ be a %proper
		be a Besicovitch metric space,
		$\mu$ a  uniformly Federer measure  on $X$, and let
		%$K$ and
		$S$
		%, $\Cal R$
		%and $\Cal D$
		%(of cardinality $s$)
		be as above.  For  $m\in \N$, let  a
		ball $B = B(x_0,r_0)\subset X$ and a continuous map $h:\tilde B
		\to \GL(m,\Q_S)$ be given, where $\tilde B$ stands for
		$B(x_0,3^mr_0)$.
		%Let $B$, $\tilde B$,
		%$h:\tilde B \to \GL(m,\bq_S)$ and $\frak P$ be defined as in the
		%formulation of Theorem 6.3.
		Now suppose that for some
		$C,\alpha > 0$ and $0 < \rho  < 1$  one has
		
		{\label{rm1}\rm(i)} for every $\,\Delta\in \mathcal P({\Z}_S,m)$, the function
		$\cov\big(h(\cdot)\Delta\big)$ is $(C,\alpha)$ good \ on $\tilde B$  with
		respect to $\mu$;
		
		{\label{rm2}\rm(ii)}  for every $\,\Delta\in \mathcal P({\Z}_S,m)$,
		$\sup_{x\in B\cap \supp\mu}\cov\big(h(x)\Delta\big) \ge \rho^{rk(\Delta)}$.
		
		Then %, with $b$ defined as in {\rm (2.7)},
		for any  positive $ \varepsilon\le
		%\left(\frac{\rho}{\sqrt{D_K}}\right)^{1/s}
		\rho$ one has
		$$
		\mu\left(\big\{x\in B\bigm| \delta \big(h(x){\Z}_S^m\big) < \varepsilon
		\big\}\right)\le mC \big(N_{X}D_{\mu}^2\big)^m
		\left(\frac\varepsilon \rho \right)^\alpha \mu(B)\
		$$
	\end{theorem}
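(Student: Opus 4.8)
The plan is to obtain Theorem \ref{QND2} as a direct specialization of the general quantitative nondivergence estimate Theorem \ref{QND}, in exactly the way that Theorem 8.3 of \cite{KT} is deduced from their general estimate, the only difference being that the sharper Theorem \ref{QND} is used in place of the Kleinbock--Tomanov version. I would apply Theorem \ref{QND} with the integral domain $\mathcal{D} = \Z_S$, its quotient field $K = \Q$, the topological ring $\mathcal{R} = \Q_S = \R \times \Q_p$ (which contains $\Q$ diagonally as a subring), and the norm-like function $\nu = \cov$ on $\mathcal{M}(\Q_S, \Z_S, m)$ --- which by Lemma 8.1 of \cite{KT} recalled above is norm-like with $C_\nu = 1$. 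For each $\Delta \in \mathcal{P}(\Z_S, m)$ set $\psi_\Delta(x) := \cov(h(x)\Delta)$ on $\tilde B$, which is continuous since $h$ is continuous and $\cov$ is; take the weights $\eta(\Delta) := \rho^{\rk\Delta}$. It then remains to verify hypotheses (i)--(iii) of Theorem \ref{QND}.

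Hypothesis (i) is precisely hypothesis (i) of Theorem \ref{QND2}. For hypothesis (ii), I would observe that for a \emph{continuous} function one has $\|\psi_\Delta\|_{\mu,B} = \sup_{x \in B \cap \supp\mu}\psi_\Delta(x)$: given $c$ below this supremum, pick $x_0 \in B \cap \supp\mu$ with $\psi_\Delta(x_0) > c$; by continuity $\{x : \psi_\Delta(x) > c\}$ is open and contains $x_0$, so its intersection with $B$ is an open neighborhood of $x_0$, which has positive $\mu$-measure because $x_0 \in \supp\mu$. Hence $\|\psi_\Delta\|_{\mu,B} \ge \sup_{x \in B \cap \supp\mu}\cov(h(x)\Delta) \ge \rho^{\rk\Delta} = \eta(\Delta)$ by hypothesis (ii) of Theorem \ref{QND2}.

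The one genuinely non-formal point --- and the step I expect to be the main obstacle --- is hypothesis (iii): for each fixed $x \in \tilde B \cap \supp\mu$, only finitely many $\Delta \in \mathcal{P}(\Z_S, m)$ satisfy $\cov(h(x)\Delta) < \rho^{\rk\Delta}$. Here I would use that $\Z_S^m$, embedded diagonally in $\Q_S^m$, is a lattice (a discrete, cocompact $\Z_S$-submodule), so $h(x)\Z_S^m$ is one as well since $h(x) \in \GL(m,\Q_S)$; and that for any lattice in $\Q_S^m$, any bound $T$, and any rank $j$, there are only finitely many primitive rank-$j$ submodules of covolume at most $T$ (by $S$-arithmetic reduction theory such a submodule is spanned by lattice points of norm bounded in terms of $T$, of which there are finitely many). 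Summing over the finitely many ranks $j = 1, \dots, m$, and using $\rho < 1$ so that $\rho^j$ only decreases, yields (iii); this is the finiteness property underlying \cite{KT}, which I would invoke from there.

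With (i)--(iii) verified, Theorem \ref{QND} applied with $0 < \varepsilon \le \rho$ gives
$$
\mu\Big(\big\{x \in B : \cov(h(x)\gamma) < \varepsilon/C_\nu \text{ for some } \gamma \in \Z_S^m \setminus \{0\}\big\}\Big) \le mC\big(N_X D_\mu^2\big)^m (\varepsilon/\rho)^\alpha \mu(B).
$$
Since $C_\nu = 1$ and $\delta(h(x)\Z_S^m)$ is, by definition, the minimum of $\cov(h(x)\gamma)$ over $\gamma \in \Z_S^m \setminus \{0\}$, the set on the left-hand side coincides with $\{x \in B : \delta(h(x)\Z_S^m) < \varepsilon\}$, which is the desired conclusion. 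Everything apart from hypothesis (iii) is routine bookkeeping translating Theorem \ref{QND} into the language of $\delta$.
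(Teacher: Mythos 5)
Your proposal is correct and is essentially the paper's own argument: the paper proves Theorem \ref{QND2} by stating that the deduction goes line by line as in Theorem 8.3 of \cite{KT}, with Theorem \ref{QND} substituted for Theorem 6.3 of \cite{KT}, which is exactly the specialization ($\mathcal{D}=\Z_S$, $K=\Q$, $\mathcal{R}=\Q_S$, $\nu=\cov$ with $C_\nu=1$, $\eta(\Delta)=\rho^{\rk\Delta}$) you carry out, including the discreteness/finiteness verification for hypothesis (iii). Your argument for (iii) in fact gives finiteness of $\{\Delta : \cov(h(x)\Delta)<\rho\}$ as literally required by Theorem \ref{QND}, so there is no gap.
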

	\begin{proof}
		The proof goes line by line as Theorem (8.3)\cite{KT} using Theorem \ref{QND} in place of theorem (6.3) of \cite{KT}.
	\end{proof}
	
\section{Applying nondivergence estimates}\label{kleinbock}
For any $\by\in\Q_p^{n+1}$ we associate a lattice $u_{\by}\mathcal{D}^{n+1}$ in $(\Q_p\times \R)^{n+1}$, where $u_{\by}$ is defined as
$$u_y^p=
\begin{bmatrix}
1 & y_1 &\cdots &y_n\\
\ &  &I_n
\end{bmatrix}$$ and $u_{y}^\infty=I_{n+1}$.
For $t\in\N$ define
$$g_t^p=\begin{bmatrix}
p^{-t} &0\\
0 &I_n
\end{bmatrix}
\text{ and }
   g_t^\infty = \diag(p^{-\frac{t}{n+1}},\cdots, p^{-\frac{t}{n+1}}).$$
So $g_tu_y\in \GL_{n+1}(\Q_p\times \R)$.\\

\noindent
We now proceed to connect dynamics with Diophantine properties. 
For any $v>n$ and  $\frac{1}{n+1}>d>c=\frac{v-n}{(n+1)(v+1)}$, define $v_d=\frac{n(1+d)+d}{1-(n+1)d}$. Hence $\frac{n(1+c)+c}{1-(n+1)c}=v<v_d$.
By Proposition \ref{connection1},
$$
\mathcal W^p_{v_d}=\left\{\by\in \Q_p^n \bigg| 
\begin{aligned}
\exists \text{ arbitrarily large }  t>0 \text{ such that }\\
\max \{p^{\frac{nt}{n+1}}|q_0+\bq.\by|_p\Vert\tilde{\bq}\Vert_\infty, p^{-\frac{t}{n+1}}\Vert\bq\Vert_p\Vert\tilde{\bq}\Vert_\infty \}\leq p^{-dt} \\
\text{ for some }
\tilde{\bq}\in\mathcal{D}^{n+1}
\end{aligned}
\right\}.
$$
Now define the set 
$$
\widetilde{\mathcal W_{v_d}}:=\Bigg\{\by\in \Q_p^n \bigg| 
\begin{aligned}
~\exists \text{ arbitrarily large }  t\in \N \\\text{ such that }
\delta(g_tu_{\by}\mathcal{D}^{n+1})\leq p^{-dt}
\end{aligned}
\Bigg\}.
$$
Then we have 
\begin{lemma}\label{d_tilde_d}
	Let $\nu$ be a measure on $\Q_p^n$. Then the following statements are equivalent: 
	
	\begin{enumerate}
		\item $\nu(\mathcal W^p_{v_d})=0 ~~ \forall~d > c.$\\
		\item $\nu (\widetilde{\mathcal W_{v_d}})=0 ~~\forall~d > c.$
	\end{enumerate}
\end{lemma}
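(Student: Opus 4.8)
The plan is to show that the two sets $\mathcal{W}^p_{v_d}$ and $\widetilde{\mathcal{W}_{v_d}}$ differ only by a set contained in a countable union of proper affine subspaces of $\Q_p^n$ (in fact, by the very definition of $\delta$ via covolumes, they should essentially coincide up to uniformly bounded constants), so that any reasonable measure assigns them the same null/non-null status. More precisely, the key observation is that $\delta(g_t u_{\by}\mathcal{D}^{n+1})$ is, by definition of the covolume function, the infimum over all primitive rank-one submodules $\Delta = \mathcal{D}\tilde{\bq} \subset \mathcal{D}^{n+1}$ of $\cov(g_t u_{\by}\Delta)$, and each such covolume is comparable (up to constants depending only on $n$) to the quantity $\max\{p^{\frac{nt}{n+1}}|q_0 + \bq\cdot\by|_p \|\tilde{\bq}\|_\infty,\ p^{-\frac{t}{n+1}}\|\bq\|_p\|\tilde{\bq}\|_\infty\}$ appearing in Proposition \ref{connection1}. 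So the main work is to make this comparison explicit.

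The steps I would carry out are as follows. First, unwind the definition: $\delta(g_t u_{\by}\mathcal{D}^{n+1}) = \min_{\Delta} \cov(g_t u_{\by}\Delta)$ where $\Delta$ ranges over rank-one primitive submodules (the shortest vector controls the covolume of the lattice, and for the $\delta$ function as defined in \cite{KT} via discrete submodules this is precisely the relevant quantity). Second, fix a primitive rank-one submodule spanned by $\tilde{\bq} = (q_0,\bq) \in \mathcal{D}^{n+1}$ and compute $g_t u_{\by}\tilde{\bq}$ coordinate-wise in $(\Q_p\times\R)^{n+1}$: the $p$-adic component of the first coordinate scales $q_0 + \bq\cdot\by$ by $p^{-t}$ while the archimedean component contributes the sup-norm factor, and the remaining coordinates contribute $\|\bq\|_p \|\tilde{\bq}\|_\infty$ after applying $g_t^\infty = \diag(p^{-t/(n+1)},\dots)$; collecting the normalizations one sees that $\cov(g_t u_{\by}(\mathcal{D}\tilde{\bq}))$ is comparable to $\max\{p^{\frac{nt}{n+1}}|q_0+\bq\cdot\by|_p\|\tilde{\bq}\|_\infty,\ p^{-\frac{t}{n+1}}\|\bq\|_p\|\tilde{\bq}\|_\infty\}$ up to a constant $c_n$ depending only on $n$. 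Third, conclude: if $\delta(g_t u_{\by}\mathcal{D}^{n+1}) \le p^{-dt}$ for arbitrarily large $t \in \N$, then there is some $\tilde{\bq}$ realizing (up to $c_n$) this bound, hence $\by \in \mathcal{W}^p_{v_{d'}}$ for $d'$ slightly smaller than $d$ (absorbing $c_n$ into the exponent); conversely the membership in $\mathcal{W}^p_{v_d}$ immediately produces a $\tilde{\bq}$ with $\cov(g_t u_{\by}(\mathcal{D}\tilde{\bq})) \le c_n p^{-dt}$, hence $\delta(g_t u_{\by}\mathcal{D}^{n+1}) \le c_n p^{-dt} \le p^{-d'' t}$ for large $t$ and slightly smaller $d''$. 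Fourth, observe that the families $\{\mathcal{W}^p_{v_d}\}_{d>c}$ and $\{\widetilde{\mathcal{W}_{v_d}}\}_{d>c}$ are each nested (monotone in $d$), so passing to a sequence $d_j \downarrow c$ and using that "$\nu(\mathcal{W}^p_{v_d}) = 0$ for all $d > c$" is equivalent to "$\nu(\mathcal{W}^p_{v_{d_j}}) = 0$ for all $j$", the $\varepsilon$-loss in the exponent in the comparison above is harmless: a $d$-statement for one family is squeezed between a $d'$- and a $d''$-statement for the other with $d', d''$ still $> c$. This gives the equivalence of (1) and (2).

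The main obstacle I anticipate is the bookkeeping in Step 2, namely getting the normalization of the covolume function $\cov$ exactly right across the two places $\{\infty, p\}$ so that the comparison with the quantity in Proposition \ref{connection1} is clean and the constant $c_n$ is genuinely independent of $\by$ and $t$; one has to be careful that the archimedean part of $g_t$ contributes the factor $p^{-t/(n+1)}$ to each of the $n+1$ coordinates and reconcile this with the $p^{-t}$ in the single $p$-adic coordinate, checking that the determinant of $g_t$ is $1$ across the adelic product so that covolumes transform as expected. A secondary, more minor point is verifying that restricting to rank-one submodules $\Delta$ in the definition of $\delta$ suffices — this is standard (the successive-minima / Minkowski-type reduction used in \cite{KT}, \cite{Kleinbock-exponent} reduces the covolume lower bound to the shortest nonzero vector), but it should be cited rather than reproved. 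Once these normalizations are pinned down, the rest is the routine $\varepsilon$-in-the-exponent argument of Step 4.
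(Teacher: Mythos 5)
Your proposal is correct and is essentially the paper's own argument: the paper likewise unwinds $\delta(g_tu_{\by}\mathcal{D}^{n+1})$ as the minimum over nonzero $\tilde\bq\in\mathcal{D}^{n+1}$ of the content of $g_tu_{\by}\tilde\bq$, identifies that content (in fact exactly, with no constant $c_n$) with the quantity $\max\{p^{\frac{nt}{n+1}}|q_0+\bq\cdot\by|_p\Vert\tilde\bq\Vert_\infty,\ p^{-\frac{t}{n+1}}\Vert\bq\Vert_p\Vert\tilde\bq\Vert_\infty\}$ of Proposition \ref{connection1}, and then absorbs losses by shrinking $d$ slightly, using the quantifier ``for all $d>c$''. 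The one point you gloss over is that $\widetilde{\mathcal W_{v_d}}$ demands integer $t$ while membership in $\mathcal W^p_{v_d}$ only furnishes arbitrarily large real $t$, so in that direction one must replace $t$ by $[t]$ -- this rounding (not a covolume constant) is the actual source of the paper's passage from $d$ to $d'$, and your mechanism of trading a bounded factor for a slightly smaller exponent at large $t$ handles it verbatim.
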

\begin{proof}
	We first prove that $(1)$ implies $(2)$. Let $\by\in\widetilde{\mathcal W_{v_d}},$ then there exists arbitrarily large $t\in\N$ such 
	that \begin{equation}\label{1}
	\delta(g_tu_{\by}\mathcal{D}^{n+1})\leq p^{-dt}.
	\end{equation}
	Observe that $$(g_t u_{\by}\mathcal{D}^{n+1})_\infty=\{(p^{-\frac{t}{n+1}}q_0,\cdots,p^{-\frac{t}{n+1}}q_n) \left| q_i\in\mathcal D \right.\},$$
	$$(g_t u_{\by}\mathcal{D}^{n+1})_p=\{(p^{-t}(q_0+\bq.\by),q_1,\cdots,q_n)\left| q_i\in\mathcal{D}\right.\}$$
	and
$$	\delta(g_t u_{\f(x)}\mathcal{D}^{n+1})=\min_{\tilde\bq\in\mathcal{D}^{n+1}\setminus\{0\}}c(g_tu_{\f(x)}\tilde{\bq}).$$
	So from (\ref{1}) there exist infinitely many $t\in\N$ such that$$
	\max\{p^t\vert \bq.\by+q_0\vert_p,\Vert\bq\Vert_p\}.p^{-\frac{t}{n+1}}\Vert\tilde{\bq}\Vert_\infty\leq p^{-dt}$$
	which implies that
	$$\max\{p^{\frac{nt}{n+1}}\vert \bq.\by+q_0\vert_p \Vert\tilde{\bq}\Vert_\infty,p^{-\frac{t}{n+1}}\Vert\bq\Vert_p.\Vert\tilde{\bq}\Vert_\infty\}\leq p^{-dt}  
	$$
	for some $\tilde{\bq}\in\mathcal{D}^{n+1}$. 
	 Hence $\widetilde{\mathcal W_{v_d}}\subset \mathcal W^p_{v_d} ~~\forall ~d>c.$\\
	We now prove $(2)$ implies ($1$). We claim that $\mathcal W^p_{v_d}\subset \widetilde{\mathcal W_{v_ d^\prime}} $ for some $d> d^\prime>c$. Note that
	$$\begin{aligned}\Vert\bq\Vert_p\Vert\tilde{\bq}\Vert_\infty&\leq p^{-(d-\frac{1}{n+1})t}\\
	&\leq p^{-(d-\frac{1}{n+1})}p^{-(d-\frac{1}{n+1})[t]}\\
	&\leq \frac{p^{-(d-\frac{1}{n+1})}}{p^{(d-d^\prime)[t]}}p^{-(d^\prime-\frac{1}{n+1})[t]} \text{ for some  }  d> d^\prime>c\\
	& \leq p^{-(d^\prime-\frac{1}{n+1})[t]} \text{ for large enough } t>0.
	\end{aligned}
	$$
	Finally, 
	$$p^{[t]}|\bq.\by+q_0|_p\Vert\tilde{\bq}\Vert_\infty\leq p^t|\bq.\by+q_0|_p\Vert\tilde{\bq}\Vert_\infty\leq p^{-dt}\leq p^{-d[t]}\leq p^{-d^\prime[t]}$$ 
	for some $\tilde{\bq}\in\mathcal{D}^{n+1}$. 
	Therefore we have that $\mathcal W^p_{v_d}\subset \widetilde{\mathcal W_{v_ d^\prime}} $ and the conclusion follows.
\end{proof}

\begin{proposition}\label{prop1}
Take $\mathcal{R}=\Q_p\times \R$ and $\mathcal{D}=\Z[1/p]$and $v>n, c:=\frac{v-n}{(n+1)(v+1)} $ as defined in Proposition \ref{connection1}. Let $X$ be a Besicovitch metric space and $\mu$ be a uniformly Federer measure on $X$. Denote $\tilde B:=B(x,3^{n+1}r)$. Suppose we are given a continuous function $\f : X\mapsto \Q_{p}^{n}$ and $C,\alpha >0$ with the following properties\\
		{\label{b1}\rm{(i)}}~$x\mapsto \cov(g_t u_{\f(x)}\Delta) $is $(C,\alpha)$ good  with respect to  $\mu\text{ in }\tilde B  ~\forall ~\Delta \in \mathcal P(\mathcal D,n+1),$\\
		{\label{b3}\rm{(ii)}}  for any $d>c $ there exists $T=T(d) >0$ such that for any integer  $t\geq T$ and any $\Delta \in \mathcal P(\mathcal D,n+1)$ one has 
		\begin{equation}\label{A2}
		\sup_{x\in B\cap \supp\mu}\cov (g_t u_{\f(x)}\Delta)\geq p^{-(\rk\Delta)dt}.
		\end{equation}
		Then $w_p(f_*\mu|_B)\leq v$.

\end{proposition}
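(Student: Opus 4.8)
The plan is to combine the quantitative nondivergence estimate Theorem~\ref{QND2} with the dictionary between $\Z[1/p]$-exponents and the flow $g_t u_{\f(x)}$ established in Lemma~\ref{d_tilde_d}, reducing the measure bound $w_p(\f_*\mu|_B)\le v$ to showing that the set $\widetilde{\mathcal W_{v_d}}$ has $\mu$-measure zero for every $d>c$. First I would fix $d>c$ and observe that, by definition, $\by\in\widetilde{\mathcal W_{v_d}}$ precisely when $\delta(g_t u_{\by}\mathcal{D}^{n+1})\le p^{-dt}$ for arbitrarily large integers $t$; hence
$$
\widetilde{\mathcal W_{v_d}}\cap\supp\mu\ \subset\ \bigcap_{T\in\N}\ \bigcup_{t\ge T}\ \{x\in X\mid \delta(g_t u_{\f(x)}\mathcal{D}^{n+1})\le p^{-dt}\}.
$$
The goal is a Borel--Cantelli style argument: I would cover $B$ by countably many balls on which hypotheses (i) and (ii) of Theorem~\ref{QND2} apply with $h(\cdot)=g_t u_{\f(\cdot)}$, and then sum the resulting measure estimates over $t$.

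The key step is the application of Theorem~\ref{QND2} for each fixed $t\ge T(d)$. Hypothesis~(i) of the present proposition gives exactly the $(C,\alpha)$-good condition needed for every $\Delta\in\mathcal P(\mathcal D,n+1)$. Hypothesis~(ii), namely $\sup_{x\in B\cap\supp\mu}\cov(g_t u_{\f(x)}\Delta)\ge p^{-(\rk\Delta)dt}$, is precisely condition~(ii) of Theorem~\ref{QND2} with $\rho=\rho_t:=p^{-dt}<1$. Applying the theorem with $\varepsilon=\varepsilon_t:=p^{-d't}$ for a fixed $d'$ with $d>d'>c$ (legitimate since $\varepsilon_t\le\rho_t$ for large $t$) yields
$$
\mu\big(\{x\in B\mid \delta(g_t u_{\f(x)}\mathcal{D}^{n+1})<p^{-d't}\}\big)\ \le\ (n+1)C\big(N_X D_\mu^2\big)^{n+1}\,p^{-(d-d')\alpha t}\,\mu(B).
$$
Since $p^{-(d-d')\alpha t}$ is summable in $t\in\N$, the Borel--Cantelli lemma gives $\mu\big(\limsup_{t}\{x\in B\mid \delta(g_t u_{\f(x)}\mathcal{D}^{n+1})<p^{-d't}\}\big)=0$. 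As $\{x\mid\delta(g_t u_{\f(x)}\mathcal{D}^{n+1})\le p^{-dt}\}\subset\{x\mid\delta(g_t u_{\f(x)}\mathcal{D}^{n+1})<p^{-d't}\}$ for large $t$, we conclude $\mu\big(\widetilde{\mathcal W_{v_d}}\cap B\big)=0$, hence $\mu|_B(\widetilde{\mathcal W_{v_d}})=0$, for every $d>c$.

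Finally I would translate this back to the exponent. By Lemma~\ref{d_tilde_d} applied to the measure $\nu=\mu|_B$, the vanishing $\mu|_B(\widetilde{\mathcal W_{v_d}})=0$ for all $d>c$ is equivalent to $\mu|_B(\mathcal W^p_{v_d})=0$ for all $d>c$. Since $v_d=\frac{n(1+d)+d}{1-(n+1)d}$ decreases to $v$ as $d\downarrow c$, and the sets $\mathcal W^p_{v_d}$ increase as $d\downarrow c$ (larger exponent is a stronger condition), for any $v'>v$ we may pick $d>c$ with $v_d<v'$, so that $\{y\mid w_p(y)>v'\}\subset\mathcal W^p_{v'}\subset\mathcal W^p_{v_d}$ has $\mu|_B$-measure zero; taking $v'\downarrow v$ gives $w_p(\f_*\mu|_B)=\sup\{v'\mid \mu|_B(\{y\mid w_p(y)>v'\})>0\}\le v$, as claimed. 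The main obstacle I anticipate is the bookkeeping in the covering/localization step: Theorem~\ref{QND2} is stated for a single ball $B$ with a uniformly Federer measure, so one must either invoke uniform Federer-ness directly (as assumed in the hypotheses here) or pass to a countable cover of $\supp\mu$ by such balls and check that the null sets assemble correctly; the good and nonplanar hypotheses are only used through~(i) and~(ii), which are taken as given, so the analytic heart is entirely contained in the summable estimate above.
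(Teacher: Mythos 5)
Your overall strategy is exactly the paper's: apply the quantitative nondivergence Theorem \ref{QND2} to $h=g_tu_{\f}$ on the single ball $B$, sum the resulting estimates over integer $t$ via Borel--Cantelli to get $(\f_*\mu|_B)(\widetilde{\mathcal W_{v_d}})=0$ for all $d>c$, transfer this to $\mathcal W^p_{v_d}$ by Lemma \ref{d_tilde_d}, and let $d\downarrow c$ (so $v_d\downarrow v$) to conclude $w_p(\f_*\mu|_B)\le v$. That part, including the final monotonicity argument, is fine.

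However, the central application of Theorem \ref{QND2} is mis-stated: you set $\rho_t=p^{-dt}$ (from hypothesis (ii) at the parameter $d$) and $\varepsilon_t=p^{-d't}$ with $c<d'<d$, and claim $\varepsilon_t\le\rho_t$. Since $d'<d$ one has $p^{-d't}>p^{-dt}$, so in fact $\varepsilon_t>\rho_t$, violating the hypothesis $\varepsilon\le\rho$ of the theorem; moreover with your choices $(\varepsilon_t/\rho_t)^\alpha=p^{+(d-d')\alpha t}$, so the summable bound $p^{-(d-d')\alpha t}$ you display does not follow from the theorem as invoked. The repair is to interchange the roles: use hypothesis (ii) at the \emph{smaller} exponent $d'\in(c,d)$ to supply $\rho=p^{-d't}$ (valid for $t\ge T(d')$, since $\sup_{x\in B\cap\supp\mu}\cov(g_tu_{\f(x)}\Delta)\ge p^{-(\rk\Delta)d't}=\rho^{\rk\Delta}$), and take $\varepsilon=p^{-dt}\le\rho$, which yields
\begin{equation*}
\mu\big(\{x\in B \mid \delta(g_tu_{\f(x)}\mathcal D^{n+1})<p^{-dt}\}\big)\ \le\ (n+1)C\big(N_XD_\mu^2\big)^{n+1}\,p^{-(d-d')\alpha t}\,\mu(B),
\end{equation*}
a genuinely summable bound; this is precisely what the paper does with $d'=\tfrac{c+d}{2}$. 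With this swap (and noting $\{\delta\le p^{-dt}\}\subset\{\delta<p^{-d''t}\}$ for $c<d''<d$, or simply running the argument for every $d>c$, to handle the non-strict inequality in $\widetilde{\mathcal W_{v_d}}$), the rest of your argument goes through and coincides with the paper's proof. One small notational point: the measure to which Lemma \ref{d_tilde_d} is applied is the pushforward $\nu=\f_*(\mu|_B)$ on $\Q_p^n$, not $\mu|_B$ itself, since the sets $\widetilde{\mathcal W_{v_d}}$ and $\mathcal W^p_{v_d}$ live in $\Q_p^n$.
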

\begin{proof}
		We will check that the map $h=g_tu_{\f}$ satisfies the assumptions of Theorem \ref{QND2} with respect to the measure $\nu=\f_*\mu|_B$ where 
	(\ref{b1})  is the same as (\rm{i}) of Theorem \ref{QND2} with $m=n+1$. To check the second condition 
	take $\rho =p^{-\frac{c+d}{2}t} \text{ for } d>c$. Then (\ref{b3}) gives (\rm{ii}) of Theorem \ref{QND2} for all integer $t>T(\frac{c+d}{2}).$ Therefore by Theorem \ref{QND2} 
	\begin{align}\label{borel_cantelli}
	&\mu\left(\bigg\{x\in B\left | \delta (g_t u_{\f(x)}\mathcal D^{n+1})<p^{-dt}\right.\bigg\}\right)\\
	&\leq (n+1)C(N_X D_{\mu}^2)^{n+1}(p^{-dt}p^{\frac{c+d}{2}t})^\alpha \mu(B)\\
	&= const.p^{-\alpha\frac{d-c}{2}t}
	\end{align} for all but finitely many $t\in \N$.\\
	Using the Borel-Cantelli lemma and (\ref{borel_cantelli}) we have $\nu(\widetilde{\mathcal W_{v_d}})=0 ~\forall~d>c$, which  again gives $\nu({\mathcal W^p_{v_d}})=0 ~ \forall~d>c$ by Lemma \ref{d_tilde_d}. Since $\lim_{k \to \infty}d_k = c$, we have that  $\lim_{k \to \infty}v_{d_k} = v$ and now using the formula for $v_d$, we have that  for any $u>v$ there exists $\frac{1}{n+1}>d>c$ such that $u>v_{d_k}$. So $\nu(\mathcal W^p_u)=0$ because $\mathcal{W}^p_u\subset \mathcal{W}^p_{v_{d_{k}}}$. Hence $w_p(\nu)=w_p(f_*\mu|_B)\leq v$.
\end{proof}
Let us now show that condition $(2)$ in Proposition \ref{prop1} is in fact necessary to conclude Proposition \ref{prop1}. This a $p$-adic version of Lemma (4.1) of \cite{Kleinbock-exponent}.
\begin{lemma}\label{necessary}
	Let $\mu$ be a measure on $B$ and take $c,v>0$ as before. Let $\f: B\mapsto \Q_p^n$ be such that condition (\rmfamily{ii}) in Proposition \ref{prop1} does not hold. Then 
	\begin{equation}
	\f(B\cap\supp\mu)\subset \mathcal{W}_u^{p} \text{ for some } u>v.
	\end{equation}
\end{lemma}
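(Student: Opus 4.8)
The plan is to negate condition (ii) quantitatively and then produce, for a single well-chosen $x$ in $B\cap\supp\mu$, a sequence of $\Z[1/p]$-approximations witnessing a $\Z[1/p]$-exponent strictly exceeding $v$. First I would unpack what it means for condition (ii) to fail: there exists $d>c$ such that for every $T>0$ there is an integer $t=t(T)\geq T$ and a primitive submodule $\Delta=\Delta_t\in\mathcal P(\mathcal D,n+1)$ with
$$
\sup_{x\in B\cap\supp\mu}\cov\big(g_t u_{\f(x)}\Delta_t\big) < p^{-(\rk\Delta_t)\,dt}.
$$
So along an unbounded set of $t\in\N$ we have submodules $\Delta_t$ that are "small" uniformly over $B\cap\supp\mu$. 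As in Kleinbock's argument (Lemma 4.1 of \cite{Kleinbock-exponent}), the key point is that $\cov$ of a submodule is the product of $\cov$ of its rank-one constituents along a basis, so smallness of $\cov(g_tu_{\f(x)}\Delta_t)$ forces the existence of at least one primitive vector $\tilde\bq=\tilde\bq_t\in\Delta_t\cap\mathcal D^{n+1}\setminus\{0\}$ (namely the first vector in a suitably chosen basis, or an appropriate rank-one submodule inside $\Delta_t$) with
$$
\cov\big(g_tu_{\f(x)}\tilde\bq_t\big)=c\big(g_tu_{\f(x)}\tilde\bq_t\big) < p^{-dt}
$$
for all $x\in B\cap\supp\mu$ simultaneously. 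Here I would use that $\cov$ restricted to rank-one lattices is the covolume $c(\cdot)$, and that $c(g_tu_{\f(x)}\tilde\bq_t)$ is exactly $\max\{p^t|q_0+\bq\cdot\f(x)|_p,\Vert\bq\Vert_p\}\cdot p^{-\frac{t}{n+1}}\Vert\tilde\bq\Vert_\infty$, as computed in the proof of Lemma \ref{d_tilde_d}.

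Next I would translate this back to the Diophantine side exactly as in Lemma \ref{d_tilde_d}: the bound $c(g_tu_{\f(x)}\tilde\bq_t)<p^{-dt}$ gives
$$
\max\Big\{p^{\frac{nt}{n+1}}|q_0+\bq\cdot\f(x)|_p\Vert\tilde\bq_t\Vert_\infty,\;p^{-\frac{t}{n+1}}\Vert\bq_t\Vert_p\Vert\tilde\bq_t\Vert_\infty\Big\}\leq p^{-dt}
$$
for every $x\in B\cap\supp\mu$ and for arbitrarily large integers $t$. By Proposition \ref{connection1} (applied with this $d$ in place of $c$, noting $d>c$ corresponds to the exponent $v_d>v$), this says precisely that each such $\f(x)$ lies in $\mathcal W^p_{v_d}$ with the fixed value $v_d=\frac{n(1+d)+d}{1-(n+1)d}>v$. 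Thus $\f(B\cap\supp\mu)\subset\mathcal W^p_{v_d}$, and taking $u=v_d$ finishes the proof.

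The main obstacle I anticipate is the passage from "$\cov(g_tu_{\f(x)}\Delta_t)$ is small" to "there is a single primitive rank-one vector $\tilde\bq_t$ making $c(g_tu_{\f(x)}\tilde\bq_t)$ small uniformly in $x$." One has to be careful that the vector can be chosen independently of $x$ — which follows because $\Delta_t$ itself is chosen independently of $x$ (the failure of (ii) is a statement about the supremum over $x$), and within the fixed module $\Delta_t$ one can pick, e.g., a primitive vector of minimal norm, or invoke the norm-like property of $\cov$ (submultiplicativity along flags, property (N2)) to extract a rank-one factor whose $\cov$ is at most a constant times $\cov(g_tu_{\f(x)}\Delta_t)^{1/\rk\Delta_t}\leq p^{-dt}$, which is still $\leq p^{-d't}$ for any $c<d'<d$ and $t$ large. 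Replacing $d$ by such a $d'$ throughout (and $v_d$ by $v_{d'}>v$) absorbs the constant, exactly the same device used in Lemma \ref{d_tilde_d}. The rest is the bookkeeping already carried out there and in Proposition \ref{connection1}.
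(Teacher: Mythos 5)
Your overall skeleton agrees with the paper's: negate condition (ii) to obtain $d>c$, a rank $j$, times $t_i\to\infty$ and primitive modules $\Delta_i$ with $\sup_{x\in B\cap\supp\mu}\cov(g_{t_i}u_{\f(x)}\Delta_i)\le p^{-jdt_i}$, then extract a nonzero $\tilde\bq\in\Z[1/p]^{n+1}$ satisfying the two inequalities of Proposition \ref{connection1} with $d$ in place of $c$, and conclude $\f(x)\in\mathcal{W}^{p}_{v_d}$ with $v_d>v$. The genuine gap is exactly at the step you yourself flag as the ``main obstacle'', and none of the mechanisms you offer closes it. The covolume of a rank-$j$ module is \emph{not} the product of the contents of the vectors of a basis; for an arbitrary basis one only has a Hadamard-type inequality $\cov(\Delta)\le\prod_i c(v_i)$, which points the wrong way, so smallness of $\cov(g_tu_{\f(x)}\Delta_t)$ does not by itself force any vector of $\Delta_t$ to be short. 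The existence of a short vector (equivalently, of a basis whose product of contents is comparable to the covolume) is precisely Minkowski's first-minimum bound, which in this setting is a theorem to be invoked, not a formal consequence of the definitions. Likewise, property (N2) of norm-like functions is submultiplicativity for the module generated by $\Delta$ and $\gamma$; it produces upper bounds for \emph{larger} modules and cannot be used to ``extract a rank-one factor'' with covolume $\ll\cov(\Delta)^{1/j}$. Finally, the relevant quantity $c(v)=\Vert v\Vert_p\Vert v\Vert_\infty$ is a content whose sublevel sets are unbounded and non-convex, so ``a primitive vector of minimal norm'' is not even the right object; one must apply a Minkowski-type theorem to an honest box.

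This is what the paper does: inside the span $\Q_S\Delta$ of $g_{t_i}u_{\f(x)}\Delta_i$ it takes the asymmetric region $D$ whose archimedean part is the sup-norm ball of radius $p^{-dt_i}$ and whose $p$-adic part is the unit polydisc, shows via an adapted basis that the normalized Haar measure of $D\cap\Q_S\Delta$ is at least $2^jp^{-jdt_i}\ge 2^j\cov(g_{t_i}u_{\f(x)}\Delta_i)$, and then applies the $S$-arithmetic Minkowski theorem (Lemma 7.7 of \cite{KT}) to produce a nonzero $\tilde\bq$ with $p^{-t_i/(n+1)}\Vert\tilde\bq\Vert_\infty\le p^{-dt_i}$, $p^{t_i}|q_0+\bq\cdot\f(x)|_p\le 1$ and $\Vert\bq\Vert_p\le 1$; multiplying these gives exactly the inequalities required in Proposition \ref{connection1}. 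Note also that this vector depends on $x$ (the lattice $g_{t_i}u_{\f(x)}\Delta_i$ does), so your claim that it can be chosen ``for all $x$ simultaneously'' is unjustified --- but it is also unnecessary, since membership of $\f(x)$ in $\mathcal{W}^{p}_{v_d}$ is a pointwise statement and the uniformity of the hypothesis enters only through the supremum bound on the covolume. To repair the write-up, replace your extraction step by this volume computation plus the $S$-arithmetic Minkowski lemma (or a first-minimum estimate deduced from it); the remaining reduction via Proposition \ref{connection1}, including the device of shrinking $d$ to absorb constants, is sound and matches the paper.
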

\begin{proof}
	There exists $d>c$ such that condition(2) in Proposition \ref{prop1} does not hold. So there exists a $1\leq j\leq (n+1)$ such that there exists a sequence of natural numbers $t_i\to \infty$ and corresponding $\Delta_i$ of rank $j$ such that 
	$$
	\sup_{x\in B\cap \supp\mu}\cov (g_{t_i} u_{\f(x)}\Delta_i)\leq p^{-(\rk\Delta_i)dt_i}=p^{-jdt_i}.
	$$
	Now consider the ball \begin{align*}
	D&=D_\infty\times D_p\\
	&=\left\{ x^\infty\in\R^{n+1}\left| ||x^\infty||_\infty\leq p^{-dt_i} \right.\right\}\times
	\left\{ x^{(p)}\in\Q_p^{n+1}\left| \begin{aligned}
	&|x^{(p)}_1|_p\leq 1\\
	&|x^{(p)}_2|_p\leq 1\\
	&\vdots \\
	&|x^{(p)}_{n+1}|_p\leq 1
	\end{aligned}\right.\
	\right	\}.
	\end{align*}
	Denote $\Delta=g_{t_i}u_{\f(x)}\Delta_i $ and $ D\cap \Q_S\Delta=D_1$, a ball in $\Q_S\Delta$. Then 
	$\lambda_S(D_1)=\mu_S(\pi\inv(D_1))$ where $\mu_S$ is the Haar measure on $\Q_S^j$ and $$\pi:\Q_S^j\mapsto\Q_S\Delta=\Q_Sv_1+\cdots+\Q_Sv_j$$ where the $v_1,v_2,\cdots,v_j$ are taken such that $v_1^{\infty},\cdots,v_j^{\infty}$ form a orthonormal basis of $(\Q_s\Delta)_\infty$ and $$(\Q_S\Delta)_p\cap \Z_p^{n+1}=\Z_pv_1^{(p)}+\cdots+\Z_pv_j^{(p)}.$$ Note that $\lambda_S$ is the normalized Haar measure on $\Q_S\Delta$. Now consider
	\begin{align*}
	\pi\inv D_1=&\left\{ x\in \Q_s^j\left|\begin{aligned}
	&\Vert x_1^\infty v_1^\infty+\cdots+x_j^\infty v_j^\infty\Vert_\infty\leq p^{-dt_i}\\
	&|(x_1^{(p)}v_1^{(p)}+\cdots+x_j^{(p)}v_j^{(p)})_k|_p\leq 1 \ \forall k=1,\cdots,n+1
	\end{aligned}\right.  \right\},\\
	&=\tilde D_1^\infty\times\tilde D_1^{(p)}
	\text{ where, }\\
	\tilde D_1^\infty=&\left\{x^\infty \in \R^j\left|||x_1^\infty v_1^\infty+\cdots+x_j^\infty v_j^\infty||_\infty\leq p^{-dt_i}\right. \right\}\\
	\tilde D_1^{(p)}=&\left\{x^{(p)}\in\Q_p^j\left|
	\begin{aligned}
	&|(x_1^{(p)}v_1^{(p)}+\cdots+x_j^{(p)}v_j^{(p)})_k|_p\leq 1 \ \forall k=1,\cdots,n+1
	\end{aligned}
	\right.\right \}  .\end{align*}
	Since $v_1^\infty,\cdots,v_j^\infty $ are orthonormal, we have that 
	$\mu_\infty(\tilde D_1^\infty)=2^j.p^{-jdt_i}$.\\
	
	 \noindent On the other hand, since $B(0, 1)\times B(0,1)\times\cdots B(0,1)\subset\tilde D_1^{(p)}$,  $$1\leq\mu_p(\tilde D_1^{(p)}).$$
	So $$\mu(\pi\inv D_1)=\mu_\infty(\tilde D_1^\infty)\times \mu_p(\tilde D_1^{(p)})\geq 2^jp^{-jdt_i},$$ 
	and this gives  
	$$\lambda_S(D_1)\geq 2^j p^{-jdt_i}\geq 2^j\cov(g_{t_i}u_{\f(x)}\Delta_i)=2^j\cov(\Delta) \ \forall x\in B\cap \supp\mu.$$ 
	Then by the $S$-arithmetic version of Minkowski's theorem (\cite{KT}, Lemma 7.7) there is a nonzero vector $v\in g_{t_i}u_{\f(x)}\Delta_i \cap D_1$ i.e. there exists $\tilde \bq=(q_0,q_1,\cdots,q_n)\in(\Z[1/p])^{n+1} $ such that 
	\begin{equation}
	p^{-\frac{t_i}{n+1}}\Vert\tilde\bq\Vert_\infty \leq p^{-dt_i},
	\end{equation}
	and
	\begin{equation}\label{p-equations}
	\begin{aligned}
	&p^{t_i}|q_0+q_1f_1(x)+\cdots+f_n(x)|_p\leq 1\\
	&|q_1|_p\leq 1\\
	&\vdots\\
	&|q_n|_p\leq 1.
	\end{aligned}
	\end{equation}
 Therefore $$\begin{aligned}
	&p^{-\frac{t_i}{n+1}}\Vert\tilde \bq\Vert_\infty\Vert\bq\Vert_p\leq p^{-\frac{t_i}{n+1}}\Vert\tilde \bq\Vert_\infty \leq p^{-dt_i}
	\text{ and }\\
	&p^{\frac{nt_i}{n+1}}\vert q_0+\bq.\f(x)\vert_p\Vert\tilde{\bq}\Vert_\infty\leq p^{-dt_i}
	\end{aligned}$$ holds for infinitly many $t_i\in\N$.
	Hence by equivalence (\ref{connection1})  $$
	\forall \  x\in\supp \mu \cap B \text{ such that } \f(x)\in\mathcal W^{p}_{v_{d}} \text{ for some } d >c.$$ Thus there exists $v_{d}>v$ such that $$
	\f(B\cap\supp\mu)\subset \mathcal{W}^{p}_{v_d}.$$
\end{proof}

Throughout the rest of the paper we are going to denote $\mathcal{R}=\Q_p\times\R$ and $\mathcal{D}=\Z[1/p]$. One can associate to any nonzero submodule $\Delta\subset \mathcal{D}^{n+1}$ of rank $j$,  an element $\bw$ of $\bigwedge^j(\mathcal{D}^{n+1})$ such that $\cov(\Delta)=c(\bw)$ and $\cov(g_tu_{\by}\Delta)=c(g_tu_{\by}\bw)$.  We take
$\be_0,\be_1,\cdots,\be_n\in\mathcal{R}^{n+1}$ as the standard basis where $\be_i=(\be_i^p,\be_i^\infty)$ and $\{\be_i^p\} ,\{\be_i^\infty\}$ are the standard basis of $\Q_p^n$, $\R^n$ respectively. We will use the standard basis $\{\be_I=\be_{i_1}\wedge\cdots\wedge \be_{i_j}~|I\subset\{0,\cdots,n\}\text{ and } i_1<i_2<\cdots<i_j \}$ of $\bigwedge^j\mathcal{R}^{n+1}$. Thus we can write any $\bw\in\mathcal{D}^{n+1}$ as $\bw=\sum w_I\be_I$, where $w_I\in\mathcal{D}$.
Let us note the action of unipotent flows on the coordinates of $\bw$. 
\begin{enumerate}
	\item $u_{\by}^p$ leaves $\be_0^p$ invariant and sends $\be_i^p$ to $y_i\be_0^p+\be_i^p$ for $i\geq 1$.\\
	\item $u_{\by}^\infty =I_{n+1}$ leaves everything invariant.
\end{enumerate}
Therefore $$
u_\by^p(\be_I^p)=\left\{\begin{aligned}
&\be_I^p  &\text{ if } 0\in I\\
&\be_I^p+\sum_{i\in I} \pm y_i\be_{I\setminus\{i\}\cap\{0\}} & \text{ if } 0\notin I.
\end{aligned}
\right.$$
Observe that under the action of $g_t^p$, $\be_i^p$s' are invariant for $i\geq 1$ and $\be_0^p$ is an eigenvector with eigenvalue $p^{-t}$. Therefore
$$
g_t^pu_\by^p(\be_I^p)=\left\{\begin{aligned}
&p^{-t}\be_I^p  &\text{ if } 0\in I\\
&\be_I^p+p^{-t}\sum_{i\in I} \pm y_i\be_{I\setminus\{i\}\cap\{0\}} & \text{ if } 0\notin I.
\end{aligned}
\right.$$
On the otherhand $u_{\by}^\infty = \Id$ and each $\be_i^\infty$ is an eigenvector with eigenvalue $p^{-\frac{t}{n+1}}$. Thus 
$$
g_t^\infty u_{\by}^\infty\be_I^\infty=p^{-\frac{tj}{n+1}}\be_I^\infty.
$$
Therefore for $\bw\in\bigwedge^j(\mathcal{D}^{n+1}), \bw=\sum w_I\be_i$ where $w_I\in\mathcal{D}$ we get 
\begin{equation}
(g_tu_\by\bw)^p=\sum_{0\notin I}w_I\be_I^p+p^{-t}\sum_{0\in I}\big(w_I+(\sum_{i\notin I}\pm w_{I\setminus\{0\}\cup\{i\}}y_i)\big)\be_I^p,
\end{equation}
 and \begin{equation}
 (g_tu_{\by}\bw)^\infty=p^{-\frac{tj}{n+1}}\sum w_I\be_I^\infty.
 \end{equation}

It will be convenient for us to use the following notations, $$\bc(\bw)=\begin{pmatrix}
\bc(\bw)_0\\
\bc(\bw)_1\\
\vdots\\
\bc(\bw)_n
\end{pmatrix},$$
where $\bc(\bw)_i=\sum\limits_{\substack{J\subset\{1,\cdots,n\} \\\# J=j-1}} w_{J\cup\{i\}}\be^p_J\in \bigwedge^{j-1}(V_0)$ and $V_0$ is the subspace of $\Q_p^{n+1}$ generated by $\be^p_1,\cdots,\be^p_n$. 
We may therefore write 
\begin{equation}
(g_tu_\by\bw)^p=\sum_{0\notin I}w_I\be_I^p+p^{-t}\big(\be_0^p\wedge\sum_{i=0}^n y_i\bc(\bw)_i\big)\\
=\pi(\bw)+p^{-t}\be_0^p\wedge\tilde{\by}\bc(\bw),
\end{equation}
where $y_0=1$ and $\pi$ is the orthogonal projection from $\bigwedge ^j(\Q_p^{n+1})\mapsto \bigwedge^j(V_0)$.
If $\bw$ corresponds to $\Delta\subset\mathcal{D}^{n+1}$, a submodule of rank $j$ then we have that $$\begin{aligned}
\cov(g_tu_{\by}\Delta) &=c(g_t{u_\by}\bw)\\&=
\max\bigg( p^t\Vert\sum_{i=0}^n y_i \bc(\bw)_i\Vert_p, \Vert \pi(\bw)\Vert_p,\bigg).p^{-\frac{tj}{n+1}}\Vert \bw\Vert_\infty\\
&=\max\bigg( p^{t-\frac{tj}{n+1}}\Vert \sum y_i\bc(\bw)_i \Vert_p\Vert\bw\Vert_\infty, p^{-\frac{tj}{n+1}}\Vert \pi(\bw)\Vert_p\Vert \bw\Vert_\infty\bigg).
\end{aligned}
$$
Thus,
\begin{equation}\label{covolume}
	\begin{aligned}
	&\sup_{x\in B\cap \supp\mu} \cov (g_tu_{\f(x)}\Delta)\\
	&=\max\bigg( p^{t-\frac{tj}{n+1}}\sup_{x\in B\cap \supp\mu}\Vert \tilde\f(x)\bc(\bw) \Vert_{p}\Vert\bw\Vert_\infty, p^{-\frac{tj}{n+1}}\Vert \pi(\bw)\Vert_{p}\Vert \bw\Vert_\infty\bigg),
	\end{aligned}
\end{equation}
where $\tilde{\f}=(1,f_1,\cdots,f_n)$. Note that condition (\ref{A2}) can be written as 
\begin{equation}
\forall~d > c ~ ~\exists ~T>0 \text{ such that } \forall~t\geq T(d), \forall~ j=1, \cdots, n 
\text{ and } \forall ~ \bw.
\end{equation}
Now suppose that the $\Q_p$-span of the restrictions of $1, f_1, \cdots, f_n$ to $B\cap \supp\mu$ has dimension $s+1$ and choose $g_1, \cdots, g_s :B\cap\supp \mu\mapsto \Q_p$ such that $1, g_1, \cdots, g_s$ form a basis of the space.  Therefore there exists a matrix $R=(r_{i,j})_{(s+1)\times(n+1)} $ such that $\tilde{\f}(x)=\tilde{\bg}(x)R ~ \forall~ x\in B\cap \supp \mu$ where $\tilde{\bg}=(1, g_1,\cdots, g_s)$. We can rewrite $$\sup_{x\in B\cap \supp\mu}\Vert \tilde\f(x)\bc(\bw) \Vert_{p}=\sup_{x\in B\cap \supp\mu}\Vert \tilde\bg(x)R\bc(\bw) \Vert_{p}.$$  
Thus we have that (\ref{A2}) is equivalent to
\begin{equation}\label{covolume1}\begin{aligned}
& \text{ for any } d>c ~
\exists ~T=T(d) >0 \text{ such that for any integer  }t\geq T \\& \forall~j=1,\cdots, n \text{ and } \forall~\bw\in \bigwedge^{j}\mathcal{D}^{n+1},\text{ one has } \\&
\max\bigg( p^{t-\frac{tj}{n+1}}\Vert R\bc(\bw) \Vert_{p}\Vert\bw\Vert_\infty, p^{-\frac{tj}{n+1}}\Vert \pi(\bw)\Vert_{p}\Vert \bw\Vert_\infty\bigg)\geq p^{-jdt},
\end{aligned}
\end{equation} by equivalence of norms since $1, g_1, \cdots, g_s$ are linearly independent.
In order to get rid of the auxiliary variable $t$ we want to apply lemma \ref{connection}. Consider the components of $R\bc(\bw)$:
$$\begin{aligned}
(R\bc(\bw))_i &=\sum_{k=0}^nr_{ik}\bc(\bw)_i\\
&=\sum_{k=0}^nr_{ik}\sum\limits_{\substack{J\subset\{1,\cdots,n\} \\\# J=j-1}} w_{J\cup\{k\}}e_J^p\\
&=\sum\limits_{\substack{J\subset\{1,\cdots,n\} \\\# J=j-1}}\big(\sum_{k=0}^n r_{ik} w_{J\cup\{k\}}\big)e_J^p.\end{aligned}
$$
So we have that $$\Vert R\bc(\bw)\Vert_p=\max_{i=0, \cdots, s}\max\limits_{\substack{J\subset\{1,\cdots,n\} \\\# J=j-1}}\vert\sum_{k=0}^n r_{ik}w_{J\cup\{k\}}\vert_p.
$$
Now consider the set $$\begin{aligned}
\mathcal{E}&:=\big\{(\Vert R\bc(\bw)\Vert_p\Vert\bw\Vert_\infty, \Vert\pi(\bw)\Vert_p\Vert\bw\Vert_\infty)~\vert~ \bw\in\mathcal{S}_{n+1, j}\big\}\\&=\big\{(\max_{i=0, \cdots, s}\max\limits_{\substack{J\subset\{1,\cdots,n\} \\\# J=j-1}}\vert\sum_{k=0}^n r_{ik}w_{J\cup\{k\}}\vert_p\Vert\bw\Vert_\infty, \Vert\pi(\bw)\Vert_p\Vert\bw\Vert_\infty)~|~\bw\in\mathcal{S}_{n+1, j}\big\}.\end{aligned}
$$
\begin{lemma}
	The set $\mathcal{E} $ as above satisfies the hypotheses of Lemma \ref{connection}.
\end{lemma}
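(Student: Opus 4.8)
The plan is to verify the two bulleted hypotheses of Lemma \ref{connection} for $\mathcal{E}\subset\R_{\geq 0}^2$. The second one I would dispose of first, by a scaling argument parallel to the proof of Proposition \ref{connection1}: if $(0,z)\in\mathcal{E}$ is realized by $\bw\in\mathcal{S}_{n+1,j}$, then $\bw\neq 0$ forces $\|\bw\|_\infty>0$ and hence $\|R\bc(\bw)\|_p=0$, i.e.\ $R\bc(\bw)=0$; for any $u\in\N$ with $p\nmid u$ the vector $u\bw$ again lies in $\mathcal{S}_{n+1,j}$, and $R\bc(u\bw)=uR\bc(\bw)=0$, $\|\pi(u\bw)\|_p=|u|_p\|\pi(\bw)\|_p=\|\pi(\bw)\|_p$, $\|u\bw\|_\infty=u\|\bw\|_\infty$, so $(0,uz)\in\mathcal{E}$ for infinitely many natural numbers $u$.

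The real content is the first hypothesis, which is the exterior-power analogue of Lemma \ref{setproperty}. Given $(x_k,z_k)\in\mathcal{E}$ with $z_k$ bounded and $x_k\to 0$, realized by $\bw_k=\sum_I w_{k,I}\be_I^p\in\mathcal{S}_{n+1,j}\setminus\{0\}$, I would first show that $\|\bw_k\|_p\|\bw_k\|_\infty$ is bounded. The obstacle here --- and the step I expect to be the main difficulty --- is that the second coordinate $z_k=\|\pi(\bw_k)\|_p\|\bw_k\|_\infty$ only controls the coordinates $w_{k,I}$ with $0\notin I$, whereas $\|\bw_k\|_p$ also involves those with $0\in I$. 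To bridge this I would use the normalization $\tilde{\f}=\tilde{\bg}R$: since $1,g_1,\dots,g_s$ are linearly independent, the first column of $R$ must be $\be_0$, i.e.\ $r_{00}=1$ and $r_{i0}=0$ for $i\geq 1$. Then the $\be_J^p$-coefficient of $\big(R\bc(\bw)\big)_0=\sum_{k=0}^n r_{0k}\,\bc(\bw)_k$ equals $w_{J\cup\{0\}}+\sum_{k\geq 1}r_{0k}\,w_{J\cup\{k\}}$, and since each $w_{J\cup\{k\}}$ with $k\geq 1$ is a coordinate of $\pi(\bw)$, the ultrametric inequality gives $\|\bc(\bw)_0\|_p\leq\max\big(\|R\bc(\bw)\|_p,\,C\|\pi(\bw)\|_p\big)$ with $C:=\max_{1\leq k\leq n}|r_{0k}|_p$ a fixed constant. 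Because $\|\bw\|_p=\max\big(\|\pi(\bw)\|_p,\|\bc(\bw)_0\|_p\big)$, multiplying through by $\|\bw\|_\infty$ gives $\|\bw\|_p\|\bw\|_\infty\leq\max(x,Cz)$ for every $(x,z)\in\mathcal{E}$, and hence $\|\bw_k\|_p\|\bw_k\|_\infty\leq M$ for some $M$.

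Once this bound is in hand I would finish exactly as in Lemma \ref{setproperty}. Writing each nonzero coordinate $w_{k,I}=p^{m_{k,I}}b_{k,I}$ with $b_{k,I}\in\Z$, $p\nmid b_{k,I}$, one has $|b_{k,I}|_\infty=|w_{k,I}|_p|w_{k,I}|_\infty\leq M$, so the $b_{k,I}$ lie in a fixed finite set, and with $m_k:=\min\{m_{k,I}:w_{k,I}\neq 0\}$ the inequality $\|\bw_k\|_p\|\bw_k\|_\infty\geq p^{m_{k,I}-m_k}$ forces $0\leq m_{k,I}-m_k\leq\log_p M$. Thus $\bw_k=p^{m_k}\bw_k'$ where $\bw_k'\in\bigwedge^j\Z^{n+1}$ has integer coordinates of absolute value at most $M^2$, so $\bw_k'$ ranges over a finite set; since $\bc$ and $R$ are linear and $|p^{m_k}|_p|p^{m_k}|_\infty=1$, one has $x_k=\|R\bc(\bw_k)\|_p\|\bw_k\|_\infty=\|R\bc(\bw_k')\|_p\|\bw_k'\|_\infty$, which therefore takes only finitely many values. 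Hence $x_k\to 0$ implies $x_k=0$ for all but finitely many $k$, which is the first hypothesis, and together with the scaling argument above this verifies that $\mathcal{E}$ satisfies the hypotheses of Lemma \ref{connection}. As noted, the passage to the bounded regime is the crux: the finiteness argument simply cannot be started without first controlling the $0\in I$ coordinates, and this is precisely what the structure of $R$ supplies.
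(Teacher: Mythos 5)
Your proof is correct and follows essentially the same route as the paper: you use $r_{00}=1$, $r_{i0}=0$ (coming from the linear independence of $1,g_1,\dots,g_s$) together with the ultrametric inequality to control the coordinates $w_{J\cup\{0\}}$ and bound $\Vert\bw_k\Vert_p\Vert\bw_k\Vert_\infty$, then conclude by the same ``finitely many options up to powers of $p$'' argument as in Lemma \ref{setproperty} (which you inline on $\bw_k$ rather than citing it for the sub-vectors $\tilde\bw_J^{(m)}$), and you verify the second hypothesis by the identical scaling by $u\in\N$ with $p\nmid u$. The only nitpick is that your bound should read $\Vert\bw\Vert_p\Vert\bw\Vert_\infty\leq\max(x,\max(1,C)z)$, which changes nothing.
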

\begin{proof}
	Let $\bw_m\in \mathcal{S}_{n+1, j}$ be a sequence such that $$\Vert\pi(\bw_m)\Vert_p\Vert\bw_m\Vert_\infty\leq M~ \forall~ m$$ for some $M>0$ and $$\max_{i=0, \cdots, s}\max\limits_{\substack{J\subset\{1,\cdots,n\} \\\# J=j-1}}\vert\sum_{k=0}^n r_{ik}w^{(m)}_{J\cup\{k\}}\vert_p\Vert\bw_m\Vert_\infty\to 0$$ as $m\to\infty$ where $\bw_m=\sum w^{(m)}_Ie_I$. 
	\noindent
	For $J\subset \{1, \cdots, n\},$ such that $ \# J=j-1$, denote $\tilde\bw_J^{(m)}:=(w^{(m)}_{J\cup\{0\}}, \cdots, w^{(m)}_{J\cup\{n\}}) $.
	 We have that for every $m, J $ and $k=1, \cdots, n,$
	$$\begin{aligned}&\vert w^{(m)}_{J\cup\{k\}}\vert_p\Vert \tilde{\bw}^{(m)}_J\Vert_\infty\leq\vert w^{(m)}_{J\cup\{k\}}\vert_p\Vert \bw_m\Vert_\infty\leq M.
	\end{aligned}$$
	 
	Moreover for $\varepsilon>0 ~\exists~N_{\varepsilon}\in\N$ such that $\forall~m\geq N_{\varepsilon} $, $$\begin{aligned}
	& \vert r_{00} w^{m}_{J\cup\{0\}}\vert_p\Vert\bw_m\Vert_\infty\\
	&\leq \max\big(\varepsilon,\big \vert\sum_{k=1}^n r_{im}w^{(m)}_{J\cup\{k\}}\big\vert_p\Vert\bw_m\Vert_\infty\big)\\
	&\leq \max\big(\varepsilon, \Vert R\Vert_p\Vert\pi(\bw_m)\Vert_p\Vert\bw_m\Vert_\infty\big)\\&
	\leq\max(\varepsilon, \Vert R\Vert_p M).
	\end{aligned} $$ 
	Since $\tilde\f(x) =\tilde{\bg}(x) R ~~ \forall~x\in B\cap\supp\mu$ we have $r_{00}=1 \text{ and } r_{i0}=0$ otherwise.
	This implies that 
	$$\vert w^{(m)}_{J\cup\{0\}}\vert_p\Vert \tilde{\bw}^{(m)}_J\Vert_\infty\leq\vert w^{m}_{J\cup\{0\}}\vert_p\Vert\bw_m\Vert_\infty\leq M_1$$ 
	for some $M_1>0$ and for every $J$. Therefore, for 
	$\tilde{\bw}_J^{(m)}$, 
	$$\Vert \tilde{\bw}_J^{(m)}\Vert_p \Vert\tilde{\bw}_J^{(m)}\Vert_\infty\leq M_1,$$ 
	for some $M_1>0$ and 
	$$\vert\sum_{k=0}^n r_{ik}w^{(m)}_{J\cup\{k\}}\vert_p\Vert\tilde{\bw}^{(m)_J}\Vert_\infty\to 0.$$ 
	Now applying Lemma \ref{setproperty} we can conclude that first hypothesis is satisfied. The second hypothesis is satisfied since $(0, \Vert R\bc(\bw)\Vert_p\Vert\bw\Vert_\infty)\in\mathcal{E}$ implies that for $u\in\N\text{ with } p\nmid u, (0, u\Vert R\bc(\bw)\Vert_p\Vert\bw\Vert_\infty)\in\mathcal{E}$.
\end{proof}
Here, in terms of Lemma \ref{connection}, $a= \frac{n+1-j}{n+1}, b=\frac{j}{n+1}$. Since the set $\mathcal{E}$ satisfies the hypotheses of Lemma \ref{connection}, we conclude that the condition (\ref{covolume1}) is equivalent to
\begin{equation}\label{covolume2}
\begin{aligned}
&\forall ~j=1,\cdots,n \text{ and }\forall~\bw\in\mathcal{S}_{n+1,j}, \forall~d>c=\frac{v-n}{(n+1)(v+1)}\text{ where } v\geq n \\& \exists~ u_d=\frac{\frac{n+1-j}{n+1}+dj}{\frac{j}{n+1}-dj}>\frac{\frac{n+1-j}{n+1}+cj}{\frac{j}{n+1}-cj}=\frac{v-j+1}{j} \text{ such that for arbitrarily large }\\& \Vert\pi(\bw)\Vert_p\Vert\bw\Vert_\infty,  \text{ we have } \Vert R\bc(\bw)\Vert_p\Vert\bw\Vert_\infty>(\Vert\pi(\bw)\Vert_p\Vert\bw\Vert_\infty)^{-u_d}.
\end{aligned}
\end{equation}
Moreover $\lim_{k \to \infty}d_k  = c$ implies that  $\lim_{k \to \infty} u_{d_k} = \frac{v-j+1}{j}$. Therefore condition (\ref{covolume2}) is equivalent to \begin{equation}\label{covolume3}
\begin{aligned}
&\forall~ j=1,\cdots, n~, \forall u>\frac{v-j+1}{j} ~\text{ and } \forall \bw\in\mathcal{S}_{n+1,j}
,  \text{ for all arbitrary} \\&\text{ large }\Vert\pi(\bw)\Vert_p\Vert\bw\Vert_\infty  \text{ we have } \Vert R\bc(\bw)\Vert_p\Vert\bw\Vert_\infty>(\Vert\pi(\bw)\Vert_p\Vert\bw\Vert_\infty)^{-u}.
\end{aligned}
\end{equation}
The proof of the Theorem \ref{main theorem} now goes as in \cite{Kleinbock-exponent}, we repeat it for the sake of completeness. Note that we have that $R$ is a matrix depending on the ball $B$, the measure $\mu$ and the map $\f$ such that 
$$(\ref{covolume3}) \text{ holds } \iff \text{ so does } (\ref{covolume1}) \iff \text{ so does } (\ref{A2}).$$

 Suppose \begin{equation}{\label{subspace}}\mathcal{L}=<\f(B\cap \supp\mu)>_a.\end{equation} Let $\dim\mathcal{L}=s$ and let
\begin{equation}{\label{R-type}}\mathbf{h} : \Q_p^s\to \mathcal{L} \text{ be an affine isomorphism, and }\tilde{\mathbf{h}}(\bx)= \tilde{\bx}R, \bx\in\Q_p^s,
\end{equation} 
where as usual we have that $\tilde{\mathbf{h}}:=(1,h_1, \cdots, h_n) $ and $\tilde{\bx}:=(1, x_1, \cdots, x_s)$. Then $\bg= \mathbf{h}\inv\circ \f$ generates the space of $\Q_p$ span of the restrictions of $1, f_1, \cdots, f_n$ to $B\cap \supp\mu$ and satisfies $\tilde{\f}(x)=\tilde{\mathbf{g}}(x)R ~\forall ~x\in B\cap\supp\mu$. Therefore condition (\ref{covolume3})$\iff$ (\ref{covolume1}) $\iff  $condition (\ref{A2}) becomes a property of the subspace and in particular $R$ can be chosen uniformly for all measures $\mu$, ball B and measure $\mu$ and $\f$ the function as long as (\ref{subspace}) holds.
\begin{theorem}\label{main theorem}
	 Let $\mu$ be a Federer measure on a Besicovitch metric space $X, \mathcal{L}$ an affine subspace of $\Q_p^n$, and let $\f : X\to \mathcal{L}$ be a continuous map such that $(\f, \mu)$ is good and nonplanar i.e (\ref{subspace})  holds for all open balls B with $\mu(B)>0$. Then the following are equivalent for $v\geq n$ and $c_v=\frac{v-n}{(n+1)(v+1)},$\\
	 \rm{(i)} \begin{equation}\label{c1}
	\big\{x~\in\supp\mu ~| \f(x)\notin\mathcal{W}_u^{p}\big\} \text{ is nonempty for any } u>v.
	 \end{equation}
	 \rm{(ii)}\begin{equation}\label{c2}
	  w_{p}(\f_*\mu)\leq v.
	 \end{equation}
	 \rm{(iii)}\begin{equation}\label{c3}
	(\ref{covolume3}) \text{ holds for some } (\implies\text{ for any }) ~R \text{ satisfying } (\ref{R-type}).
	 \end{equation}
	 \rm{(iv)}  \begin{equation}\label{c5}
	(\ref{covolume1}) \text{ holds for some } (\implies\text{ for any }) ~R \text{ satisfying } (\ref{R-type}).
	 \end{equation}	 
\end{theorem}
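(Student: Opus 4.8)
The plan is to prove the four-way equivalence by establishing a cycle of implications $(iv)\Rightarrow(iii)\Rightarrow(ii)\Rightarrow(i)\Rightarrow(iv)$, leveraging the long chain of reductions carried out just before the theorem statement. By the final paragraph before the theorem, we already know that conditions \eqref{covolume1}, \eqref{covolume3} and \eqref{A2} are all equivalent, and — crucially — that the matrix $R$ in \eqref{R-type} can be chosen \emph{uniformly}, depending only on the affine subspace $\mathcal L = \langle \f(B\cap\supp\mu)\rangle_a$ and not on the particular ball $B$, measure $\mu$, or parametrizing map $\f$. This uniformity is what makes the equivalences genuinely statements about $\mathcal L$. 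So the implications $(iii)\Leftrightarrow(iv)$ are already in hand, and it remains to weave in $(i)$ and $(ii)$.

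First I would handle $(iv)\Rightarrow(ii)$: this is essentially Proposition \ref{prop1}. Indeed, $(iv)$ says \eqref{covolume1} holds, which (via the displayed equivalence just before the theorem) is the same as condition \eqref{A2}, i.e. hypothesis (ii) of Proposition \ref{prop1}; hypothesis (i) of that proposition — that $x\mapsto\cov(g_tu_{\f(x)}\Delta)$ is $(C,\alpha)$-good — follows from the assumption that $(\f,\mu)$ is good together with Corollary \ref{affgood} (one expands $\cov(g_tu_{\f(x)}\Delta)$ in coordinates as in \eqref{covolume} and sees it is a max of functions that are linear combinations of the good coordinate functions, hence good, using that maxima and scalar multiples of $(C,\alpha)$-good functions are again good, possibly with adjusted constants). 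Since Proposition \ref{prop1} is local (it bounds $w_p(\f_*\mu|_B)$ on each ball $B$), one then covers $\supp\mu$ by countably many such balls using the Besicovitch property and the fact that a countable union of null sets is null, concluding $w_p(\f_*\mu)\le v$, which is $(ii)$.

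Next, $(ii)\Rightarrow(i)$ is the contrapositive of a direct observation: if $(i)$ fails, then there is some $u>v$ with $\f(x)\in\mathcal W_u^p$ for \emph{every} $x\in\supp\mu$; but then $\f_*\mu$ is supported on $\mathcal W_u^p$, so $w_p(\f_*\mu)\ge u>v$, contradicting $(ii)$. Finally, for $(i)\Rightarrow(iv)$ — really the heart of the matter — I would argue by contraposition using Lemma \ref{necessary}: suppose $(iv)$ fails, i.e. \eqref{covolume1}, equivalently \eqref{A2}, equivalently condition (ii) of Proposition \ref{prop1}, does not hold. Lemma \ref{necessary} then gives directly that $\f(B\cap\supp\mu)\subset\mathcal W_u^p$ for some $u>v$, which contradicts $(i)$ applied on that ball $B$. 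Here one must be slightly careful that $(i)$ is phrased globally on $\supp\mu$ while Lemma \ref{necessary} is local to a ball $B$; but since $(iv)$ is by the uniformity of $R$ a property of the ambient subspace $\mathcal L$, its failure on any single ball $B$ with $\mu(B)>0$ is the failure of the corresponding statement, and Lemma \ref{necessary} then puts all of $\f(B\cap\supp\mu)$ into $\mathcal W_u^p$, contradicting the nonemptiness in $(i)$ once we note $\f(B\cap\supp\mu)\subset\f(\supp\mu)$.

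The main obstacle I anticipate is bookkeeping the interplay between the \emph{local} statements (Proposition \ref{prop1}, Lemma \ref{necessary}, and the reduction chain, all phrased on a fixed ball $B$) and the \emph{global} statements $(i)$, $(ii)$ about $\supp\mu$ and $\f_*\mu$. The resolution is exactly the uniformity of $R$ established in the paragraph preceding the theorem: because $R$ depends only on $\mathcal L=\langle\f(B\cap\supp\mu)\rangle_a$ and the nonplanarity hypothesis forces this affine span to equal $\mathcal L$ for \emph{every} ball $B$ with $\mu(B)>0$, conditions \eqref{covolume1} and \eqref{covolume3} are genuinely independent of the choice of $B$, so it suffices to verify or refute them on any one such ball and then transfer the conclusion globally via countable covers and the Borel–Cantelli argument inside Proposition \ref{prop1}. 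Once this is set up cleanly the rest is assembling the cited results; I would present the cycle $(iv)\Rightarrow(ii)\Rightarrow(i)\Rightarrow(iv)$ and note $(iii)\Leftrightarrow(iv)$ separately, remarking that this is exactly the structure of Kleinbock's argument in \cite{Kleinbock-exponent} adapted to the $\Z[1/p]$-exponent.
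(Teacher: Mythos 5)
Your proposal is correct and follows essentially the same route as the paper: the equivalence of \eqref{covolume1}, \eqref{covolume3} and \eqref{A2} together with the uniformity of $R$, Proposition \ref{prop1} for the implication towards \eqref{c2}, the definition of $w_p(\f_*\mu)$ for \eqref{c2}$\Rightarrow$\eqref{c1}, and Lemma \ref{necessary} by contraposition for \eqref{c1}$\Rightarrow$\eqref{c5}. The only cosmetic difference is that you invoke Corollary \ref{affgood}, which is not needed here since goodness of $(\f,\mu)$ is a hypothesis and, as you also note, $\cov(g_tu_{\f(x)}\Delta)$ is a maximum of norms of linear combinations of $1,f_1,\dots,f_n$.
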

	\begin{proof}
		
We have already observed that (\ref{c5}) holds if and only if so does (\ref{c3}),  
and that  (\ref{c2}) implies (\ref{c1}) by definition. It remains to show that (\ref{c3}) $\implies$ (\ref{c2}) and that (\ref{c1}) $\implies$ (\ref{c5}). 
Assume (\ref{c3}) and since $\mu$ is Federer and $(\f, \mu)$ is good, we have for $\mu$-a.e $x\in X$ has a neighbourhood $V$ such that $(\f, \mu)$ good and $\mu$ is $D$-federer on $V$. Choose a ball $B= B(x, r) $ of positive measure such that the dilated ball $\tilde B= B(x, 3^{n+1}r)$ is contained in $V$.  Since we have already noticed  in (\ref{covolume1}) that $\cov(g_tu_{\f(x)}\Delta) $ is $\max$ of norm of linear combinations of $1, f_1, \cdots, f_n$ condition (i)of (\ref{prop1}) is satisfied. And (\ref{c3}) is equivalent to second hypothesis (\ref{A2}) in (\ref{prop1}). Therefore we can conclude (\ref{c2}).\\
		\noindent
		Suppose (\ref{c5}) does not hold then equivalently condition (\ref{A2}) does not hold and by Lemma \ref{necessary} it follows that $$f( B\cap\supp\mu)\subset \mathcal{W}^{p}_u$$ for some $u>v$.
	\end{proof}
The upshot of the last theorem is that (\ref{c3}) and (\ref{c5}) do not involve any of $\f, \mu, X$. So if conditions (\ref{c1}), (\ref{c2}) hold for some $\f, \mu, X$ satisfying the hypotheses of the Theorem then they hold any other $\f^\prime, \mu^\prime, X^\prime$ satisfying the hypotheses of this theorem. So for any two $\f, \mu, X$ and  $\f^\prime, \mu^\prime, X^\prime$ satisfying the hypotheses of this theorem, we have that $w_p(\f_*\mu) = w_p(\f^\prime_*\mu^\prime)$.

\begin{theorem}
Let $\mu$ be a Federer measure on a Besicovitch metric space $X, \mathcal{L}$ an affine subspace of $\Q_p^n$, and let $\f : X\to \mathcal{L}$ be continuous map such that $(\f, \mu)$ is good and nonplanar in $\mathcal{L}$. Then 
\begin{equation}
w_{p}(\f_*\mu)=w_{p}(\mathcal{L})= \inf\{ w_{p}(\by)~|~\by\in\mathcal{L}\}= \inf\{w_{p}(\f(x)) ~|~ x\in\supp\mu\}.
\end{equation}
\end{theorem}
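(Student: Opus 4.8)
The plan is to derive all four equalities formally from Theorem \ref{main theorem} and from the observation recorded immediately after it, which says that conditions (\ref{c3}) and (\ref{c5}) --- and hence, through the equivalences of Theorem \ref{main theorem}, also condition (\ref{c2}) --- are governed by the affine subspace $\mathcal{L}$ alone, via any matrix $R$ satisfying (\ref{R-type}), and not by the particular triple $(X,\mu,\f)$. Throughout one uses that $w_p(\by)\geq n$ for every $\by\in\Q_p^n$ (see the remark following Proposition \ref{same}), so that every exponent in sight lies in $[n,+\infty]$ and the restriction $v\geq n$ in Theorem \ref{main theorem} is harmless.

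The first step is to rewrite condition (\ref{c1}). Since $\f(x)\notin\mathcal{W}_u^{p}$ means exactly $w_p(\f(x))\leq u$, condition (\ref{c1}) at a given $v$ holds if and only if $\inf\{w_p(\f(x)):x\in\supp\mu\}\leq v$, while condition (\ref{c2}) at $v$ holds if and only if $w_p(\f_*\mu)\leq v$. Invoking the equivalence (\ref{c1})$\Leftrightarrow$(\ref{c2}) of Theorem \ref{main theorem} for every $v\geq n$, and noting that two elements of $[n,+\infty]$ which are $\leq v$ for precisely the same $v\geq n$ must coincide, one obtains $w_p(\f_*\mu)=\inf\{w_p(\f(x)):x\in\supp\mu\}$.

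The second step is to run the same argument on a tautological parametrisation. Let $\mathbf{h}:\Q_p^s\to\mathcal{L}$ be an affine isomorphism as in (\ref{R-type}) (with $s=\dim\mathcal{L}$) and let $\lambda$ be Haar measure on $\Q_p^s$; then $\Q_p^s$ is a separable ultrametric space, hence Besicovitch, $\lambda$ is Federer, $(\mathbf{h},\lambda)$ is good by Corollary \ref{affgood} (an affine isomorphism is nondegenerate in $\mathcal{L}$ at every point), and $(\mathbf{h},\lambda)$ is nonplanar in $\mathcal{L}$, since for any nonempty open $B$ the affine span of $\mathbf{h}(B)$ is the image under $\mathbf{h}$ of the affine span of $B$, namely all of $\mathcal{L}$. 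By definition $w_p(\mathbf{h}_*\lambda)=w_p(\mathcal{L})$, and Step~1 applied to $(\Q_p^s,\lambda,\mathbf{h})$ gives $w_p(\mathcal{L})=\inf\{w_p(\mathbf{h}(x)):x\in\Q_p^s\}=\inf\{w_p(\by):\by\in\mathcal{L}\}$, since $\mathbf{h}$ carries $\Q_p^s$ bijectively onto $\mathcal{L}$. Moreover the matrix $R$ attached to $\mathbf{h}$ also serves as a valid $R$ in (\ref{R-type}) for the original triple $(X,\mu,\f)$ --- setting $\bg=\mathbf{h}^{-1}\circ\f$ one has $\tilde{\f}=\tilde{\bg}R$ on $B\cap\supp\mu$ --- so by the remark after Theorem \ref{main theorem}, condition (\ref{c3}) for this $R$, and therefore condition (\ref{c2}), holds at the same values of $v$ for both triples; hence $w_p(\f_*\mu)=w_p(\mathbf{h}_*\lambda)=w_p(\mathcal{L})$.

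Combining the three equalities above, all four quantities coincide; in particular the evident inclusion $\f(\supp\mu)\subset\mathcal{L}$, which only gives $\inf\{w_p(\by):\by\in\mathcal{L}\}\leq\inf\{w_p(\f(x)):x\in\supp\mu\}$ on its own, is recovered as an equality. I do not expect any genuine obstacle beyond Theorem \ref{main theorem} itself: the delicate point is the bookkeeping in Step~2, namely applying the equivalence theorem to the tautological parametrisation of $\mathcal{L}$ and checking that a single matrix $R$ satisfying (\ref{R-type}) works simultaneously for both triples, so that the chain of equivalences in Theorem \ref{main theorem} forces $w_p(\f_*\mu)$ to be an invariant of $\mathcal{L}$.
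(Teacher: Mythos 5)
Your proposal is correct and follows essentially the same route as the paper: everything is deduced from Theorem \ref{main theorem} together with the observation that conditions (\ref{c3}), (\ref{c5}) depend only on $\mathcal{L}$ through a matrix $R$ as in (\ref{R-type}), applied both to the given triple $(X,\mu,\f)$ and to the tautological parametrisation $(\Q_p^s,\lambda,\mathbf{h})$. The only difference is bookkeeping: the paper uses the trivial chain $\inf_{\mathcal{L}}w_p\leq\inf_{\supp\mu}w_p(\f(\cdot))\leq w_p(\f_*\mu)$ and closes it with (\ref{c1})$\Rightarrow$(\ref{c2}) for $\mathbf{h}$, whereas you extract both ``measure equals pointwise infimum'' equalities directly from the equivalence (\ref{c1})$\Leftrightarrow$(\ref{c2}); both are valid.
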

\begin{proof}
 Let us take $\nu= \lambda$ (Haar measure) on $\Q_p^s$ and $\mathbf{h}$ as in (\ref{R-type}). Then by definition, $w(\mathcal{L})= w_{p}(\mathbf{h}_*\nu)$. Since $\mathbf{h}, \nu$ and $\Q_p^s$ satisfy the hypotheses of Theorem \ref{main theorem}, we have $w_{p}(\f_*\mu) = w_{p}(\mathcal{L})$ by the previous discussion. And we already have from definitions that 
 $$  \inf\{ w_{p}(\by)~|~\by\in\mathcal{L}\}\leq \inf\{w_{p}(\f(x)) ~|~ x\in\supp\mu\}\leq w_{p}(\f_*\mu).$$
 Therefore to conclude the theorem it is enough to show that $$w_p(\mathcal{L})\leq \inf\{ w_p(\by)~|~\by\in\mathcal{L}\}.$$ 
 But this follows by taking $v= \inf\{ w_p(\by)~|~\by\in\mathcal{L}\}$ and $\mathbf{h}$ in Theorem \ref{main theorem}; condition (\ref{c1}) automatically holds. 
\end{proof}
\begin{corollary}\label{w(L)}
	Let $\cL$ be an $s$-dimensional affine subspace of $\Q_p^n$. Then $$ w_{p}(\mathcal{L}) =\max(n, \inf\{v \text{ for which (\ref{covolume3})  holds for } R \text{ as in } (\ref{R-type})\}$$
\end{corollary}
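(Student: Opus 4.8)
The plan is to read the infimum off directly from the equivalence of conditions (\ref{c2}) and (\ref{c3}) in Theorem \ref{main theorem}, applied to the triple $(\mathbf{h},\nu,\Q_p^s)$ with $\nu$ Haar measure on $\Q_p^s$ and $\mathbf{h}\colon\Q_p^s\to\cL$ the affine isomorphism of (\ref{R-type}). Set $v_0:=\inf\{v \text{ for which } (\ref{covolume3}) \text{ holds for } R \text{ as in } (\ref{R-type})\}$; by the discussion preceding the corollary this is independent of the choice of $R$, and the goal is to prove $w_p(\cL)=\max(n,v_0)$.

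First I would record two elementary facts. (a) $w_p(\cL)\ge n$: by the remark following Proposition \ref{same}, $w_p(\by)\ge n$ for every $\by\in\Q_p^n$, so $\inf\{w_p(\by) \mid \by\in\cL\}\ge n$, and the preceding theorem identifies $w_p(\cL)$ with that infimum. (b) The set $S:=\{v \text{ for which } (\ref{covolume3}) \text{ holds}\}$ is upward closed: if $v'>v$, then for each $j$ the range ``$u>\frac{v'-j+1}{j}$'' is contained in the range ``$u>\frac{v-j+1}{j}$'', and since for the arbitrarily large values of $\Vert\pi(\bw)\Vert_p\Vert\bw\Vert_\infty\ge 1$ in play the quantity $(\Vert\pi(\bw)\Vert_p\Vert\bw\Vert_\infty)^{-u}$ is non-increasing in $u$, validity of (\ref{covolume3}) at $v$ forces validity at $v'$. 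Hence $S\supseteq(v_0,\infty)$.

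Now $(\mathbf{h},\nu,\Q_p^s)$ satisfies the hypotheses of Theorem \ref{main theorem} (exactly as in the proof of the preceding theorem) and $w_p(\mathbf{h}_*\nu)=w_p(\cL)$. For the upper bound, take any $v>\max(n,v_0)$; then $v\ge n$ and $v\in S$, so condition (\ref{c3}) holds at $v$ and Theorem \ref{main theorem} gives condition (\ref{c2}) at $v$, i.e.\ $w_p(\cL)=w_p(\mathbf{h}_*\nu)\le v$. Letting $v$ decrease to $\max(n,v_0)$ yields $w_p(\cL)\le\max(n,v_0)$. For the lower bound, suppose $w_p(\cL)<\max(n,v_0)$. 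Combined with (a) this forces $\max(n,v_0)>n$, hence $v_0>n$ and $w_p(\cL)<v_0$. Put $v:=w_p(\cL)$; then $v\ge n$ by (a), condition (\ref{c2}) of Theorem \ref{main theorem} holds at $v$ (since $w_p(\mathbf{h}_*\nu)=v\le v$), so condition (\ref{c3}) holds at $v$, i.e.\ (\ref{covolume3}) holds at $v<v_0$, contradicting the definition of $v_0$. Thus $w_p(\cL)\ge\max(n,v_0)$, and together with the upper bound this proves the corollary.

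The one point requiring care is fact (b) --- the genuine monotonicity of (\ref{covolume3}) in $v$ --- together with the need to keep every application of Theorem \ref{main theorem} within its range of validity $v\ge n$; this is exactly what the $\max$ with $n$ encodes, and why the lower-bound argument must first exclude $w_p(\cL)<n$ before choosing $v=w_p(\cL)$. Everything else is bookkeeping on top of the equivalences already established.
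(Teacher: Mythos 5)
Your argument is correct and follows exactly the route the paper intends: the corollary is left as an immediate consequence of Theorem \ref{main theorem} applied to the affine parametrisation $\mathbf{h}$ with Haar measure on $\Q_p^s$ (as in the proof of the preceding theorem), together with the lower bound $w_p(\by)\ge n$ and the upward monotonicity of condition (\ref{covolume3}) in $v$. Your write-up simply makes explicit the bookkeeping (monotonicity of the $u$-ranges, keeping $v\ge n$ in each application of the theorem, and the independence of the choice of $R$) that the paper leaves implicit.
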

In view of Proposition $3.1$, we have
\begin{corollary}\label{main_theorem}
	Let $\mu$ be a Federer measure on a Besicovitch metric space $X, \mathcal{L}$ an affine subspace of $\Q_p^n$, and let $\f : X\to \mathcal{L}$ be continuous map such that $(\f, \mu)$ is good and nonplanar in $\mathcal{L}$. Then 
	\begin{equation}
	w(\f_*\mu)=w(\mathcal{L})= \inf\{ w(\by)~|~\by\in\mathcal{L}\}= \inf\{w(\f(x)) ~|~ x\in\supp\mu\}.
	\end{equation}
\end{corollary}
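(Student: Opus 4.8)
This will be a direct consequence of the $w_p$-version of the theorem proved just above, once each of its four equal quantities is transported across the pointwise comparison of exponents recorded in Proposition \ref{same}. The plan is: first push the relation $w_p(\by)=w(\by)+1$ up from points of $\Q_p^n$ to the measure exponent $w(\f_*\mu)$, to $w_p(\mathcal{L})$, and to the two infima; then substitute these four identities into the $w_p$-chain and cancel the additive constant.

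First I would record the consequences of Proposition \ref{same} for the derived exponents. As subsets of $\Q_p^n$ one has $\{y:w_p(y)>v\}=\{y:w(y)>v-1\}$, so directly from the definitions $w_p(\f_*\mu)=\sup\{v:\f_*\mu(\{y:w(y)>v-1\})>0\}=1+w(\f_*\mu)$; running the same computation with Haar measure $\lambda$ on $\Q_p^s$ in place of $\f_*\mu$, and recalling that $w_p(\mathcal{L})$ and $w(\mathcal{L})$ are the corresponding exponents of $\mathbf{h}_*\lambda$ for an affine parametrization $\mathbf{h}:\Q_p^s\to\mathcal{L}$ (this is exactly the realization used in the proof of the preceding theorem), gives $w_p(\mathcal{L})=1+w(\mathcal{L})$. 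Since $v\mapsto v+1$ is order preserving it commutes with $\inf$, whence $\inf\{w_p(\by):\by\in\mathcal{L}\}=1+\inf\{w(\by):\by\in\mathcal{L}\}$ and $\inf\{w_p(\f(x)):x\in\supp\mu\}=1+\inf\{w(\f(x)):x\in\supp\mu\}$.

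Next I would invoke the preceding theorem, which asserts $w_p(\f_*\mu)=w_p(\mathcal{L})=\inf\{w_p(\by):\by\in\mathcal{L}\}=\inf\{w_p(\f(x)):x\in\supp\mu\}$; feeding in the four identities above turns this into $1+w(\f_*\mu)=1+w(\mathcal{L})=1+\inf\{w(\by):\by\in\mathcal{L}\}=1+\inf\{w(\f(x)):x\in\supp\mu\}$, and subtracting $1$ throughout is the claim. I do not expect any real obstacle here: all the substance is already contained in the $w_p$-version of the theorem and in Proposition \ref{same}. The only points that need minor care are that the normalization in the definitions of $w(\f_*\mu)$ and $w(\mathcal{L})$ is the one matching the set identity $\{w_p>v\}=\{w>v-1\}$ used above, and that when $w$ is identically $+\infty$ on a set of positive $\f_*\mu$-measure all four quantities are simultaneously $+\infty$ and the statement is trivial, so that one may assume finiteness when carrying out the arithmetic with the constant shift.
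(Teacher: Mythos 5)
Your proposal is correct and follows essentially the same route as the paper: the paper deduces this corollary from the $w_p$-version of the theorem precisely ``in view of Proposition \ref{same}'', i.e.\ by the pointwise identity $w_p(\by)=w(\by)+1$, which (as you spell out) shifts $w(\f_*\mu)$, $w(\mathcal{L})$ and the two infima each by $1$, so the $w_p$-chain of equalities descends to the $w$-chain. Your added remarks on the level-set identity and the $+\infty$ case are exactly the routine details the paper leaves implicit.
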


\section{Computation of Higher Diophantine Exponents}\label{higher}
In this section we want to  relate condition (\ref{covolume3}) in terms of Diophantine conditions of a parametrizing matrix of the $s$ dimensional subspace $\cL$. Set \begin{equation}\label{A} R= R_A :=(I_{s+1}~ ~ A)\in M_{s+1,n+1}\end{equation} where $A=(a_{ij})_{\substack{ i= 0,\cdots s\\ j= s+1,\cdots,n}}=\begin{bmatrix}
\ba_0\\ \vdots\\ \ba_s
\end{bmatrix}\in M_{s+1, n-s}$. Then we can write $$ \Vert R_A \bc(\bw)\Vert_p=\max_{i= 0,\cdots, s}\max\limits_{\substack{J\subset\{1,\cdots,n\} \\\# J=j-1}}\vert \langle (\be_i^p+\ba_i)\wedge \be_J^p, \bw\rangle\vert_p.$$
Let $\pi_{\bullet}$ be the projection from $\bigwedge^j \Q_p^{n+1}\bigoplus\bigwedge^j\R^{n+1}$ to 
$$\bigwedge^j \Q_p\langle \be^p_{s+1}, \cdots,\be^p_{n}\rangle\bigoplus\bigwedge^j \R\langle\be^\infty_{s+1}, \cdots,\be^\infty_{n}\rangle$$ 
where $F\langle \be_{s+1}, \cdots,\be_{n}\rangle$ denotes the $F$ span of $\be_{s+1}, \cdots, \be_n$, the last $n-s$ vectors of standard basis of $F^{n+1}$ for a field $F$.
\begin{lemma}\label{pidot}
		Suppose that $\Vert R_A \bc(\bw)\Vert_p \Vert \bw\Vert_\infty \leq 1$ for some $\bw \in\mathcal{S}_{n+1,j}$. Then $\Vert\bw\Vert_p\Vert\bw\Vert_\infty\ll 1+\Vert\pi_{\bullet}(\bw)\Vert_p\Vert\bw\Vert_\infty$.
	\end{lemma}
\begin{proof}
	 Suppose the smallest index of $I\subset \{1, \cdots, n\}( \# I=j )$ is $m$. If $m>s$ then we have that $$\vert \langle \be_I, \bw\rangle\vert_p\leq \Vert\pi_{\bullet}(\bw)\Vert_p.$$ On the other hand, when $m\leq s$ we have 
	 $$\begin{aligned}
	 	&\vert \langle \be_I^p, \bw\rangle\vert_p\Vert\bw\Vert_\infty \\
	 	&< \vert \langle (\be^p_m+\ba_m)\wedge\be^p_{I\setminus\{m\}}, \bw\rangle\vert_p\Vert\bw\Vert_\infty+ \vert \langle \ba_m\wedge\be^p_{I\setminus\{m\}}, \bw\rangle\vert_p\Vert\bw\Vert_\infty\\
	 	& <\Vert R_A \bc(\bw)\Vert_p \Vert \bw\Vert_\infty+ \max_{i=0, \cdots, s}\Vert\ba_i\Vert_p\max_{i= 0,\cdots, s}\max\limits_{\substack{J\subset\{m+1,\cdots , n\} \\ \# J=j}}\vert \langle \be^p_J, \bw\rangle\vert_p\Vert\bw\Vert_\infty\\ 
	 	&< 1+ \max_{i=0, \cdots, s}\Vert\ba_i\Vert_p\max_{i= 0,\cdots, s}\max\limits_{\substack{J\subset\{m+1,\cdots , n\} \\ \# J=j}}\vert \langle \be^p_J, \bw\rangle\vert_p\Vert\bw\Vert_\infty.
	 	\end{aligned}$$
	Now the same argument can be applied to the each of the components $\vert \langle \be^p_J, \bw\rangle\vert_p\Vert\bw\Vert_\infty$ where the smallest index is at least $m+1$. After at most $s+1$ steps the process terminates and this concludes the proof. 
	\end{proof}

Note that for $\bw\in\mathcal{S}_{n+1, j}$ with $j > n-s$ we have that $\pi_{\bullet}(\bw)=0$. Since $$\Vert\pi(\bw)\Vert_p\Vert\bw\Vert_\infty\geq 1 ~\forall ~\bw\in\mathcal{S}_{n+1, j},$$ we have that   
$$\Vert R_A \bc(\bw)\Vert_p \Vert \bw\Vert_\infty \geq 1\geq (\Vert\pi(\bw)\Vert_p\Vert\bw\Vert_\infty)^{-u}$$ 
for $u > 0$ and for all arbitrarily large $\Vert\pi(\bw)\Vert_p\Vert\bw\Vert_\infty$. Thus condition (\ref{covolume3}) holds for $j>n-s$. Therefore while computing the exponent $w_{p}(\mathcal{L}),$ according to Corollary \ref{w(L)}, the subgroups of rank greater than $n-s$ are irrelevant. We now define higher Diophantine exponents. 
\begin{definition}\label{def:higher}
	For each $j= 1, \cdots, n-s,$ let $$
	w^{p}_{j}(A):= \sup \left\{v\left| \begin{aligned} &\exists ~\bw\in \mathcal{S}_{n+1,j}\text{ with arbitrary large }\Vert\pi_{\bullet}(\bw)\Vert_p\Vert\bw\Vert_\infty\\& \text{such that }\Vert R_A\bc(\bw)\Vert_p\Vert\bw\Vert_\infty<(\Vert\pi_{\bullet}(\bw)\Vert_p\Vert\bw\Vert_\infty)^{-\frac{v+1-j}{j} }\end{aligned}\right. \right\}.$$
\end{definition}
We now observe 
\begin{corollary}\label{max w_j}
	For an s-dimensional subspace $\mathcal{L}$ parametrized by $A$ as in (\ref{A})
	 $$
	w_{p}(\mathcal{L})= \max (n, w^{p}_j(A)_{j=1,\cdots,n-s}).
	$$
\end{corollary}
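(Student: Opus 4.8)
The plan is to read the identity off from Corollary~\ref{w(L)}, which already expresses $w_{p}(\mathcal{L})$ as $\max\bigl(n,\,\inf\{v\ge n : (\ref{covolume3})\text{ holds for }R=R_{A}\}\bigr)$; thus it will suffice to show that, for $v\ge n$, condition~(\ref{covolume3}) holds exactly when $v\ge\max_{1\le j\le n-s}w^{p}_{j}(A)$. As was noted just before Definition~\ref{def:higher}, for $j>n-s$ condition~(\ref{covolume3}) is automatic, so only the slices $j=1,\dots,n-s$ constrain $v$; and for each such slice the only regime that matters is $\Vert R_{A}\bc(\bw)\Vert_{p}\Vert\bw\Vert_{\infty}\le 1$, since this quantity is being compared against powers of $\Vert\pi(\bw)\Vert_{p}\Vert\bw\Vert_{\infty}$ that tend to $0$.

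The core step is a comparison, for each fixed $j\le n-s$, between $X(\bw):=\Vert\pi(\bw)\Vert_{p}\Vert\bw\Vert_{\infty}$, which appears in~(\ref{covolume3}), and $Y(\bw):=\Vert\pi_{\bullet}(\bw)\Vert_{p}\Vert\bw\Vert_{\infty}$, which appears in Definition~\ref{def:higher}. Since the index set cut out by $\pi_{\bullet}$ is contained in that cut out by $\pi$, one always has $Y(\bw)\le X(\bw)$; and whenever $\Vert R_{A}\bc(\bw)\Vert_{p}\Vert\bw\Vert_{\infty}\le 1$, Lemma~\ref{pidot} gives $\Vert\bw\Vert_{p}\Vert\bw\Vert_{\infty}\ll 1+Y(\bw)$, whence $X(\bw)\le\Vert\bw\Vert_{p}\Vert\bw\Vert_{\infty}\ll 1+Y(\bw)$ because $\pi$ is a coordinate projection. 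Consequently, along any sequence $\bw_{m}$ with $\Vert R_{A}\bc(\bw_{m})\Vert_{p}\Vert\bw_{m}\Vert_{\infty}\to 0$, the sequence $X(\bw_{m})$ is unbounded if and only if $Y(\bw_{m})$ is, and in that case $X(\bw_{m})\asymp Y(\bw_{m})$, so that $X(\bw_{m})^{-u}\asymp Y(\bw_{m})^{-u}$ for each fixed $u>0$.

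Using this dictionary, I would check that for $v\ge n$ and each $j\le n-s$ condition~(\ref{covolume3}) fails at $v$ for the slice $j$ precisely when $v<w^{p}_{j}(A)$. In one direction, expanding the negation of~(\ref{covolume3}) produces $\bw$'s with $X(\bw)$ unbounded and $\Vert R_{A}\bc(\bw)\Vert_{p}\Vert\bw\Vert_{\infty}\le X(\bw)^{-u}$ for some $u>(v+1-j)/j>0$; this forces $\Vert R_{A}\bc(\bw)\Vert_{p}\Vert\bw\Vert_{\infty}\to 0$, so the comparison applies and turns the bound into $\Vert R_{A}\bc(\bw)\Vert_{p}\Vert\bw\Vert_{\infty}<Y(\bw)^{-u'}$ with $(v+1-j)/j<u'<u$ and $Y(\bw)$ unbounded, which is exactly the statement $w^{p}_{j}(A)>v$ in Definition~\ref{def:higher}. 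The reverse implication runs symmetrically: one starts from a defining family for $w^{p}_{j}(A)$ at an exponent strictly larger than $v$, notes that the corresponding exponent is positive since $v\ge n\ge n-s\ge j$, and applies the comparison in the form $Y(\bw)\le X(\bw)\ll 1+Y(\bw)$, using that $w^{p}_{j}(A)$ is a supremum to absorb a small loss in the exponent. Combining over $j$, condition~(\ref{covolume3}) holds at $v\ge n$ iff $v\ge w^{p}_{j}(A)$ for every $j=1,\dots,n-s$, i.e.\ iff $v\ge\max_{1\le j\le n-s}w^{p}_{j}(A)$; feeding this into Corollary~\ref{w(L)} yields $w_{p}(\mathcal{L})=\max\bigl(n,\,w^{p}_{j}(A)_{j=1,\dots,n-s}\bigr)$, which is the assertion.

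The main obstacle I anticipate is precisely the $\pi$-versus-$\pi_{\bullet}$ comparison of the second paragraph. One must verify that the additive constant $1$ in Lemma~\ref{pidot} is harmless --- which it is, because the comparison is only invoked along sequences where $X(\bw)$ or $Y(\bw)$ diverges --- and one must keep scrupulous track of which direction of $Y(\bw)\le X(\bw)\ll 1+Y(\bw)$ is in play in each of the two implications, as well as of the quantifier nesting (universal in $u$, together with an ``arbitrarily large $X(\bw)$'' clause) so that the slack available in the choice of $u$ is correctly matched against the slack built into the supremum defining $w^{p}_{j}(A)$.
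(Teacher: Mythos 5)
Your proposal is correct and follows essentially the same route as the paper: the paper's own (two-line) proof likewise deduces that condition (\ref{covolume3}) holds at $v$ if and only if $\max_{j\le n-s} w^{p}_j(A)\le v$ from Lemma \ref{pidot}, and then concludes via Corollary \ref{w(L)}. Your write-up merely makes explicit the comparison $\Vert\pi_{\bullet}(\bw)\Vert_p\Vert\bw\Vert_\infty\le\Vert\pi(\bw)\Vert_p\Vert\bw\Vert_\infty\ll 1+\Vert\pi_{\bullet}(\bw)\Vert_p\Vert\bw\Vert_\infty$ and the bookkeeping of exponents that the paper leaves implicit.
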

\begin{proof}
By Lemma \ref{pidot} we can conclude that for some $v$ condition (\ref{covolume3}) holds if and only if $\max_{j=0,\cdots,n-s} w^{p}_j(A)\leq v $. And thus by (\ref{w(L)}), we can conclude this corollary.
\end{proof}
\begin{lemma}\label{w=w_1}
	$w^{p}_1(A)= w_{p}(A).$
\end{lemma}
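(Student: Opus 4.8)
The plan is to prove Lemma \ref{w=w_1} by specialising Definition \ref{def:higher} to $j=1$ and recognising the resulting condition as, essentially verbatim, the defining condition of $w_p(A)$ in (\ref{def:matrixp}). So the proof is a translation between two pieces of notation, and I expect the only content to be the bookkeeping.

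First I would record the identifications valid when $j=1$. Split a vector $\bw\in\mathcal D^{n+1}=\Z[1/p]^{s+1}\times\Z[1/p]^{n-s}$ as $\bw=(\bw',\bw'')$ with $\bw'=(w_0,\dots,w_s)$ and $\bw''=(w_{s+1},\dots,w_n)$. Since the index set $\{J\subset\{1,\dots,n\}:\#J=j-1\}$ reduces to the single set $J=\emptyset$, one has $\bc(\bw)=\bw$; and the formula for $\|R_A\bc(\bw)\|_p$ displayed just before Lemma \ref{pidot} becomes, using $\langle(\be_i^p+\ba_i)\wedge\be_\emptyset^p,\bw\rangle=w_i+\sum_{k=s+1}^n a_{ik}w_k$,
$$\|R_A\bc(\bw)\|_p=\max_{0\le i\le s}\big|(\bw'+A\bw'')_i\big|_p=\|\bw'+A\bw''\|_p .$$
Similarly, for $j=1$ the map $\pi_\bullet$ is simply the coordinate projection of $\Q_p^{n+1}$ onto $\langle\be^p_{s+1},\dots,\be^p_n\rangle$, so $\pi_\bullet(\bw)=\bw''$ and $\|\pi_\bullet(\bw)\|_p=\|\bw''\|_p$, while $\|\bw\|_\infty$ is the sup norm of the $(n+1)$-tuple $(\bw',\bw'')$.

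Next I would substitute these into Definition \ref{def:higher}. The exponent there, $\tfrac{v+1-j}{j}$, equals $v$ when $j=1$, so $w_1^p(A)$ is the supremum of those $v$ for which there exist $\bw=(\bw',\bw'')\in\Z[1/p]^{s+1}\times\Z[1/p]^{n-s}$ with $\|\bw''\|_p\|\bw\|_\infty$ arbitrarily large and
$$\|\bw'+A\bw''\|_p\,\|\bw\|_\infty<(\|\bw''\|_p\|\bw\|_\infty)^{-v}.$$
Setting $\bq_0:=\bw'$, $\bq:=\bw''$ (so $\tilde\bq=(\bq_0,\bq)$) and multiplying the inequality of (\ref{def:matrixp}) through by $\|\tilde\bq\|_\infty$, this is precisely the condition defining $w_p(A)$, the only cosmetic difference being that (\ref{def:matrixp}) uses a non-strict inequality. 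Since both quantities are of the form $\sup\{v: \text{the condition holds for every }v'<v\}$, replacing $<$ by $\le$ changes neither supremum, and I would conclude $w_1^p(A)=w_p(A)$. (If one prefers to argue by two inclusions: $w_1^p(A)\le w_p(A)$ follows by using a witness $\bw$ of Definition \ref{def:higher} directly as the vector $\tilde\bq$ in (\ref{def:matrixp}), and $w_p(A)\le w_1^p(A)$ by the reverse substitution.)

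The only thing that genuinely has to be checked is that the two quantifier ranges correspond, i.e. that letting $\bw$ run over $\mathcal{S}_{n+1,1}$ with $\|\pi_\bullet(\bw)\|_p\|\bw\|_\infty$ unbounded is the same as letting $\tilde\bq$ run over $\Z[1/p]^{n+1}=\Z[1/p]^{s+1}\times\Z[1/p]^{n-s}$ with $\|\bq\|_p\|\tilde\bq\|_\infty$ unbounded. This is immediate from the identifications above: every element of $\mathcal D^{n+1}$ is a decomposable $1$-vector, and under the correspondence the two "size" functions $\|\pi_\bullet(\bw)\|_p\|\bw\|_\infty$ and $\|\bq\|_p\|\tilde\bq\|_\infty$ coincide term by term. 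I do not anticipate any real obstacle here; the lemma simply records that the $j=1$ instance of the "higher" exponent is the matrix exponent $w_p(A)$ introduced in \S\ref{sec:expo}.
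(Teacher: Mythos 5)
Your proposal is correct and follows essentially the same route as the paper: identify $\bc(\bw)=\bw$, $\pi_\bullet(\bw)=\bq$, $R_A\bc(\bw)=\bq_0+A\bq$ for $j=1$, note that $\tfrac{v+1-j}{j}=v$, and observe that the defining conditions of $w_1^p(A)$ and $w_p(A)$ then coincide. The paper's proof is just a terser version of this bookkeeping, so there is nothing to add.
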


\begin{proof}
	Take $\bw=\begin{bmatrix}
	q_0,\\ q_1\\ \vdots\\q_s\\q_{s+1}\\ \vdots\\q_n
	\end{bmatrix}=\tilde{\bq}\in\mathcal{D}^{n+1}\setminus\{0\}= \mathcal{S}_{n+1,1}$ and denote $\bq_0=\begin{bmatrix}
	q_0\\\vdots\\q_s
	\end{bmatrix}$ and $\bq= \begin{bmatrix}
	q_{s+1}\\\vdots\\q_{n}
	\end{bmatrix}$. Then $$R_A(\bc(\bw))=\bq_0+ A\bq , \bc(\bw)=\bw \text{ and } \pi_{\bullet}(\bw)=\bq.$$
	Hence the definition of both exponents coincide.
\end{proof}
\begin{theorem}\label{hyperplane}
	Suppose $\mathcal{L}$ is an $s= n-1$ dimensional subspace parametrized by $R_A$ as in (\ref{A}). Then $$w_{p}(\mathcal{L})=\max(n, w_{p}(A)).$$
\end{theorem}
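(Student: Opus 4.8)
The plan is to obtain Theorem~\ref{hyperplane} as an immediate specialization of Corollary~\ref{max w_j} and Lemma~\ref{w=w_1}, using the fact that a hyperplane is precisely the case in which only rank-one tensors contribute to the exponent.

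First I would record that an $(n-1)$-dimensional affine subspace has $n-s=1$. Recall from the discussion preceding Definition~\ref{def:higher} that condition~(\ref{covolume3}) is automatic for indices $j>n-s$ (there $\pi_{\bullet}(\bw)=0$ for every $\bw\in\mathcal{S}_{n+1,j}$), so that in Corollary~\ref{max w_j} only the exponents $w_j^p(A)$ with $1\leq j\leq n-s$ play a role. When $s=n-1$ this index set is the singleton $\{1\}$, and Corollary~\ref{max w_j} therefore reads
\begin{equation}
w_p(\mathcal{L})=\max\bigl(n,\,w_1^p(A)\bigr).
\end{equation}

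Next I would invoke Lemma~\ref{w=w_1}, which identifies $w_1^p(A)$ with the matrix exponent $w_p(A)$ of~(\ref{def:matrixp}): writing $\bw=\tilde{\bq}\in\mathcal{S}_{n+1,1}$ and splitting $\tilde{\bq}=(\bq_0,\bq)$ according to the block decomposition $R_A=(I_{s+1}\ A)$, one has $R_A\bc(\bw)=\bq_0+A\bq$, $\bc(\bw)=\bw$, and $\pi_{\bullet}(\bw)=\bq$, so the suprema defining $w_1^p(A)$ and $w_p(A)$ coincide term by term. Substituting this into the display yields $w_p(\mathcal{L})=\max(n,w_p(A))$, which is the assertion.

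I do not anticipate a genuine obstacle here: the substance has already been established in Corollaries~\ref{w(L)} and~\ref{max w_j} and in Lemma~\ref{w=w_1}. The only thing requiring a small amount of care is the bookkeeping that the index set $\{1,\dots,n-s\}$ in Corollary~\ref{max w_j} degenerates to $\{1\}$ when $s=n-1$, so that no higher exponents $w_j^p(A)$ with $j\geq 2$ enter the formula; everything else is a transcription of the two cited results.
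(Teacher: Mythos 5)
Your argument is correct and is essentially the paper's own proof: specializing Corollary~\ref{max w_j} to $s=n-1$ (so only $j=1$ occurs) and then applying Lemma~\ref{w=w_1} to identify $w_1^p(A)$ with $w_p(A)$. The bookkeeping about the index set degenerating to $\{1\}$ is exactly the point, and you have it right.
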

\begin{proof}
	The Theorem follows directly from Lemma \ref{w=w_1} and Corollary \ref{max w_j}.
\end{proof}

\begin{lemma}
	Let $\mathcal{L}$ be parametrized  by $R_A$ as in (\ref{A}). Then for any $v< w_{p}(A)$ there exists arbitrary large $\Vert\bq\Vert_p\Vert\tilde{\bq}\Vert_\infty $ such that \begin{equation}
	\vert \bq.\by+q_0\vert_p\Vert\tilde{\bq}\Vert_\infty\leq (\Vert \bq\Vert_p \Vert\tilde{\bq}\Vert_\infty)^{-v} \text{ for all } \by\in\mathcal{L} \text{ and } \tilde{\bq}=(q_0,\bq)\in\mathcal{D}^{n+1}.\end{equation}
\end{lemma}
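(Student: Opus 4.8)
The plan is to produce the required $\Z[1/p]$-approximations to the affine forms $\by\mapsto q_{0}+\bq\cdot\by$ on $\mathcal{L}$ by transporting, through the parametrization (\ref{R-type}), a sequence of good $\Z[1/p]$-approximations for the matrix $A$, whose existence is guaranteed by the hypothesis $v<w_{p}(A)$ and the definition (\ref{def:matrixp}) of the matrix exponent, applied to the $(s+1)\times(n-s)$ matrix $A$.

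First I would fix $v'$ with $v<v'<w_{p}(A)$ and use (\ref{def:matrixp}) to choose vectors $\tilde{\bq}^{A}=(\bq_{0}^{A},\bq^{A})\in\mathcal{D}^{s+1}\times\mathcal{D}^{n-s}$, with $\Vert\bq^{A}\Vert_{p}\Vert\tilde{\bq}^{A}\Vert_{\infty}$ arbitrarily large, satisfying
$$\Vert A\bq^{A}+\bq_{0}^{A}\Vert_{p}\le\bigl(\Vert\bq^{A}\Vert_{p}\Vert\tilde{\bq}^{A}\Vert_{\infty}\bigr)^{-v'}\Vert\tilde{\bq}^{A}\Vert_{\infty}^{-1};$$
note $\bq^{A}\neq0$. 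I then let $\tilde{\bq}=(q_{0},\bq)\in\mathcal{D}^{n+1}$ be the vector whose $n+1$ coordinates are exactly those of $(\bq_{0}^{A};\bq^{A})$, listed in order, so that $\Vert\tilde{\bq}\Vert_{\infty}=\Vert\tilde{\bq}^{A}\Vert_{\infty}$ is automatic and $\Vert\bq\Vert_{p}\ge\Vert\bq^{A}\Vert_{p}$. The reason this $\tilde{\bq}$ is the right choice is the identity, valid for $\by=\mathbf{h}(\bx)\in\mathcal{L}$ because $\tilde{\mathbf{h}}(\bx)=\tilde{\bx}R_{A}$ with $R_{A}=(I_{s+1}\ \ A)$,
$$q_{0}+\bq\cdot\by=\tilde{\bq}\cdot\tilde{\by}=\tilde{\bq}\cdot(\tilde{\bx}R_{A})=\tilde{\bx}\cdot(R_{A}\tilde{\bq})=\tilde{\bx}\cdot(A\bq^{A}+\bq_{0}^{A}),$$
which by the ultrametric inequality gives $|q_{0}+\bq\cdot\by|_{p}\le\Vert\tilde{\bx}\Vert_{p}\,\Vert A\bq^{A}+\bq_{0}^{A}\Vert_{p}$.

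Next I would do the norm bookkeeping. The elementary content bound $\Vert\bq^{A}\Vert_{p}\Vert\tilde{\bq}^{A}\Vert_{\infty}\ge\Vert\bq^{A}\Vert_{p}\Vert\bq^{A}\Vert_{\infty}\ge1$ (for the nonzero vector $\bq^{A}\in\mathcal{D}^{n-s}$), combined with the displayed approximation inequality, forces $\Vert A\bq^{A}+\bq_{0}^{A}\Vert_{p}\le\Vert\bq^{A}\Vert_{p}$; writing $\bq_{0}^{A}=(A\bq^{A}+\bq_{0}^{A})-A\bq^{A}$ and applying the ultrametric inequality again gives $\Vert\bq_{0}^{A}\Vert_{p}\le C_{A}\Vert\bq^{A}\Vert_{p}$ with $C_{A}:=\max(1,\max_{i,j}|a_{ij}|_{p})$ depending only on $A$, hence $\Vert\bq\Vert_{p}\le C_{A}\Vert\bq^{A}\Vert_{p}$. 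Therefore $\Vert\bq^{A}\Vert_{p}\Vert\tilde{\bq}^{A}\Vert_{\infty}\le\Vert\bq\Vert_{p}\Vert\tilde{\bq}\Vert_{\infty}\le C_{A}\,\Vert\bq^{A}\Vert_{p}\Vert\tilde{\bq}^{A}\Vert_{\infty}$; in particular $\Vert\bq\Vert_{p}\Vert\tilde{\bq}\Vert_{\infty}$ runs through arbitrarily large values along the sequence.

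Finally I would combine everything:
$$|q_{0}+\bq\cdot\by|_{p}\Vert\tilde{\bq}\Vert_{\infty}\le\Vert\tilde{\bx}\Vert_{p}\,\bigl(\Vert\bq^{A}\Vert_{p}\Vert\tilde{\bq}^{A}\Vert_{\infty}\bigr)^{-v'}\le\Vert\tilde{\bx}\Vert_{p}\,C_{A}^{\,v'}\,\bigl(\Vert\bq\Vert_{p}\Vert\tilde{\bq}\Vert_{\infty}\bigr)^{-v'}.$$
For each fixed $\by\in\mathcal{L}$ the factor $\Vert\tilde{\bx}\Vert_{p}$ is a constant, so since $v'>v$ and $\Vert\bq\Vert_{p}\Vert\tilde{\bq}\Vert_{\infty}\to\infty$, for all but finitely many of the approximants the right-hand side is at most $(\Vert\bq\Vert_{p}\Vert\tilde{\bq}\Vert_{\infty})^{-v}$, which is the asserted inequality; on the compact part $\mathbf{h}(\Z_{p}^{s})$ of $\mathcal{L}$, where $\Vert\tilde{\bx}\Vert_{p}\equiv1$, the bound is uniform. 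I expect the main subtlety to be the norm comparison $\Vert\bq\Vert_{p}\le C_{A}\Vert\bq^{A}\Vert_{p}$ of the third paragraph — that the auxiliary block $\bq_{0}^{A}$ is $p$-adically no larger than $\bq^{A}$, which is precisely where the positivity of $v'$ (i.e. the quality of the matrix approximation) is used — together with keeping correct track of the factor $\Vert\tilde{\bx}\Vert_{p}$ when passing from the exponent $v'$ down to $v$.
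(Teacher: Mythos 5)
Your proposal is correct and follows essentially the same route as the paper's proof: choose an exponent $v'$ strictly between $v$ and $w_{p}(A)$, pull the matrix approximants through the identity $q_{0}+\bq\cdot\by=\tilde{\bx}\,(A\bq^{A}+\bq_{0}^{A})$ coming from the parametrization $\tilde{\mathbf{h}}(\bx)=\tilde{\bx}R_{A}$, and absorb both the factor $\Vert\tilde{\bx}\Vert_{p}$ and the block-norm comparison $\Vert\bq\Vert_{p}\le C_{A}\Vert\bq^{A}\Vert_{p}$ into the gap $v'-v$. The paper does exactly this, merely splitting the gap as $v+2\varepsilon\to v+\varepsilon\to v$ in two steps, so no further comment is needed.
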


\begin{proof}
	Note that having arbitrarily large $\Vert\bq\Vert_p\Vert\tilde{\bq}\Vert_\infty$ is not the same as infinitely many $\tilde{\bq}\in\mathcal{D}^{n+1}$.
	Since $v< w_{p}(A)$ there exists $\varepsilon>0$ such that $$v<v+2\varepsilon< w_{p}(A).$$  Hence there exists arbitrary large $\Vert\bq\Vert_p\Vert\tilde{\bq}\Vert_\infty $ such that $$\Vert\bq_0+ A\bq\Vert_p\Vert \tilde\bq\Vert_\infty\leq (\Vert\bq\Vert_p\Vert\tilde{\bq}\Vert_\infty)^{-v-2\varepsilon}$$ and $\tilde{\bq}=(\bq_0,\bq)=\left(\begin{bmatrix}
	q_0\\ \vdots\\q_s
	\end{bmatrix} ,\begin{bmatrix}
	q_{s+1}\\ \vdots \\ q_n
	\end{bmatrix}\right)\in\mathcal{D}^{s+1}\times\mathcal{D}^{n-s}$.\\ For any $\by=(\bx,\tilde\bx A)\in\mathcal{L}$ where $\tilde \bx=(1,\bx)\in\Q_p^{s+1}$ we can write $$\begin{aligned}\left\vert q_0+\by.\begin{bmatrix}
	q_1\\\vdots\\ q_n
	\end{bmatrix}\right\vert_p\Vert\tilde{\bq}\Vert_\infty
&=\left\vert q_0+(\bx, \tilde{\bx}A).\begin{bmatrix}
q_1\\\vdots\\ q_n
\end{bmatrix}\right\vert_p\Vert\tilde{\bq}\Vert_\infty\\
&=\left\vert q_0+\bx.\begin{bmatrix}
q_1\\ \vdots \\q_s
\end{bmatrix}+\tilde{\bx}A.\begin{bmatrix}
q_{s+1}\\ \vdots\\ q_n
\end{bmatrix}\right\vert_p\Vert\tilde{\bq}\Vert_\infty\\& =\vert\tilde{\bx}(A\bq+ \bq_0)\vert_p\Vert\tilde\bq\Vert_\infty\\&\leq \Vert\tilde{\bx}\Vert_p\Vert A\bq+ \bq_0\Vert_p\Vert\tilde\bq\Vert_\infty\\&
\leq \Vert\tilde{\bx}\Vert_p (\Vert \bq\Vert_p \Vert\tilde{\bq}\Vert_\infty)^{-v-2\varepsilon} 	\end{aligned}
	$$ for arbitrary large $\Vert\bq\Vert_p\Vert\tilde{\bq}\Vert_\infty $. This implies that we also have $$\left\vert q_0+\by.\bq_{1}\right
	\vert_p\Vert\tilde{\bq}\Vert_\infty
	\leq (\Vert \bq\Vert_p \Vert\tilde{\bq}\Vert_\infty)^{-v-\varepsilon}.
$$  for arbitrary large $\Vert\bq\Vert_p\Vert\tilde{\bq}\Vert_\infty $ where $\bq_1= \begin{bmatrix}
q_1\\ \vdots\\ q_n\end{bmatrix}$.\\
We also have $$\Vert\bq_0\Vert_p\Vert\tilde{\bq}\Vert_\infty\leq\Vert\bq_0\Vert_p\Vert\tilde{\bq}\Vert_\infty\leq (1+ \Vert A\Vert_p)\Vert\bq\Vert_p\Vert\tilde{\bq}\Vert_\infty$$ which implies that $$\Vert\bq_1\Vert_p\leq (1+\Vert A\Vert_p) \Vert\bq\Vert_p.$$
Then  we have that 
$$\begin{aligned}
&\vert q_0+\by.\bq_1\vert_p\Vert\tilde{\bq}\Vert_\infty\\&\leq \frac{1}{(\Vert \bq_1\Vert_p \Vert\tilde{\bq}\Vert_\infty)^{v+\varepsilon}}.\frac{(\Vert \bq_1\Vert_p \Vert\tilde{\bq}\Vert_\infty)^{v+\varepsilon}}{(\Vert \bq\Vert_p \Vert\tilde{\bq}\Vert_\infty)^{v+\varepsilon}}\\
& \leq \frac{1}{(\Vert \bq_1\Vert_p \Vert\tilde{\bq}\Vert_\infty)^{v}}
\end{aligned}
$$ for arbitrary large $\Vert \bq\Vert_p\Vert\tilde{\bq}\Vert_\infty$, hence  for arbitrary large 
$\Vert \bq_1\Vert_p\Vert\tilde{\bq}\Vert_\infty$,
thereby concluding the proof. 
\end{proof}

An immediate consequence of above lemma is the following corollary.
\begin{corollary}
		Let $\mathcal{L}$ is parametrized  by $R_A$ as in (\ref{A}). Then $$w_{p}(A)\leq w_{p}(\mathcal{L}).$$
\end{corollary}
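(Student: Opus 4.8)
The plan is to read the corollary off the lemma just proved. That lemma produces, for each $v<w_{p}(A)$, a \emph{single} family of vectors $\tilde{\bq}=(q_0,\bq)\in\mathcal{D}^{n+1}$ with $\Vert\bq\Vert_p\Vert\tilde{\bq}\Vert_\infty$ unbounded which approximates \emph{every} point of $\mathcal{L}$ at exponent $v$ simultaneously; so the task reduces to converting that statement into a lower bound for $w_{p}$ at each point of $\mathcal{L}$, and then into a lower bound for $w_{p}(\mathcal{L})$.

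First I would fix $v<w_{p}(A)$, pick an intermediate $v<v'<w_{p}(A)$, and apply the lemma with $v'$: this gives arbitrarily large $\Vert\bq\Vert_p\Vert\tilde{\bq}\Vert_\infty$ with
$$
|\bq\cdot\by+q_0|_p\,\Vert\tilde{\bq}\Vert_\infty\le\bigl(\Vert\bq\Vert_p\Vert\tilde{\bq}\Vert_\infty\bigr)^{-v'}\qquad\text{for every }\by\in\mathcal{L}.
$$
Discarding those $\tilde{\bq}$ with $\Vert\bq\Vert_p\Vert\tilde{\bq}\Vert_\infty\le 1$ (the remaining family is still unbounded), the right-hand side becomes strictly smaller than $(\Vert\bq\Vert_p\Vert\tilde{\bq}\Vert_\infty)^{-v}$, and dividing by $\Vert\tilde{\bq}\Vert_\infty$ places $(q_0,\bq)$ in the defining inequality (\ref{Z_S}) for exponent $v$, valid for all $\by\in\mathcal{L}$. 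Hence $\by\in\mathcal{W}_v^{p}$, so $w_{p}(\by)\ge v$ for every $\by\in\mathcal{L}$; letting $v\uparrow w_{p}(A)$ yields $w_{p}(\by)\ge w_{p}(A)$ for all $\by\in\mathcal{L}$.

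To finish I would either invoke the identity $w_{p}(\mathcal{L})=\inf\{w_{p}(\by):\by\in\mathcal{L}\}$ already established (Theorem \ref{main:sp2}), or argue directly from $w_{p}(\mathcal{L})=w_{p}(\mathbf{h}_*\nu)$ ($\mathbf{h}$ the affine parametrization of $\mathcal{L}$, $\nu$ Haar measure on $\Q_p^s$): for every $v<w_{p}(A)$ the set $\{y:w_{p}(y)>v\}$ contains $\mathcal{L}$ and hence has full $\mathbf{h}_*\nu$-measure, so the supremum defining $w_{p}(\mathbf{h}_*\nu)$ is at least $w_{p}(A)$. There is no genuine obstacle here — all the substance was in the preceding lemma, where the parametrization $\by=(\bx,\tilde{\bx}A)$ rewrites $|\bq\cdot\by+q_0|_p$ as $|\tilde{\bx}(A\bq+\bq_0)|_p$ and the factor $\Vert\tilde{\bx}\Vert_p$ is absorbed by a small decrease of the exponent; the only delicate point in the corollary itself is the strict-versus-nonstrict inequality and the ``unbounded, not infinitely many'' bookkeeping noted in that proof.
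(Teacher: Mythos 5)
Your proposal is correct and is essentially the argument the paper intends: the paper offers no separate proof (it calls the corollary an ``immediate consequence'' of the preceding lemma), and your write-up simply makes that implication explicit — pass from $v<w_p(A)$ to an intermediate $v'$ to get the strict inequality of (\ref{Z_S}) for every $\by\in\mathcal{L}$, conclude $w_p(\by)\ge w_p(A)$ on all of $\mathcal{L}$, and then use either the established equality $w_p(\mathcal{L})=\inf\{w_p(\by):\by\in\mathcal{L}\}$ or, more elementarily, the fact that $\{\by: w_p(\by)>v\}\supset\mathcal{L}$ has full $\mathbf{h}_*\nu$-measure. Both finishing routes are valid, and your care with the strict-versus-nonstrict inequality and the unboundedness of $\Vert\bq\Vert_p\Vert\tilde{\bq}\Vert_\infty$ is exactly the bookkeeping needed.
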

Another observation is when in the parametrizing matrix $A$ with more than one column and  all columns are rational multiple of one column then $w_{p}(A)=\infty$ and so is $w_{p}(\mathcal{L})$. Combining this observation with Theorem \ref{hyperplane} we can conclude the following:
\begin{theorem}\label{columns}
	Let $\mathcal{L}$ be an affine subspace parametrized  by $R_A$ as in (\ref{A}). If all the columns are rational multiples of one column then $$
	w_{p}(\mathcal{L})= \max(n, w_{p}(A)) \text{ and } w(\mathcal{L})=\max(n+1, w(A)).
	$$
\end{theorem}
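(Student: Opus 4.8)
The plan is to split into two cases according to the number $n-s$ of columns of $A$, i.e.\ according to whether $\mathcal{L}$ is an affine hyperplane or has codimension at least two. If $A$ has exactly one column, then $s=n-1$, the hypothesis that all columns be rational multiples of one column is automatic, and Theorem \ref{hyperplane} already asserts $w_{p}(\mathcal{L})=\max(n,w_{p}(A))$; so in this case there is nothing left to prove, and the whole content of the statement lies in the case $n-s\ge 2$.

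So assume $n-s\ge 2$ and that every column of $A$ is a rational multiple of a fixed column of $A$. Then any two columns of $A$ are linearly dependent over $\Q$, and clearing denominators in a nontrivial rational relation between two of them produces a nonzero vector $\bq'\in\Z^{n-s}$ with $A\bq'=0$. For each positive integer $\ell$ with $p\nmid\ell$, put $\bq:=\ell\,\bq'\in\Z[1/p]^{n-s}$, $\bq_{0}:=0$, and $\tilde{\bq}:=(\bq_{0},\bq)$; then $A\bq+\bq_{0}=0$, so $\Vert A\bq+\bq_{0}\Vert_{p}=0\le(\Vert\bq\Vert_{p}\Vert\tilde{\bq}\Vert_{\infty})^{-v}\Vert\tilde{\bq}\Vert_{\infty}^{-1}$ for every $v$, while $\Vert\bq\Vert_{p}\Vert\tilde{\bq}\Vert_{\infty}=\ell\,\Vert\bq'\Vert_{p}\Vert\bq'\Vert_{\infty}\to\infty$ as $\ell\to\infty$ through such integers. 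Hence $w_{p}(A)=\infty$, and combining this with the inequality $w_{p}(A)\le w_{p}(\mathcal{L})$ (the corollary immediately preceding the theorem) gives $w_{p}(\mathcal{L})=\infty=\max(n,w_{p}(A))$. Thus the first displayed identity holds in every case.

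It then remains to transfer the statement from $w_{p}$ to $w$. By Proposition \ref{same} and the analogous matrix relation recorded just after \eqref{def:matrixz}, the exponents $w_{p}(\by)$ and $w(\by)$ differ by $1$ for every $\by$, and so do $w_{p}(A)$ and $w(A)$; and since $w(\mathcal{L})=\inf\{w(\by)\mid\by\in\mathcal{L}\}$ and $w_{p}(\mathcal{L})=\inf\{w_{p}(\by)\mid\by\in\mathcal{L}\}$, the same relation holds between $w(\mathcal{L})$ and $w_{p}(\mathcal{L})$. Applying this shift throughout the equality $w_{p}(\mathcal{L})=\max(n,w_{p}(A))$ --- which also turns $\max(n,\cdot)$ into $\max(n+1,\cdot)$ --- yields $w(\mathcal{L})=\max(n+1,w(A))$.

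I do not expect a serious conceptual obstacle, since the hard analytic input is already contained in Theorem \ref{hyperplane} and the quantitative nondivergence estimates behind it. The points that need a little care are, in the multi-column case, extracting an honest nonzero \emph{integer} relation among the columns out of the rationality hypothesis, and choosing the multiplier $\ell$ coprime to $p$ so that the product $\Vert\bq\Vert_{p}\Vert\tilde{\bq}\Vert_{\infty}$ is genuinely unbounded (scaling by $p$ itself would not work, as the $p$-adic norm would then shrink in compensation); and, in the last step, keeping track of which of the two exponents carries the shift so that $\max(n,\cdot)$ becomes exactly $\max(n+1,\cdot)$.
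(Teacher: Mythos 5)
Your argument is correct and follows essentially the same route as the paper: the one-column case is exactly Theorem \ref{hyperplane}, the multi-column case rests on the observation that a rational relation among the columns forces $w_{p}(A)=\infty$ (your choice of multipliers $\ell$ coprime to $p$ makes this rigorous) combined with the corollary $w_{p}(A)\le w_{p}(\mathcal{L})$, and the $w$-statement follows by the shift between the two exponents. One remark on that shift: obtaining $\max(n+1,w(A))$ requires $w=w_{p}+1$ (equivalently $w_{p}=w-1$), which is what the proof of Proposition \ref{same} actually establishes and what Dirichlet's bounds $w(\by)\ge n+1$, $w_{p}(\by)\ge n$ corroborate, even though the statement of Proposition \ref{same} is printed with the opposite sign; you applied the shift in the correct direction, so your deduction stands.
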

The calculation of symmetries of higher order exponents is exactly the same as in case of $\R$ as discussed in \cite{Kleinbock-exponent}. So we will just state them in the $p$-adic setup.
\begin{lemma}For any $A\in M_{s+1,n-s}$ and all $\bw\in \mathcal S_{n+1,{j}}$, 
$2\le j \le n-s$, one has 
$$
\max_{i = 0,\dots,s} \max\limits_{\substack{J
\subset\{0,\dots,n\}\\ \# J = j-1}}\big|\big\langle (\be_i^p + \ba_i)\wedge\be_{ J}^p,\bw\big\rangle\big|_p \ll \|R_{ A}
\bc(\bw)\|_p
$$
and
$$
\|R_{A}
\bc(\bw)\|_p \ll \max_{i = 0,\dots,s} \max\limits_{\substack{J
\subset\{i+1,\dots,n\}\\ \# J = j-1}}
\big|\big\langle (\be_i^p + \ba_i)\wedge\be_{ J}^p,\bw\big\rangle\big|_p.
$$
\end{lemma}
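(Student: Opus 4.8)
I would follow the proof of the analogous statement over $\R$ in \cite{Kleinbock-exponent} essentially verbatim; the only field-specific ingredients are the ultrametric inequality --- which only makes the estimates cleaner --- and the bilinear identities for Pl\"ucker coordinates recorded in the lines above, so no genuinely new idea is needed. The starting point is the identity
$$\|R_A\bc(\bw)\|_p=\max_{0\le i\le s}\ \max_{\substack{J\subset\{1,\dots,n\}\\ \#J=j-1}}\big|\big\langle(\be_i^p+\ba_i)\wedge\be_J^p,\bw\big\rangle\big|_p$$
established just above. Granting it, the first inequality reduces to bounding the extra terms with $0\in J$ (those with $0\notin J$ already occur on the right), and the second reduces to showing that the terms with $\min J\le i$ are controlled by the echelon terms, those with $J\subset\{i+1,\dots,n\}$; throughout, $A$-dependent quantities such as $\max_i\|\ba_i\|_p$ are constants absorbed into $\ll$.

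For the first inequality, write $J=\{0\}\cup J'$. Since $\be_0^p\wedge\be_0^p=0$ one has $(\be_i^p+\ba_i)\wedge\be_J^p=\pm\,\be_0^p\wedge\gamma$ with $\gamma\in\bigwedge^{j-1}V_0$, namely $\gamma=(\be_i^p+\ba_i)\wedge\be_{J'}^p$ for $i\ge1$ and $\gamma=\ba_i\wedge\be_{J'}^p$ for $i=0$. By the contraction identity $\langle\be_0^p\wedge\gamma,\bw\rangle=\pm\langle\gamma,\bc(\bw)_0\rangle$, and since the zeroth row of $R_A$ is $\be_0^p+\ba_0$, we may substitute $\bc(\bw)_0=(R_A\bc(\bw))_0-\sum_{k>s}a_{0k}\bc(\bw)_k$. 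Applying $\langle\gamma,\bc(\bw)_k\rangle=\pm\langle\gamma\wedge\be_k^p,\bw\rangle$ and observing that $\gamma\wedge\be_k^p$ is, up to sign, a wedge $(\be_i^p+\ba_i)\wedge\be_{J'\cup\{k\}}^p$ of the kind appearing in the displayed identity, the ultrametric inequality yields a bound by $\ll\|R_A\bc(\bw)\|_p$.

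For the second inequality I would peel off the smallest index of $J$ repeatedly. If $m:=\min J\le i$, the substitution $\be_m^p=(\be_m^p+\ba_m)-\ba_m$ rewrites $(\be_i^p+\ba_i)\wedge\be_J^p$, up to sign, as the difference of $(\be_m^p+\ba_m)\wedge(\be_i^p+\ba_i)\wedge\be_{J\setminus\{m\}}^p$ and $(\be_i^p+\ba_i)\wedge\ba_m\wedge\be_{J\setminus\{m\}}^p$. The first term is a linear combination of echelon terms of row index $m$, because $m=\min J$ and $i>m$ force all its index sets into $\{m+1,\dots,n\}$; the second term, $\ba_m$ being supported on $\{s+1,\dots,n\}$, is a linear combination --- with entries of $A$ as coefficients --- of terms $(\be_i^p+\ba_i)\wedge\be_{J''}^p$ with $\min J''>m$. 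An induction on $\min J$, with base case $\min J>i$ (already an echelon term), therefore reduces $\|R_A\bc(\bw)\|_p$ to the right-hand side.

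The only real work is bookkeeping: the signs, the degenerate cases (e.g.\ $i\in J$ or $k\in J'$) where a wedge vanishes and must be treated separately, and checking that each reduction step strictly increases $\min J$, so that the induction terminates after finitely many steps. None of this depends on the place being $p$; this is what is meant by the calculation being ``exactly the same as in the case of $\R$'' from \cite{Kleinbock-exponent}.
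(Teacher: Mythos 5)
Your proposal is correct and coincides with the paper's approach: the paper itself omits the proof, stating that these lemmas follow by a verbatim repetition of Kleinbock's real-case arguments in \cite{Kleinbock-exponent}, and your sketch is precisely that adaptation (contracting the $0\in J$ terms against $\bc(\bw)_0$ and using the shape of the zeroth row of $R_A$, then peeling off $\min J$ via $\be_m^p=(\be_m^p+\ba_m)-\ba_m$ and inducting), with the ultrametric inequality replacing the triangle inequality. The sign/convention bookkeeping you flag is real but harmless --- it is the same looseness already present in the paper's displayed identity for $\|R_A\bc(\bw)\|_p$ --- and does not affect the estimates.
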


The Lemma enables us to conclude that $w_j^{p}(A)$ is symmetric under any row operation. 
The next lemma allows to consider other row operations namely the multiplication by nonzero rationals and adding one row to another and transposition of rows.
 \begin{lemma}
 Let $A' = BA$ for some $B\in \GL_{s+1}(\Q)$; in other words, $A'$ can 
be obtained from $A$  by a sequence of elementary row operations with rational coefficients.
 Then $w^{p}_j(A') = w^{p}_j(A)$ for all $j$. 
\end{lemma}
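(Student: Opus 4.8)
The plan is to realise a row operation on $A$ as a single bounded linear change of coordinates on $\Q^{n+1}$, and to use the preceding lemma to make the quantity $\Vert R_A\bc(\bw)\Vert_p$ transform functorially under it. Since $A=B^{-1}(BA)$ with $B^{-1}\in\GL_{s+1}(\Q)$ as well, it is enough to prove $w^{p}_j(BA)\ge w^{p}_j(A)$; the reverse inequality follows by applying this with $B^{-1}$ in place of $B$. (Alternatively one may first factor $B$ into elementary matrices --- transposition of two rows, scaling of a row by a nonzero rational, addition of a rational multiple of one row to another --- and treat the factors one at a time; the mechanism is the same, the denominator bookkeeping being cleanest for elementary $B$.)

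For the main step, fix $g:=\diag(B,I_{n-s})\in\GL_{n+1}(\Q)$, which acts as the identity on $W_2:=\Span_\Q(\be_{s+1}^p,\dots,\be_n^p)$, the coordinate subspace onto which $\pi_{\bullet}$ projects. Given $v<w^{p}_j(A)$ and a witness $\bw\in\mathcal{S}_{n+1,j}$, that is, with $\Vert\pi_{\bullet}(\bw)\Vert_p\Vert\bw\Vert_\infty$ arbitrarily large and $\Vert R_A\bc(\bw)\Vert_p\Vert\bw\Vert_\infty<(\Vert\pi_{\bullet}(\bw)\Vert_p\Vert\bw\Vert_\infty)^{-\frac{v+1-j}{j}}$, let $\bw'\in\mathcal{S}_{n+1,j}$ be the primitive generator in $\bigwedge^{j}\mathcal{D}^{n+1}$ of the rank-$j$ submodule $\Q\,g(\Gamma_{\bw})\cap\mathcal{D}^{n+1}$, where $\Gamma_{\bw}\subset\mathcal{D}^{n+1}$ is the submodule of $\bw$. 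Then $\bw'=c_{\bw}\cdot\bigwedge^{j}(g)(\bw)$ for some $c_{\bw}\in\Q^{\times}$ which can be chosen so that $|c_{\bw}|_p\,|c_{\bw}|_\infty$ is bounded above and below in terms of $g$ alone (a routine content estimate, using that $\bw$ is primitive and $g$ has fixed rational entries). I would then verify three comparisons, with all implied constants depending only on $A,B$: (i) $\pi_{\bullet}(\bw')=c_{\bw}\,\pi_{\bullet}(\bw)$, because $g$ fixes $W_2$ pointwise and hence $\bigwedge^{j}(g)$ fixes $\bigwedge^{j}W_2$ pointwise while preserving its coordinate complement; (ii) $\Vert\bw'\Vert_\infty\asymp|c_{\bw}|_\infty\Vert\bw\Vert_\infty$, since $\bigwedge^{j}(g)$ is a fixed invertible operator over $\R$; and (iii) $\Vert R_{BA}\bc(\bw')\Vert_p\asymp|c_{\bw}|_p\,\Vert R_A\bc(\bw)\Vert_p$. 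Granting these, $\Vert\pi_{\bullet}(\bw')\Vert_p\Vert\bw'\Vert_\infty\asymp\Vert\pi_{\bullet}(\bw)\Vert_p\Vert\bw\Vert_\infty$ and $\Vert R_{BA}\bc(\bw')\Vert_p\Vert\bw'\Vert_\infty\asymp\Vert R_A\bc(\bw)\Vert_p\Vert\bw\Vert_\infty$, so $\bw'$ is a witness for $BA$ for every $v'<v$ (the fixed comparison constants get absorbed into the exponent as $\Vert\pi_{\bullet}(\bw')\Vert_p\Vert\bw'\Vert_\infty\to\infty$ along the relevant sequence). Hence $w^{p}_j(BA)\ge v$, and letting $v\nearrow w^{p}_j(A)$ completes the reduction.

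For (iii) the preceding lemma is essential, because $\bc$ merely reshapes Pl\"ucker coordinates and is not a functor on $\bigwedge^{j}$, so $g$ cannot be pushed through it directly. Instead, by that lemma $\Vert R_{BA}\bc(\bw')\Vert_p$ is comparable to $\max_{i\le s}\max_{\#J=j-1}\bigl|\bigl\langle(\be_i^p+\ba_i')\wedge\be_J^p,\bw'\bigr\rangle\bigr|_p$, with $\ba_i'$ the $i$-th row of $BA$ --- a genuinely multilinear quantity on which the change of variables is transparent, $\bigl\langle(\be_i^p+\ba_i')\wedge\be_J^p,\bw'\bigr\rangle=c_{\bw}\bigl\langle(g^{T}(\be_i^p+\ba_i'))\wedge(g^{T}\be_J^p),\bw\bigr\rangle$. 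The key algebraic fact is the identity $g^{T}(\be_i^p+\ba_i')=\sum_k B_{ik}(\be_k^p+\ba_k)$, which holds exactly because $\ba_i'=\sum_l B_{il}\ba_l$ and $g^{T}$ fixes $\be_m^p$ for $m>s$; it shows $g^{T}$ carries the defining vectors of $R_{BA}$ into the span of those of $R_A$, while $g^{T}\be_J^p$ is a bounded combination of standard wedges $\be_K^p$. Expanding and applying the other inequality of the preceding lemma, $\max_{k\le s}\max_{\#K=j-1}|\langle(\be_k^p+\ba_k)\wedge\be_K^p,\bw\rangle|_p\ll\Vert R_A\bc(\bw)\Vert_p$, gives $\Vert R_{BA}\bc(\bw')\Vert_p\ll|c_{\bw}|_p\Vert R_A\bc(\bw)\Vert_p$; the reverse bound follows identically, now using $g^{-T}(\be_k^p+\ba_k)=\sum_i(B^{-1})_{ki}(\be_i^p+\ba_i')$.

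I expect step (iii) to be the main obstacle: reconciling the reshaping map $\bc$ with the coordinate change $\bigwedge^{j}(g)$. This is exactly what the preceding lemma resolves --- it trades $\Vert R_A\bc(\cdot)\Vert_p$ for a multilinear quantity on which $\bigwedge^{j}(g)$ acts visibly --- and the only remaining computational input is the identity $g^{T}(\be_i^p+\ba_i')=\sum_k B_{ik}(\be_k^p+\ba_k)$, which pins down $g=\diag(B,I_{n-s})$ as the correct change of coordinates. The rest --- the content/primitivity estimate for $c_{\bw}$ over $\mathcal{D}=\Z[1/p]$ and the elementary norm comparisons in (i) and (ii) --- is routine, and the argument runs parallel to Kleinbock's treatment of the real case.
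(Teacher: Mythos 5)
Your argument is correct and is essentially the proof the paper intends: the paper omits it, citing a verbatim repetition of Kleinbock's real-case argument, which is exactly your scheme --- trade $\Vert R_{A}\bc(\cdot)\Vert_p$ for the multilinear quantities via the preceding lemma, act on $\bw$ by $\bigwedge^{j}\big(\diag(B,I_{n-s})\big)$ (which fixes $\pi_{\bullet}$, changes $\Vert\cdot\Vert_\infty$ and the pairings only by bounded factors, and satisfies $g^{T}(\be_i^p+\ba_i')=\sum_k B_{ik}(\be_k^p+\ba_k)$), and restore primitivity over $\Z[1/p]$ at the bounded cost $|c_{\bw}|_p|c_{\bw}|_\infty$. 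The only content beyond Kleinbock's original argument is that $\Z[1/p]$-content bookkeeping, which you handle correctly.
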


Now we will wrap up this section by stating the last lemma and a theorem similar to Theorem \ref{columns}.
for rational multiples of rows.
\begin{lemma}
	 Suppose that $A$ has more than one row, and 
	let $A'$ be the matrix obtained from $A$ by removing one of its
	rows. Then  $w_j^{p}(A') \ge w_j^{p}(A)$ for all $j$.
	If in addition the removed row  is a rational linear combination
	of the remaining rows, then  $w_j^{p}(A') = w_j^{p}(A)$ for all $j$.
	\end{lemma}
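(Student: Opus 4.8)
The plan is to carry over Kleinbock's proof of the corresponding real statement in \cite{Kleinbock-exponent} to the $S$-arithmetic setting $\mathcal R=\Q_p\times\R$, $\mathcal D=\Z[1/p]$. By the previous lemma, $w_j^p$ is unchanged under rational row operations, in particular row transpositions, so I may assume the removed row is the last one, $\ba_s$; then I identify the ambient space of $A'$ with the coordinate hyperplane $\langle\be_k:k\neq s\rangle$ of $\Q_p^{n+1}$. Under this identification $R_{A'}$ consists of exactly the $\be_s$-free rows $\be_i+\ba_i$ ($0\le i\le s-1$) of $R_A$, the projection $\pi_\bullet$ is literally the same on both sides, and deleting (resp.\ inserting) the $\be_s$-coordinate gives maps $\bigwedge^j\Q_p^{n+1}\to\bigwedge^j\langle\be_k:k\neq s\rangle$ and back. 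Since the $p$-adic norm of a nonzero integral multivector is a power of $p$, every $\bw$ below may be normalized to have $\|\bw\|_p=1$; then Lemma~\ref{pidot} forces $\|\pi_\bullet(\bw)\|_p\gg_A1$ as soon as $\|R_A\bc(\bw)\|_p\|\bw\|_\infty\le1$ and $\|\bw\|_\infty$ is large, which is what lets me compare the two product-norms $\|\pi_\bullet(\cdot)\|_p\|\cdot\|_\infty$ and $\|R_A\bc(\cdot)\|_p\|\cdot\|_\infty$ occurring in Definition~\ref{def:higher}.

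For $w_j^p(A')\ge w_j^p(A)$: fix $v<w_j^p(A)$ and witnesses $\bw^{(k)}\in\mathcal S_{n+1,j}$, normalized, with $Z_k:=\|\pi_\bullet(\bw^{(k)})\|_p\|\bw^{(k)}\|_\infty\to\infty$, so $\|R_A\bc(\bw^{(k)})\|_p\|\bw^{(k)}\|_\infty<Z_k^{-(v+1-j)/j}\to0$. Let $\bw'_{(k)}$ be the primitivization of the multivector obtained from $\bw^{(k)}$ by deleting its $\be_s$-components. Then $\|R_{A'}\bc(\bw'_{(k)})\|_p\le\|R_A\bc(\bw^{(k)})\|_p$ (its components are, $p$-adically, a subfamily of the components of $R_A\bc(\bw^{(k)})$), $\|\pi_\bullet(\bw'_{(k)})\|_p=\|\pi_\bullet(\bw^{(k)})\|_p$, and $\|\bw'_{(k)}\|_\infty\le\|\bw^{(k)}\|_\infty$; hence the defining inequality of Definition~\ref{def:higher} at level $v$ holds for $\bw'_{(k)}$, and the only point left is that $\|\pi_\bullet(\bw'_{(k)})\|_p\|\bw'_{(k)}\|_\infty$ be unbounded. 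If $\|\bw'_{(k)}\|_\infty$ stays bounded, then—being an integer multivector (its $p$-adic norm is $\le1$)—$\bw'_{(k)}$ takes finitely many values, so along a subsequence it is a constant $\bw'_\ast$; then $\|R_{A'}\bc(\bw'_\ast)\|_p\le\|R_A\bc(\bw^{(k)})\|_p\to0$ forces $R_{A'}\bc(\bw'_\ast)=0$, while $\pi_\bullet(\bw'_\ast)\neq0$ (otherwise $Z_k\equiv0$), and multiplying $\bw'_\ast$ by integers coprime to $p$—the mechanism behind the second hypothesis of Lemma~\ref{connection}—produces arbitrarily large $\|\pi_\bullet(\cdot)\|_p\|\cdot\|_\infty$ at zero error, whence $w_j^p(A')=\infty$. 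Otherwise $\|\bw'_{(k)}\|_\infty\to\infty$ along a subsequence, and the normalization together with Lemma~\ref{pidot} give $\|\pi_\bullet(\bw^{(k)})\|_p\asymp_A1$, so $\|\pi_\bullet(\bw'_{(k)})\|_p\|\bw'_{(k)}\|_\infty\asymp_A\|\bw'_{(k)}\|_\infty\to\infty$ and the $\bw'_{(k)}$ witness $v$ for $A'$. In every case $w_j^p(A')\ge v$; let $v\uparrow w_j^p(A)$.

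For the reverse inequality when $\ba_s=\sum_{i<s}\lambda_i\ba_i$ with $\lambda_i\in\Q$, I proceed in the other direction, \emph{lifting} a witness $\bw'_{(k)}$ for $A'$ of level $v+\ep$ (for a fixed small $\ep>0$ with $v+\ep<w_j^p(A')$) to a witness $\bw_{(k)}$ for $A$. Writing $\be_s+\ba_s=(\be_s-\sum_{i<s}\lambda_i\be_i)+\sum_{i<s}\lambda_i(\be_i+\ba_i)$, I choose the $\be_s$-components of $\bw_{(k)}$ so that in every new ``virtual row $s$'' component $\langle(\be_s+\ba_s)\wedge\be_J,\bw_{(k)}\rangle$ the ``$(\be_s-\sum\lambda_i\be_i)$'' part is cancelled; after rewriting $\langle\ba_i\wedge\be_J,\cdot\rangle=\langle(\be_i+\ba_i)\wedge\be_J,\cdot\rangle-(\text{a coordinate})$, this determines those $\be_s$-components as explicit $\Q$-combinations of coordinates of $\bw'_{(k)}$ with coefficients built from the $\lambda_i$, and clearing the (fixed) prime-to-$p$ denominators by an integer $N$ coprime to $p$ and primitivizing puts $\bw_{(k)}$ in $\mathcal S_{n+1,j}$. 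One then checks that \emph{all} components of $R_A\bc(\bw_{(k)})$—the $\be_s$-free rows, the virtual row $s$, and the mixed ones attached to multi-indices through $s$—unwind, via the same substitution, into $\Z_p$-linear combinations of components of $R_{A'}\bc(\bw'_{(k)})$, so that by ultrametricity $\|R_A\bc(\bw_{(k)})\|_p\ll_A\|R_{A'}\bc(\bw'_{(k)})\|_p$, while $\pi_\bullet$ and $\|\cdot\|_\infty$ change only by factors depending on $A$; the resulting bounded distortion of both product-norms is absorbed by the surplus $(Z^{A'})^{\ep/j}$ since $Z^{A'}\to\infty$, giving that $\bw_{(k)}$ witnesses $v$ for $A$, hence $w_j^p(A)\ge v$. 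Letting $v\uparrow w_j^p(A')$ and $\ep\downarrow0$ and combining with the first part gives $w_j^p(A')=w_j^p(A)$. The step I expect to be the main obstacle is exactly this last verification—that the virtual row $s$ \emph{and especially} the mixed wedge components collapse to $p$-integral combinations of the old constraints; this is the $p$-adic incarnation of the linear-algebra lemma of \cite{Kleinbock-exponent}, the essential point being that once the prime-to-$p$ denominators of the $\lambda_i$ are cleared every auxiliary coefficient lies in $\Z_p$, so the ultrametric inequality gives the bound with no \emph{growing} constant, and the only (bounded) losses—archimedean norm comparisons and primitivization content—are paid for by the $\ep$-slack.
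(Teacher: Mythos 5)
The paper itself gives no proof of this lemma --- it states that the proofs are verbatim repetitions of Kleinbock's arguments in \cite{Kleinbock-exponent} --- so the comparison is with that argument. Your first part is essentially it and is fine: deleting the $\be_s$-components is $\bigwedge^j$ of the coordinate projection, hence preserves decomposability; the components of $R_{A'}\bc(\bw')$ occur among those of $R_A\bc(\bw)$ while $\pi_\bullet$ is unchanged and $\Vert\bw'\Vert_\infty\le\Vert\bw\Vert_\infty$; and your treatment of the degenerate case (after the $\Vert\cdot\Vert_p$-normalization, $\bw'$ eventually constant forces $R_{A'}\bc(\bw'_\ast)=0$, and prime-to-$p$ integer multiples give $w^p_j(A')=\infty$) is sound and consistent with how the paper itself exploits such multiples.

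For the equality part you take a different, more computational route than the one the paper intends, and the step you yourself flag is, as written, a genuine gap: it is \emph{not} true that every component of $R_A\bc(\bw_{(k)})$ unwinds into rational combinations of components of $R_{A'}\bc(\bw'_{(k)})$. Dualizing your choice of $\be_s$-components (equivalently, $\bw_{(k)}=\bigwedge^jT\,\bw'_{(k)}$ with $T\be_i=\be_i+\lambda_i\be_s$ for $i<s$ and $T\be_k=\be_k$ for $k>s$), a mixed component with $s\in J$ becomes $\sum_{i<s}\lambda_i\bigl\langle(\be_{i_0}+\ba_{i_0})\wedge\be_i\wedge\be_{J\setminus\{s\}},\,\bw'_{(k)}\bigr\rangle$, and the multi-index $\{i\}\cup(J\setminus\{s\})$ may contain $0$ (or meet $i_0$); such terms are not components of $R_{A'}\bc(\bw'_{(k)})$, whose index sets avoid $0$, so ultrametricity alone does not yield $\Vert R_A\bc(\bw_{(k)})\Vert_p\ll_A\Vert R_{A'}\bc(\bw'_{(k)})\Vert_p$. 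To close this you must invoke the first inequality of the unnumbered comparison lemma stated just before the row-operations lemma (components indexed by arbitrary $J\subset\{0,\dots,n\}$ are $\ll\Vert R_{A'}\bc(\cdot)\Vert_p$), applied to $A'$. Simpler, and this is the route of \cite{Kleinbock-exponent} that the paper has in mind: you already allow yourself rational row operations, so subtract $\sum_{i<s}\lambda_i\cdot(\text{row }i)$ from the removed row and assume $\ba_s=0$; then the lift is the plain inclusion of $\bw'$ with zero $\be_s$-components, every new component of $R_A\bc(\bw)$ is either zero or literally a component of $R_{A'}\bc(\bw')$, $\pi_\bullet$ and both norms are unchanged, primitivity is preserved, and the denominator-clearing, primitivization-content and $\ep$-slack bookkeeping in your argument become unnecessary. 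Also note a small inaccuracy: clearing only the prime-to-$p$ denominators of the $\lambda_i$ puts the coefficients in $\Z[1/p]$, not in $\Z_p$; their $p$-norms are merely bounded in terms of $A$, which still suffices but is not what you wrote.
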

As a consequence we have the following theorem.
\begin{theorem}
	Let $\mathcal{L}$ be an affine subspace parametrized by $R_A$ as in (\ref{A}). If all the rows are rational multiples of one row then one has $$
w_{p}(\mathcal{L})= \max(n, w_{p}(A)) \text{ and } w(\mathcal{L})=\max(n+1, w(A)).
$$
\end{theorem}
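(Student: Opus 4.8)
The plan is to reduce the statement to the hyperplane case, Theorem~\ref{hyperplane}, via the reduction lemmas on higher exponents $w_j^p(A)$ stated immediately above. First I would recall that by Corollary~\ref{max w_j}, for $\mathcal{L}$ an $s$-dimensional affine subspace parametrized by $R_A$ as in (\ref{A}), one has $w_p(\mathcal{L}) = \max\bigl(n,\, w_1^p(A),\dots, w_{n-s}^p(A)\bigr)$, and by Lemma~\ref{w=w_1} that $w_1^p(A) = w_p(A)$. So the task is to show that under the hypothesis that every row of $A$ is a rational multiple of one fixed row, all the higher exponents $w_j^p(A)$ for $j \ge 2$ are dominated by $\max(n, w_p(A))$, or more precisely do not contribute anything new beyond $\max(n, w_p(A))$.

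The key step is the last stated lemma: if $A$ has more than one row and $A'$ is obtained by deleting a row which is a rational linear combination of the remaining rows, then $w_j^p(A') = w_j^p(A)$ for all $j$. Under our hypothesis, all rows are rational multiples of a single fixed row $\ba_{i_0}$, so every row other than that one is a rational linear combination (indeed a rational multiple) of the remaining rows — in particular of $\ba_{i_0}$. Thus I would iteratively delete rows one at a time, each time invoking this lemma, until only the single row $\ba_{i_0}$ remains. This produces a matrix $A''$ with exactly one row, and $w_j^p(A'') = w_j^p(A)$ for all $j$. But a one-row matrix $A''$ parametrizes a hyperplane ($s'' = n-1$) — well, one must be careful: after deleting rows the number of columns $n-s$ is unchanged but the subspace being parametrized changes; what matters here is purely the matrix-level identity $w_j^p(A'') = w_j^p(A)$. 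For a matrix with a single row, $\mathcal{S}_{n+1,j}$ with $j \ge 2$ forces the relevant wedge expressions to degenerate, so $w_j^p(A'') $ for $j\ge 2$ cannot exceed what a hyperplane analysis allows; concretely, applying Corollary~\ref{max w_j} and Theorem~\ref{hyperplane} to the hyperplane parametrized by the single-row matrix gives $\max\bigl(n, w_j^p(A'')_{j}\bigr) = \max(n, w_p(A''))$, hence $\max_{j\ge 1} w_j^p(A) = \max_j w_j^p(A'') \le \max(n, w_p(A''))$, and since $w_1^p(A) = w_p(A) = w_1^p(A'') = w_p(A'')$ the reverse bound is immediate. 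Plugging back into Corollary~\ref{max w_j} yields $w_p(\mathcal{L}) = \max(n, w_p(A))$. The corresponding $\Z$-statement $w(\mathcal{L}) = \max(n+1, w(A))$ then follows from Proposition~\ref{same} (and its matrix analogue $w_p(A) = w(A)+1$), exactly as in the proof of Theorem~\ref{columns}.

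The main obstacle I anticipate is the bookkeeping in the row-deletion step, specifically making sure that the hypothesis of the deletion lemma (the removed row is a rational linear combination of the \emph{remaining} rows) stays satisfied throughout the induction. When all rows are rational multiples of a common row $\ba_{i_0}$, any row $\ba_i$ with $i \neq i_0$ equals $\lambda_i \ba_{i_0}$ for some $\lambda_i \in \Q$; as long as $\ba_{i_0}$ has not yet been removed, $\ba_i$ is a rational multiple of it and hence a rational linear combination of the remaining rows — so I would simply order the deletions to remove $\ba_{i_0}$ last. A secondary subtlety is the degenerate case where $\ba_{i_0}$ itself is zero (all rows zero), in which case $A = 0$, $w_p(A) = \infty$ trivially? — no, with $A=0$ one should check directly that $w_p(\mathcal{L}) = n$ and $w_p(A)$ should be interpreted as $n$ or handled as a boundary case; I would dispose of this at the outset. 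The remaining verifications — that a single-row matrix forces $w_j^p = $ nothing beyond the $j=1$ contribution, and the translation to the $\Z$-exponent — are routine given Lemma~\ref{w=w_1}, Corollary~\ref{max w_j}, and Proposition~\ref{same}.
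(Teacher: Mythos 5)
Your reduction scheme---Corollary \ref{max w_j} together with Lemma \ref{w=w_1}, iterated use of the row--removal lemma keeping the distinguished row $\ba_{i_0}$ until the end, the lower bound via $w^{p}_1(A)=w_{p}(A)$, and the passage to the $\Z$-exponent through Proposition \ref{same}---is exactly the intended argument (the paper defers to Kleinbock's proof, which follows this outline). The genuine gap is in the step where you control the higher exponents of the one-row matrix $A''$. A matrix with a single row does \emph{not} parametrize a hyperplane: in the normalization (\ref{A}) a hyperplane is the case of a single \emph{column} ($s=n-1$, $A\in M_{n,1}$), whereas $A''\in M_{1,n-s}$ has $s''=0$ and parametrizes the \emph{point} $\{\ba_{i_0}\}\subset\Q_p^{\,n-s}$. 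Consequently Theorem \ref{hyperplane} cannot be invoked for $A''$, and your claim that for a one-row matrix the wedge terms with $j\ge 2$ ``degenerate'' is false: the quantities $w^{p}_j(A'')$ for $2\le j\le n-s$ are the genuinely nontrivial higher exponents of the vector $\ba_{i_0}$ (and they are computed with $\bw\in\mathcal{S}_{n-s+1,j}$, not $\mathcal{S}_{n+1,j}$). The correct endgame, and the one the paper intends, is to apply Corollary \ref{max w_j} in the case $s=0$ to the zero-dimensional subspace $\{\ba_{i_0}\}$ of $\Q_p^{\,n-s}$; this is legitimate because a constant map is good and nonplanar in a point, so Corollary \ref{w(L)} applies. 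One gets $\max\bigl(n-s,\max_j w^{p}_j(A'')\bigr)=w_{p}(\ba_{i_0})=w_{p}(A'')$, the last equality being immediate from the definitions for a one-row matrix. Hence $w^{p}_j(A)=w^{p}_j(A'')\le w_{p}(A'')=w^{p}_1(A'')=w^{p}_1(A)=w_{p}(A)$ for every $j$, and Corollary \ref{max w_j} gives $w_{p}(\mathcal{L})=\max(n,w_{p}(A))$; since $n\ge n-s$, the drop in ambient dimension causes no loss. This is the inequality your ``hyperplane analysis'' was supposed to deliver, so as written the key bound on $w^{p}_j(A)$, $j\ge 2$, is unproved.

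The rest of your plan is sound, with one small correction: the degenerate case $\ba_{i_0}=0$ (i.e.\ $A=0$) needs no special treatment, and your suggested value $w_{p}(\mathcal{L})=n$ there is wrong. In that case every $\by\in\mathcal{L}$ has its last $n-s$ coordinates zero, so taking $q_0=\dots=q_s=0$ and $q_n$ a large integer prime to $p$ gives $|q_0+\bq\cdot\by|_p=0$ with $\Vert\bq\Vert_p\Vert\tilde\bq\Vert_\infty$ unbounded, whence $w_{p}(\by)=\infty$ for all $\by\in\mathcal{L}$; likewise $w_{p}(A)=\infty$, so the asserted identity holds with both sides infinite, and the row-removal lemma applies verbatim (a zero row is trivially a rational combination of the remaining rows). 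Your ordering of the deletions, and the translation of the $\Z[1/p]$-statement into the $\Z$-statement via Proposition \ref{same} and its matrix analogue, are fine.
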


The lemmata and Theorem stated above are proved by a verbatim repetition of the arguments of Kleinbock \cite{Kleinbock-exponent}, and so we omit the proofs.

%\subsection{Khintchine type theorems}
%In \cite{BKM, BBKM}, the convergence and divergence Khintchine theorems were proved for nondegenerate manifolds. In \cite{G1, G-thesis, G-div, G-mult, G-monat}, the second named author proved various Khintchine type theorems for convergence and divergence for affine subspaces and their nondegenerate submanifolds, while in \cite{BGGV}, the inhomogeneous case was dealt with. It would be very interesting to study $p$-adic Khintchine type theorems for affine subspaces

\end{document}